\title{Geometric Class Field Theory}   
\author{Hanming Liu}             
\begin{document}

\baselineskip=18pt plus1pt

\setcounter{secnumdepth}{3}
\setcounter{tocdepth}{3}

\setlength\parindent{20pt}

\theoremstyle{plain}
\newtheorem{theorem1}{Theorem}

\theoremstyle{plain}
\newtheorem{theorem}{Theorem}[section] 
\newtheorem{lemma}[theorem]{Lemma}
\newtheorem{corollary}[theorem]{Corollary}
\newtheorem{proposition}[theorem]{Proposition}

\theoremstyle{definition}
\newtheorem{definition}[theorem]{Definition} 
\newtheorem{example}[theorem]{Example} 
\newtheorem{remark}[theorem]{Remark}

\crefname{lemma}{lemma}{lemmas}

\newcommand{\gj}{J_{\mathbf{m}}}
\newcommand{\md}{\mathbf{m}}
\newcommand{\sheaf}{\mathcal{O}}
\newcommand{\lsheaf}{\mathcal{L}}
\newcommand*{\defeq}{\mathrel{\vcenter{\baselineskip0.5ex \lineskiplimit0pt\hbox{\scriptsize.}\hbox{\scriptsize.}}}=}
\newcommand{\Tr}{\textrm{Tr}}
\newcommand{\ddeg}{\textrm{deg}}
\newcommand{\g}{\mathfrak{g}}
\newcommand{\spec}{\textrm{Spec}}
\newcommand{\m}{\mathfrak{m}}

\maketitle                  
\include{dedication}        
\include{acknowlegements}   
\include{abstract}          

\begin{romanpages}          
\tableofcontents            
\end{romanpages}            


\chapter{Introduction}

In this expository article we present Rosenlicht's work on geometric class field theory, which classifies abelian coverings of smooth, projective, geometrically connected curves over perfect fields. Unless otherwise specified, all proofs in this expository article are either taken directly from [1], or are adaptations of proofs in [1].

Most of the author's effort went into translating statements and proofs regarding generalised Jacobians from Weil's language of algebraic geometry into the language of schemes (in analogy to [4], where Milne translated statements and proof regarding Jacobian varieties from Weil's language into the language of schemes), whereas the rest of the story that don't use Weil's language in some essential way are mostly included here for the sake of completeness, and some proofs include added details for the sake of the author's understanding.

\section{Summary of the Story}

Let \(k\) be a perfect field, and let \(X\) be a smooth, projective, geometrically connected curve over \(k\). Let $\md$ be a modulus (which just means an effective divisor) on $X$. Write \(\mathbf{m}=\sum n_{P}P\) and $S$ for the support of $\md$ (that is, the set of points where $n_{P}\neq0$), and \(v_{P}\) for the valuation of a rational function at the point \(P\). For a rational function \(\phi\) on \(X\), we say \(\phi \equiv 1 \)  mod \(\mathbf{m}\) if \(v_{P}(1-\phi) \geq n_{P}\) for each \(P\).

Chapter 2 is devoted to the following theorem:

\begin{theorem1}
    Let \(f: X\dashrightarrow G\) be a rational map from \(X\) to an algebraic group \(G\) which is regular away from \(S\). This naturally extends to a map from divisors prime to \(S\) to \(G\). There exists a modulus \(\md\) with support \(S\), such that \(f(D)=0\) for every divisor \(D=(\phi)\) with \(\phi \equiv 1\) mod \(\mathbf{m}\).
\end{theorem1}

Chapter 3 is devoted to the following theorem:

\begin{theorem1}\label{mainthm2}
    There exists an algebraic group \(J_{\mathbf{m}}\) and a principal homogeneous space $\gj^{(1)}$, depending on \(X\) and \(\mathbf{m}\), and a rational map \(\varphi_{\md}:X\dashrightarrow \gj\) regular away from \(S\), satisfying the following universal property: for every rational map \(f:X\dashrightarrow G\) to a principal homogeneous space of a connected commutative algebraic group regular away from \(S\), there exists some modulus $\md$ with support $S$ such that there exists a unique morphism \(\theta:\gj^{(1)}\rightarrow H\) such that \(f=\theta\circ \varphi_{\md}\).
\end{theorem1}

A map \(f:H'\rightarrow H\) between principal homogeneous spaces of commutative algebraic groups is called an isogeny if it is surjective, and has finite kernel. A isogeny is called an abelian isogeny, if in addition to being an isogeny, it is an abelian covering.

\begin{theorem1}
    Every abelian covering of a variety is the pullback of an abelian isogeny.
\end{theorem1}

Combining the above 3 theorems, we get:

\begin{theorem1}
    Every abelian covering of \(X\) (irreducible, non-singular, projective curve) is the pullback of an abelian isogeny \(\theta :H\rightarrow\gj^{(1)}\).
\end{theorem1}

We will also prove that for every covering, there is a smallest $\md$ we can choose in Theorem 4, and in this case, the isogeny is unique. This classifies all abelian coverings of $X$.

\section{Conventions}

We will work with schemes only (unlike [1], which works with varieties over universal domains). Points will always mean closed points.

Projective curves over a field $k$ don't have to be embedded in some $\mathbf{P}_{k}^{n}$, they can be embedded in some projective space over a finite extension of $k$.

Varieties (and in particular, curves) are not assumed to be irreducible, although we will (explicitly) make this assumption most of the time.

\chapter{Maps From Curves to Commutative Groups}

$k$ denotes a perfect field. $X$ denotes a smooth, projective, geometrically connected curve. $\md$ denotes a modulus (effective divisor) on $X$, whose support is denoted $S$. $G$ denotes a connected commutative algebraic group.

Write \(\mathbf{m}=\sum n_{P}P\), and \(v_{P}\) for the valuation of a rational function at the point \(P\). For a rational function \(\phi\) on \(X\) and $a\in k$, we say \(\phi \equiv a \)  mod \(\mathbf{m}\) if \(v_{P}(a-\phi) \geq n_{P}\) for each \(P\) (if the inequality holds only for a point $P\in S$, then we say $\phi\equiv1$ mod $\md$ at $P$). Note that for any such $\phi$, $(\phi)$ is a divisor prime to $S$.

Let $f:X-S\rightarrow G$ be a (regular) map (which we will also treat as a rational map whose domain of definition is contained in $X-S$). $f$ extends to a map from the set of divisors on $X$ prime to $S$ to $G$.

\begin{definition}
    We say that $f$ admits the modulus $\md$ (or that $\md$ is a modulus of $f$) iff $f((g))=0$ for every function $g\in k(X)$ such that $g\equiv 1$ mod $\md$.
\end{definition}

The main goal of this chapter is to prove theorem 1, which we restate here:

\begin{theorem}\label{theorem1}
    Let $S$ be a finite set on a smooth, projective, geometrically connected curve $X$. Let $G$ be a connected commutative algebraic group. Every rational map $X\dashrightarrow G$ regular on $X-S$ admits a modulus $\md$ whose support is $S$.
\end{theorem}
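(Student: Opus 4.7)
The strategy is a d\'evissage on the structure of $G$. The first step is to establish three elementary stability properties of the set of moduli of a fixed map: it is closed under componentwise maxima, under adding effective divisors supported in $S$, and (by additivity of $G$) behaves additively in $f$. By Chevalley's theorem, every connected commutative algebraic group $G$ over a perfect field $k$ fits into an exact sequence $0 \to L \to G \to A \to 0$ with $A$ an abelian variety and $L$ a connected commutative linear group; the latter in turn admits a composition series whose successive quotients are isomorphic (after base change to $\bar k$, with Galois descent taking care of the rationality since $k$ is perfect) to $\mathbf{G}_{a}$ or $\mathbf{G}_{m}$. A short-exact-sequence lemma for moduli then reduces the theorem to the three prototypical cases $G \in \{A, \mathbf{G}_{a}, \mathbf{G}_{m}\}$.

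The case $G = A$ is immediate: properness of $A$ forces $f$ to extend to a morphism on all of $X$, and the universal property of the Jacobian (Abel's theorem) gives $f((g)) = 0$ for every $g \in k(X)^{*}$, so any modulus with support $S$ suffices. For $G = \mathbf{G}_{a}$: $f$ is a rational function with pole of order $m_{P}$ at $P \in S$. Apply the residue theorem to $\omega = f \, dg/g$. At $Q \notin S$ the form has residue $v_{Q}(g) f(Q)$, contributing $f((g))$ to the global sum; at $P \in S$, writing $g = 1 + h$ with $v_{P}(h) \geq n_{P}$ gives $v_{P}(dg/g) = v_{P}(dh) \geq n_{P} - 1$, whence $v_{P}(\omega) \geq n_{P} - 1 - m_{P}$, which is nonnegative as soon as $n_{P} \geq m_{P} + 1$. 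The residue theorem then forces $f((g)) = 0$. For $G = \mathbf{G}_{m}$: $f$ is a unit on $X - S$, and the identity $f((g)) = 1$ is precisely Weil reciprocity for the pair $(f, g)$ via the tame symbol; choosing $n_{P} \geq 1$ ensures $g(P) = 1$ at each $P \in S$, which trivialises the tame symbol at every $P \in S$, and Weil reciprocity gives $\prod_{Q \notin S} f(Q)^{v_{Q}(g)} = 1$.

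The main obstacle, beyond importing the classical inputs (the Chevalley structure theorem, Abel's theorem for the Jacobian, the residue theorem, and Weil reciprocity), is the d\'evissage itself: showing that moduli lift cleanly along short exact sequences of commutative algebraic groups requires a careful local-at-$S$ analysis, since from a modulus of the quotient $X \dashrightarrow G/G'$ one must produce a modulus for the total map using only the stability properties. The Galois descent needed to reduce from an arbitrary linear $L$ to its $\mathbf{G}_{a}$ and $\mathbf{G}_{m}$ filtration pieces is where perfectness of $k$ enters essentially, to guarantee a filtration by $k$-rational closed subgroups rather than merely $\bar k$-rational ones.
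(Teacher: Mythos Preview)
Your three base cases are handled correctly and match the classical arguments: the abelian-variety case via properness and Abel's theorem, the $\mathbf{G}_a$ case via the residue theorem applied to $f\,dg/g$, and the $\mathbf{G}_m$ case via the tame symbol and Weil reciprocity.

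The gap is the d\'evissage. You flag the short-exact-sequence lemma as the main obstacle but give no argument for it, and it does not follow formally from knowing the theorem for $G'$ and for $G/G'$. Given $f:X-S\to G$, the projection $\bar f:X-S\to G/G'$ has a modulus by hypothesis, but $f$ does not factor through $G'$, so there is no map to which the hypothesis on $G'$ applies. One can try the trace criterion: if $\md$ dominates a modulus for $\bar f$ then $\Tr_g\bar f:\mathbf{P}^1-\{0\}\to G/G'$ is constant, so $\Tr_g f$ lands in a single $G'$-coset; but the theorem for $G'$ only says that this coset map admits \emph{some} modulus on $\mathbf{P}^1$, not that it is constant, and that modulus may depend on $g$. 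This is not a technicality: when $G'=\mathbf{G}_a$ there are many non-constant regular maps $\mathbf{A}^1\to\mathbf{G}_a$, so the coset map need not be constant and no uniform $\md$ drops out.

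The paper's d\'evissage (in characteristic $p$, the only case it treats) is organised differently and avoids any general short-exact-sequence lemma. It embeds $G$ into the product $G/R_1\times G/R_2$ coming from the splitting of the linear radical into its multiplicative and unipotent parts, reducing to extensions of an abelian variety $A$ by $\mathbf{G}_m$ or by a unipotent $U$. For an extension by $\mathbf{G}_m$ the trace-criterion argument above does work, precisely because every regular map $\mathbf{P}^1-\{0\}\to\mathbf{G}_m$ is constant. For an extension by $U$ the paper uses a further trick: the $p^r$-th power map kills $U$, producing a homomorphism $G\to A\times(G/p^rG)$ with finite kernel, and a modulus for the target is automatically one for the source. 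This separates $A$ from the unipotent part, and the unipotent case is then handled not by reducing to $\mathbf{G}_a$ but by a direct pole-order estimate on the matrix entries of $f$ inside a strictly upper-triangular group, which yields a uniform bound on the required $n_P$. Your residue computation for $\mathbf{G}_a$ is essentially the key ingredient of Serre's characteristic-zero proof, which the paper does not reproduce.
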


We will only prove it in the case of positive characteristic. The proof of the case of characteristic zero is quite different, and can be found in [1], chapter 3, no 6.

\section{Local Symbols}

In this section, we define the notion of a local symbol, prove some elementary properties of local symbols, and give a criterion for admitting a modulus in terms of traces (\Cref{crit}).

For this section and the next, $k$ will be assumed to be algebraically closed.

\begin{definition}
    Let $\md$ be a modulus on $X$. Let $f:X-S\rightarrow G$. We call a ``local symbol'' the assignment, for each $P\in X$ and every $g\in k(X)^{*}$, of an element of $G$, written $(f,g)_{P}$, satisfying:
    \begin{enumerate}
        \item[(i)] $(f,gg')_{P}=(f,g)_{P}+(f,g')_{P}$.
        \item[(ii)] $(f,g)_{P}=0$ if $P\in S$ and if $g\equiv1$ mod $\md$ at $P$.
        \item[(iii)] $(f,g)_{P}=v_{P}(g)f(P)$ if $P\in X-S$.
        \item[(iv)] $\sum_{P\in X}(f,g)_{P}=0$.
    \end{enumerate}
\end{definition}

\begin{proposition}\label{prop2.1.2}
    $f$ admits a modulus $\md$ iff there exists a local symbol assiciated to $f$ and $\md$, and such symbol is then unique.
\end{proposition}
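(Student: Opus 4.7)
The statement has three parts which I would handle in order: (a) the existence of a local symbol implies $f$ admits $\md$; (b) uniqueness of the symbol; (c) admission of $\md$ implies existence of the symbol. Part (a) is almost immediate: if $g \equiv 1$ mod $\md$ then $v_P(g) = 0$ for every $P \in S$, so $(g)$ is prime to $S$, and combining axioms (ii), (iii), (iv) collapses $\sum_P (f,g)_P = 0$ into $f((g)) = 0$.

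For uniqueness (b), I would study the difference $\delta_P(g)$ of two candidate symbols. This difference is still multiplicative in $g$, vanishes for $P \notin S$ by (iii), vanishes when $g \equiv 1$ mod $\md$ at $P$ by (ii), and still satisfies the sum formula (iv). The key device is weak approximation: given $P \in S$ and any $g \in k(X)^{*}$, choose $h \in k(X)^{*}$ with $h/g \equiv 1$ mod $\md$ at $P$ and $h \equiv 1$ mod $\md$ at every other point of $S$. Then $\delta_P(g) = \delta_P(h)$, while $\delta_{P'}(h) = 0$ for every $P' \neq P$, so axiom (iv) applied to $h$ forces $\delta_P(h) = 0$.

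For existence (c), axiom (iii) forces the definition for $P \notin S$. For $P \in S$, by the same approximation I choose $g_P \in k(X)^{*}$ with $g_P / g \equiv 1$ mod $\md$ at $P$ and $g_P \equiv 1$ mod $\md$ at every other point of $S$, and define
\[
(f,g)_P := -f\bigl((g_P) - v_P(g_P) \cdot P\bigr).
\]
The argument is a divisor supported on $X - S$, because $g_P$ has trivial valuation at all points of $S \setminus \{P\}$. Independence of the choice of $g_P$ is where the admission hypothesis enters: two choices differ by a function $\equiv 1$ mod $\md$ at every point of $S$, and one checks that the associated divisors differ by exactly that function's principal divisor, on which $f$ vanishes by hypothesis. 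Axioms (i), (ii), (iii) then hold by construction.

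The genuine step, and the one I expect to be the main obstacle, is verifying the global sum axiom (iv). The plan is to form $G := \prod_{P \in S} g_P$ and to check that $G/g \equiv 1$ mod $\md$ at every point of $S$, so that the admission hypothesis gives $f((G/g)) = 0$. A careful divisor computation then identifies $\sum_{P \in X}(f,g)_P$ with $-f((G/g))$: concretely, $\sum_{P \in S}\bigl((g_P) - v_P(g)\cdot P\bigr) = (G) - (g)|_S$ on $X - S$, while $\sum_{P \notin S} v_P(g) f(P) = f((g) - (g)|_S)$, and these combine to $-f((G) - (g)) = -f((G/g)) = 0$. Getting the bookkeeping of the $S$-components right here is the only place where one must be genuinely careful.
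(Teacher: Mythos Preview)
Your proposal is correct and follows essentially the same route as the paper (which itself defers the verifications to Serre's book): the construction of the auxiliary functions $g_P$ via weak approximation and the definition $(f,g)_P = -\sum_{Q\notin S} v_Q(g_P)\,f(Q)$ are identical, since your expression $-f\bigl((g_P)-v_P(g_P)\cdot P\bigr)$ is exactly this sum once one notes that $v_Q(g_P)=0$ for $Q\in S\setminus\{P\}$. Your treatments of uniqueness and of axiom~(iv) via the product $G=\prod_{P\in S} g_P$ are more explicit than what the paper records, but they are the standard verifications and the bookkeeping is right (in particular $(G)|_S=(g)|_S$, so $(G/g)$ is prime to $S$ and $G/g\equiv 1$ mod~$\md$ everywhere on~$S$).
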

\begin{proof}
    (Sketch) If a local symbol exists (with respect to a modulus $\md$), and $g\equiv1$ mod $\md$, then $f((g))=0$ by properties (ii), (iii), (iv).
    
    If $f$ admits a modulus $\md$, then define a local symbol as follows: by (iii), define $(f,g)_{P}\defeq v_{P}(g)f(P)$ if $P\in X-S$. For $P\in S$, by approximation theorem of valuations, we can find an auxiliary function $g_{P}$ such that $g_{P}\equiv1$ mod $\md$ at all $Q\in S-P$, and that $g/g_{P}\equiv1$ mod $\md$, and define $(f,g)_{P}\defeq-\sum_{Q\notin S}v_{Q}(g_{P})f(Q)$.
    
    Verification that properties (i)-(iv) are satisfied can be found in [1].
\end{proof}

\begin{proposition}
    Let $f:X-S\rightarrow G$, and let $\theta:G\rightarrow G'$ be an algebraic homomorphism of commutative groups. If $f$ admits a modulus $\md$, then $\theta\circ f$ also admits $\md$ as a modulus, and the corresponding local symbols satisfy $(\theta\circ f,g)_{P}=\theta((f,g)_{P}).$ 
\end{proposition}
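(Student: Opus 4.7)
The plan is to use the uniqueness of local symbols established in \Cref{prop2.1.2}. First I would show directly from the definitions that $\theta \circ f$ admits $\md$ as a modulus: if $g \in k(X)^*$ with $g \equiv 1$ mod $\md$, then $(\theta \circ f)((g)) = \theta(f((g))) = \theta(0) = 0$, using that $\theta$ is a homomorphism of algebraic groups and that $f((g)) = 0$ since $f$ admits $\md$. By \Cref{prop2.1.2}, $\theta \circ f$ therefore has a unique associated local symbol $(\theta \circ f, g)_P$.

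The remaining task is to identify this local symbol. Define a candidate assignment
\[
    \langle \theta \circ f, g\rangle_P \defeq \theta\bigl((f,g)_P\bigr)
\]
for every $P \in X$ and $g \in k(X)^*$. I would then verify the four axioms of a local symbol for this candidate. Property (i) (additivity in $g$) follows from additivity of $(f, \cdot)_P$ together with $\theta$ being a group homomorphism. Property (ii) is immediate: if $P \in S$ and $g \equiv 1$ mod $\md$ at $P$, then $(f,g)_P = 0$, hence $\theta((f,g)_P) = 0$. Property (iii) for $P \in X - S$ reads
\[
    \theta\bigl((f,g)_P\bigr) = \theta\bigl(v_P(g) f(P)\bigr) = v_P(g)\, (\theta \circ f)(P),
\]
since $\theta$ respects the group law and hence integer multiplication. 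Property (iv) follows by pulling $\theta$ out of the (finite) sum: $\sum_P \theta((f,g)_P) = \theta\bigl(\sum_P (f,g)_P\bigr) = \theta(0) = 0$.

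Having checked that $\langle \theta \circ f, g\rangle_P$ satisfies all four axioms, the uniqueness part of \Cref{prop2.1.2} forces $(\theta \circ f, g)_P = \theta((f,g)_P)$, completing the proof. There is no serious obstacle here; the only subtle point is justifying that axioms (i), (iii), (iv) all use only the abstract group-homomorphism property of $\theta$ (not, e.g., any continuity or regularity statement beyond what is built into ``algebraic homomorphism''), and that the sum in (iv) remains finite, which it does because $v_P(g) = 0$ for all but finitely many $P$.
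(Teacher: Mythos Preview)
Your proof is correct and follows essentially the same route as the paper: both verify the four local-symbol axioms for the candidate $\theta((f,g)_P)$ and invoke \Cref{prop2.1.2}. The only cosmetic difference is that you first check directly that $\theta\circ f$ admits $\md$ before verifying the axioms, whereas the paper simply verifies the axioms and then deduces that $\md$ is a modulus from the ``if'' direction of \Cref{prop2.1.2}; your preliminary step is thus slightly redundant but harmless.
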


\begin{proof}
    We check that $(\theta\circ f,g)_{P}=\theta((f,g)_{P})$ is a local symbol.
    \begin{enumerate}
        \item[(i)] $\theta((f,gg')_{P})=\theta((f,g)_{P}+(f,g')_{P})=\theta((f,g)_{P})+\theta((f,g')_{P})$
        \item[(ii)] Let $g\equiv1$ mod $\md$ at $P$. $\theta((f,g)_{P})=\theta(0)=0$
        \item[(iii)] Let $P\in X-S$. $\theta((f,g)_{P})=\theta(v_{P}(g)f(P))=v_{P}(g)\theta(f(P))$
        \item[(iv)] $\sum_{P\in X}\theta((f,g)_{P})=\theta(\sum_{P\in X}(f,g)_{P})=\theta(0)=0$.
    \end{enumerate}
    
    So by \Cref{prop2.1.2}, $\md$ is a modulus of $\theta\circ f$.
\end{proof}

\begin{definition}\label{dtrace}
    Let $f:X-S\rightarrow G$, and let $\pi:X\rightarrow X'$ be a covering of curves. Put $S'=\pi^{-1}(S)$, and for every $P'\in X'$, denote by $\pi^{-1}(P')$ the divisor of $X$ which is the inverse image of $P'$ by $\pi$ counting multiplicity (so we have $\pi^{-1}(P')=\sum_{P\mapsto P'}e_{P}P$, where $e_{P}$ denotes the ramification index at $P$). If $P'\notin S'$, then $f(\pi^{-1}(P'))$ makes sense, and we get a map $$\Tr_{\pi}f:X'-S'\rightarrow G$$ which will be called the trace of $f$.
\end{definition}

\begin{proposition}\label{trace}
    With the setting of \Cref{dtrace}, if $f$ admits a modulus $\md$, then $f'\defeq \Tr_{\pi}f$ admits a modulus $\md'$, and for every $P'\in X'$ and $g\in k(X')^{*}$, $$(\Tr_{\pi}f,g)_{P'}=\sum_{P\mapsto P'}(f,g\circ\pi)_{P}.$$
\end{proposition}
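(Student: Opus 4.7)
The plan is to exploit \Cref{prop2.1.2}: it suffices to exhibit a modulus $\md'$ on $X'$ with support $S'$ and show that the proposed formula $(\Tr_\pi f, g)_{P'} := \sum_{P\mapsto P'} (f, g\circ \pi)_P$ defines a local symbol associated to $\Tr_\pi f$ and $\md'$. Uniqueness of local symbols then forces equality, and admitting of $\md'$ follows automatically.

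First I would choose $\md'$. Writing $\md = \sum n_P P$, define $\md' = \sum_{P'\in S'} n_{P'}' P'$ with $n_{P'}' := \max_{P\mapsto P'} n_P$ (any larger choice would also work). The point of this choice is the standard valuation identity $v_P(g\circ \pi) = e_P\, v_{P'}(g)$ for $g\in k(X')^*$ and $P\mapsto P'$, which applied to $g-1$ gives $v_P((g\circ\pi) - 1) = e_P\, v_{P'}(g-1)$; hence if $g\equiv 1$ mod $\md'$ at $P'$, then for every $P\mapsto P'$ we have $v_P((g\circ\pi)-1) \geq e_P n_{P'}' \geq n_{P'}' \geq n_P$, i.e.\ $g\circ \pi \equiv 1$ mod $\md$ at $P$.

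Next I verify the four axioms for the candidate local symbol. Property (i) follows from bimultiplicativity of each term $(f,g\circ\pi)_P$, using $(gg')\circ\pi = (g\circ\pi)(g'\circ\pi)$. Property (ii) follows directly from the previous paragraph: when $g\equiv 1$ mod $\md'$ at $P'\in S'$, each summand $(f,g\circ\pi)_P$ vanishes by axiom (ii) for the local symbol of $f$. For property (iii), if $P'\notin S'$ then each $P\mapsto P'$ lies in $X-S$, so
\[
\sum_{P\mapsto P'}(f,g\circ\pi)_P = \sum_{P\mapsto P'} e_P v_{P'}(g)\, f(P) = v_{P'}(g)\, f(\pi^{-1}(P')) = v_{P'}(g)\, (\Tr_\pi f)(P').
\]
Finally, property (iv) is a double-sum rearrangement:
\[
\sum_{P'\in X'} \sum_{P\mapsto P'} (f, g\circ\pi)_P = \sum_{P\in X} (f, g\circ \pi)_P = 0,
\]
by axiom (iv) for the local symbol of $f$ applied to the function $g\circ \pi \in k(X)^*$.

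The only genuine thought required is the choice of $\md'$ large enough to ensure axiom (ii); everything else is formal pushforward of the corresponding axiom for $f$. So the main (mild) obstacle is bookkeeping: translating valuations upstairs into valuations downstairs via the ramification indices, which is exactly where $\md'$ gets fixed.
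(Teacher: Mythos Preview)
Your argument is correct and is exactly the paper's approach: verify the four axioms directly for the proposed expression, choosing $\md'$ large enough to push axiom (ii) through via the ramification formula. One small omission in your check of (ii): for $P'\in S'$ there may be $P\mapsto P'$ with $P\notin S$, and for such $P$ axiom (ii) of the local symbol for $f$ does not apply; instead use axiom (iii) together with $v_{P'}(g)=0$ (which follows from $g\equiv 1$ mod $\md'$ at $P'$) to get $(f,g\circ\pi)_P = e_P v_{P'}(g) f(P)=0$. The paper treats this case explicitly.
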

\begin{proof}
    We check that $(\Tr_{\pi}f,g)_{P'}=\sum_{P\mapsto P'}(f,g\circ\pi)_{P}$ is a local symbol.
    \begin{enumerate}
        \item[(i)] $\sum_{P\mapsto P'}(f,gg'\circ\pi)_{P}=\sum_{P\mapsto P'}(f,g\circ\pi)_{P}+(f,g'\circ\pi)_{P}=\sum_{P\mapsto P'}(f,g\circ\pi)_{P}+\sum_{P\mapsto P'}(f,g'\circ\pi)_{P}$
        \item[(ii)] Write $\md=\sum_{P\in S}n_{P}P$. For every $P'\in S'$, choose an integer $n_{P'}$ such that $$n_{P'}>n_{P}/e_{P}$$ for all $P\in S\cap\pi_{-1}(P')$. Let $g\equiv1$ mod $\md'$ at $P$, where $$\md'\defeq\sum_{P'\in S'}n_{P'}P'.$$ We have that for every $P\mapsto P'$ and $P\in S$, $$v_{P}(1-g\circ\pi)\geq e_{P}n_{P'}\geq n_{P},$$ so $(f,g\circ\pi)_{P}=0$. If $P\mapsto P'$ but $P\notin S$, then since $v_{P}(g\circ\pi)=0$, we also have $(f,g\circ\pi)=0$. So, for $P'\in S'$, $$\sum_{P\mapsto P'}(f,g\circ\pi)_{P}=0.$$
        \item[(iii)] Let $P'\in X'-S'$. $$\sum_{P\mapsto P'}(f,g\circ\pi)_{P}=\sum_{P\mapsto P'}v_{P}(g\circ\pi)f(P)=v_{P'}(g)\sum_{P\mapsto P'}e_{P}f(P)=v_{P'}(g)\Tr_{\pi}f.$$
        \item[(iv)] $\sum_{P'\in X'}\sum_{P\mapsto P'}(f,g\circ\pi)_{P}=\sum_{P}(f,g\circ\pi)_{P}=0$
    \end{enumerate}
    So by \Cref{prop2.1.2}, $\md$ is a modulus for $\Tr_{\pi}f$.
\end{proof}

\begin{proposition}\label{tracereg}
    With the setting of \Cref{dtrace}, $\Tr_{\pi}f:X'-S'\rightarrow G$ is a regular map.
\end{proposition}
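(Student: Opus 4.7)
The plan is to factor $\Tr_{\pi}f$ through the $n$-th symmetric product $X^{(n)}$ of $X$, where $n=\ddeg\pi$. Specifically, I would construct (a) a regular ``fibre'' map $\sigma\colon X'\to X^{(n)}$ sending $P'\mapsto\pi^{-1}(P')$ counted with ramification multiplicities, and (b) a regular ``addition-after-$f$'' map $\tilde f\colon (X-S)^{(n)}\to G$. Since $P'\notin S'$ forces every preimage of $P'$ to lie in $X-S$, the restriction of $\sigma$ to $X'-S'$ lands in $(X-S)^{(n)}$, and by construction $\tilde f\circ\sigma|_{X'-S'}=\Tr_{\pi}f$.

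Step (b) is the easier one. The map $(X-S)^{n}\to G$ defined by $(P_{1},\dots,P_{n})\mapsto f(P_{1})+\cdots+f(P_{n})$ is regular (because $f$ and the group law on $G$ are), and it is invariant under the natural symmetric-group action, so it descends to a regular map $\tilde f$ on the quotient $(X-S)^{(n)}=(X-S)^{n}/S_{n}$, which exists as a scheme since $X-S$ is quasi-projective.

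The main obstacle is step (a): interpreting ``$P'\mapsto\pi^{-1}(P')$'' scheme-theoretically and verifying regularity. The crucial point is that $\pi$ is automatically finite flat of degree $n$, being a finite surjective morphism between smooth curves, so the pullback of the effective Cartier divisor $[P']$ on $X'$ is a well-defined effective Cartier divisor on $X$ of degree $n$, namely $\sum_{P\mapsto P'}e_{P}[P]$. For a smooth curve, the symmetric product $X^{(n)}$ represents the functor of relative effective Cartier divisors of degree $n$, so this pullback construction is precisely what defines the morphism $\sigma$. If one prefers to avoid invoking this representability result, one can argue locally instead: cover $X'$ by affine opens $\spec A'$ over which $\pi$ becomes $\spec A\to\spec A'$ with $A$ a free $A'$-module of rank $n$, and then write $\sigma$ down explicitly using the characteristic polynomials of multiplication-by-$a$ operators on $A$ for suitably chosen generators $a\in A$; regularity is then immediate from the polynomial nature of the formulas. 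Either way, composing with $\tilde f$ yields that $\Tr_{\pi}f$ is regular on all of $X'-S'$.
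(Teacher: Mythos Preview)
Your proof is correct and follows essentially the same approach as the paper: factor $\Tr_{\pi}f$ as the composition of a fibre map $X'-S'\to (X-S)^{(n)}$ with the symmetrised addition map $(X-S)^{(n)}\to G$. In fact you give more justification than the paper does, since the paper simply asserts that the fibre map $\pi^{-1}\colon X'-S'\to (X-S)^{(n)}$ is regular, whereas you supply an argument via finite flatness and the representability of the symmetric product (or the local characteristic-polynomial description).
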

\begin{proof}
    Write $n$ for the degree of the covering $X\rightarrow X'$.
    
    Define $F:(X-S)^{n}\rightarrow G$ to be $$F(P_{1},...,P_{n})=f(P_{1})+...+f(P_{n}).$$ F is invariant under permutations of $(X-S)^{n}$, so $F$ factors through $(X-S)^{(n)}$, giving a regular map $F':(X-S)^{(n)}\rightarrow G$.
    
    For every $P'\in X'-S'$, we identify $\pi^{-1}(P')=\sum_{P\mapsto P'}e_{P}P$ with an element of $(X-S)^{(n)}$. The map $$\pi^{-1}:X'-S'\rightarrow (X-S)^{(n)}$$ is regular. 
    
    The composition $$F'\circ\pi^{-1}:X'-S'\rightarrow G$$ is then a regular map equal to $\Tr_{\pi}f$.
\end{proof}

A non-constant $g\in k(X)$ is a covering from $X$ to $\mathbf{P}_{k}^{1}$, so the above proposition applies. Write \(\mathbf{m}=\sum n_{P}P\). We write \(g \equiv 0 \)  mod \(\mathbf{m}\) if \(v_{P} \geq n_{P}\) for each \(P\). If $g\equiv 0$ mod $\md$, then $g(P)=0$ for all $P\in S$, and $S'=g(S)=\{0\}\subset \mathbf{P}_{k}^{1}$. Then $\Tr_{g}f$ is a regular map from $\mathbf{P}_{k}^{1}$ to $G$.

\begin{proposition}\label{crit}
    $f:X\dashrightarrow G$ admits a modulus $\md$ iff for every non-constant rational function $g$ with $g\equiv0$ mod $\md$, $\Tr_{g}f$ is constant.
\end{proposition}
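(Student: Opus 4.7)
The plan is to prove both directions directly from the definition of ``admits a modulus,'' without invoking the local symbols themselves.

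For the ``only if'' direction, I assume $f$ admits $\md$ and take any non-constant $g\in k(X)^{*}$ with $g\equiv0$ mod $\md$. Since $g(S)\subseteq\{0\}$, \Cref{tracereg} already gives that $\Tr_{g}f$ is a regular map $\mathbf{P}_{k}^{1}-\{0\}\rightarrow G$, and the goal is to show it is constant. The key trick is, for each $a\in k^{*}$, to consider the rational function $\phi_{a}\defeq 1-g/a\in k(X)^{*}$. Since $\phi_{a}-1=-g/a$, one has $v_{P}(\phi_{a}-1)=v_{P}(g)\geq n_{P}$ at every $P\in S$, so $\phi_{a}\equiv 1$ mod $\md$ and hence $f((\phi_{a}))=0$ by hypothesis. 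On the other hand, $\phi_{a}=-(g-a)/a$ is a nonzero scalar multiple of $g-a$, so their divisors agree, and the divisor of $g-a$ is $g^{-1}(a)-g^{-1}(\infty)$. Applying $f$ to $(\phi_{a})$ then yields $\Tr_{g}f(a)=\Tr_{g}f(\infty)$, and since $a\in k^{*}$ was arbitrary, $\Tr_{g}f$ is constant.

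For the converse, starting from $\phi\in k(X)^{*}$ with $\phi\equiv 1$ mod $\md$, I aim to extract $f((\phi))=0$ from the constancy assumption by taking $g\defeq 1-\phi$. The constant case $\phi\in k$ forces $(\phi)=0$ and is trivial, so I may assume $\phi$ is non-constant; then $g$ is non-constant as well, and $v_{P}(g)=v_{P}(1-\phi)\geq n_{P}$ at each $P\in S$, i.e.\ $g\equiv 0$ mod $\md$. By hypothesis $\Tr_{g}f$ is constant on $\mathbf{P}_{k}^{1}-\{0\}$, so in particular $\Tr_{g}f(1)=\Tr_{g}f(\infty)$. Observing that $g^{-1}(1)$ is the divisor of zeros of $g-1=-\phi$, hence equals $(\phi)_{0}$, while $g^{-1}(\infty)$ is the pole divisor of $1-\phi$, hence equals $(\phi)_{\infty}$, one concludes $f((\phi))=f((\phi)_{0}-(\phi)_{\infty})=\Tr_{g}f(1)-\Tr_{g}f(\infty)=0$.

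The only substantive step is the forward-direction trick of introducing $\phi_{a}=1-g/a$. The ``naive'' attempt, pulling back a M\"obius transformation $h(t)=(t-a)/(t-b)$ via $g$, collapses at points of $S$: one computes $1-h\circ g=(a-b)/(g-b)$, which has $v_{P}=0$ rather than $\geq n_{P}$ because $g-b$ is a unit at $P\in S$ when $b\neq 0$. Using $\phi_{a}=1-g/a$ instead sidesteps this obstruction because $\phi_{a}-1=-g/a$ directly inherits the high-order vanishing of $g$ at $S$, while still having the required divisor $g^{-1}(a)-g^{-1}(\infty)$ on $X$.
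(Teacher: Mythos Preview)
Your proof is correct and follows essentially the same route as the paper: in the forward direction the paper notes $g-a\equiv -a$ mod $\md$ and concludes $f((g-a))=0$, which is exactly your $\phi_{a}=-(g-a)/a\equiv 1$ mod $\md$ made explicit; in the converse the paper writes $h=1-g$ and uses $(h)=(g)_{1}-(g)_{\infty}$, identical to your argument with $g=1-\phi$.
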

\begin{proof}
    Assume $\md$ is a modulus of $f$. Let $g$ be a non-constant function with $g\equiv0$ mod $\md$. For every $a\in \mathbf{P}_{k}^{1}$, $\Tr_{g}f(a)=f(g^{-1}(a))$, which is $f((g)_{a})$. ($(g)_{a}$ is the divisor of zeroes of $g-a$ when $a\neq\infty$, and the divisor of poles of $g$ when $a=\infty$)
    
    If in addition $a\neq\infty$, then since $g\equiv0$ mod $\md$, we have that $g-a$ is $-a$ mod $\md$. By linearity of $f$ as a map from the set of divisors prime to $S$, and the fact that it admits $\md$ as a modulus, we have that $f((g-a))=0$, which means $\Tr_{g}f(a)=f((g)_{a})=f((g-a)_{\infty})=f((g)_{\infty})=\Tr_{g}f(\infty)$. So $\Tr_{g}f$ is constant.
    
    Conversely, assume $\Tr_{\pi}g$ is constant for every non-constant $g\equiv0$ mod $\md$. Let $h\equiv1$ mod $\md$. We can write $h=1-g$ with $g\equiv0$ mod $\md$. If $g$ is constant, then $h$ is also constant, and $f((h))=f(0)=0$. If $g$ is non-constant, then $(h)=(g)_{1}-(g)_{\infty}$ (the divisors of 0's of $h$ which are 1's of $g$, minus the divisor of poles of $g$). So $f((h))=\Tr_{g}f(1)-\Tr_{g}f(\infty)=0$.
\end{proof}

\section{Proof of Theorem 1 in Positive Characteristic}

Assume $\textrm{char}(k)>0$. We first quote two theorems about the structure of connected commutative algebraic groups.

\begin{theorem}\label{chevalley}
    (Chevalley) Every connected algebraic group $G$ contains a normal subgroup $R$ such that $R$ is a connected linear group, and  that $G/R$ is an Abelian variety.
\end{theorem}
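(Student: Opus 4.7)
The plan is to construct $R$ as the maximal connected normal affine (i.e.\ linear) algebraic subgroup of $G$, and then show the quotient $G/R$ is proper, hence an abelian variety. The paper quotes this result (Chevalley's structure theorem) without proof, so what follows is only a sketch of a standard strategy; a complete proof is quite involved.

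First, I would establish the existence of a largest connected normal affine subgroup. Consider the family $\mathcal{C}$ of closed connected normal affine subgroups of $G$. Since $G$ is finite-dimensional, one can take $R \in \mathcal{C}$ of maximal dimension. The key point is that $\mathcal{C}$ is closed under products: if $H_{1}, H_{2} \in \mathcal{C}$, the image of the multiplication map $H_{1} \times H_{2} \to G$ is again a closed connected normal subgroup, and it is affine because the image of an affine group scheme under a homomorphism of algebraic groups is affine. This gives uniqueness of the maximal element and shows that $R$ contains every connected normal affine subgroup.

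Next, using the general theory of quotients of algebraic groups (Chevalley, Rosenlicht), form $G/R$ as a smooth connected algebraic group. By maximality of $R$, the quotient $G/R$ contains no nontrivial connected affine algebraic subgroup. The crucial structural step is then to show that any connected algebraic group $H$ with no nontrivial connected affine subgroup is proper. A standard approach is to choose an equivariant Nagata compactification $\bar{H}$ and examine the boundary $D = \bar{H} \setminus H$: if $D$ were nonempty, one would use the translation action of $H$ on $\bar{H}$, together with a careful analysis of stabilizers of irreducible boundary components, to extract a nontrivial connected affine subgroup of $H$, contradicting the maximality of $R$.

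The main obstacle is precisely this last step -- proving properness of the quotient. It lies at the heart of Chevalley's theorem and relies on subtle facts about actions of algebraic groups on complete varieties, together with rigidity properties of abelian varieties (for instance, that every morphism from an abelian variety to an affine variety is constant, and that the automorphism scheme of an abelian variety is discrete). It is for this reason that expository accounts such as the present one usually cite the theorem rather than reprove it.
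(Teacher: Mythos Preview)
The paper states Chevalley's theorem without proof (it is explicitly introduced as one of two structure results being ``quoted''), so there is no argument in the paper to compare against. Your sketch is a faithful outline of the standard modern strategy, and you correctly identify where the genuine difficulty lies, namely in establishing properness of the quotient.

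One small imprecision worth flagging: from the maximality of $R$ you can only immediately conclude that $G/R$ has no nontrivial connected \emph{normal} affine subgroup, since the preimage-in-$G$ argument needs normality for the resulting subgroup to lie in your family $\mathcal{C}$. Passing from ``no nontrivial connected normal affine subgroup'' to ``no nontrivial connected affine subgroup whatsoever'' is itself part of the hard final step (or else one arranges the endgame to work under the weaker hypothesis). This does not change the overall shape of your outline, which is sound as a sketch.
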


\begin{theorem}\label{gmu}
    Every connected commutative linear algebraic group is isomorphic to the product of some copies of the multiplicative group $\mathbf{G}_{\md}$ and a unipotent group $U$ (that is, $U$ is isomorphic to a subgroup of the group of triangular matrices having only 1's on the principal diagonal).
\end{theorem}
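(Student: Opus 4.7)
The plan is to decompose $G$ into its semisimple and unipotent parts via the Jordan decomposition of matrices, and to show that the former is a torus while the latter is unipotent by construction.

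Since $G$ is linear, I would embed $G$ as a closed subgroup of some $GL(V)$. Each $g \in G$ then has a unique Jordan decomposition $g = g_{s} g_{u} = g_{u} g_{s}$ with $g_{s}$ semisimple and $g_{u}$ unipotent, and a key nontrivial fact (the algebraic Jordan decomposition) is that $g_{s}$ and $g_{u}$ both lie in $G$ itself. Because $G$ is commutative, the compatibility $(gh)_{s} = g_{s} h_{s}$ and $(gh)_{u} = g_{u} h_{u}$ holds for all $g,h \in G$, so the assignments $g \mapsto g_{s}$ and $g \mapsto g_{u}$ are group homomorphisms. Their images $G_{s}$ and $G_{u}$ are closed connected subgroups of $G$, and $G_{s} \cap G_{u} = \{e\}$ since the identity is the only element that is simultaneously semisimple and unipotent.

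The multiplication $\mu : G_{s} \times G_{u} \to G$ is then a group homomorphism which is surjective (by existence of Jordan decomposition) and injective (since $G_{s} \cap G_{u} = \{e\}$). I would promote it to an isomorphism of algebraic groups by invoking the algebraicity of the Jordan decomposition: the maps $g \mapsto g_{s}$ and $g \mapsto g_{u}$ are morphisms of varieties, so $\mu^{-1}(g) = (g_{s}, g_{u})$ is regular.

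Finally, $G_{u}$ is unipotent by definition. For $G_{s}$: its elements form a pairwise commuting family of semisimple matrices in $GL(V)$, hence can be simultaneously diagonalized after a change of basis, so $G_{s}$ embeds as a closed connected subgroup of the standard diagonal torus $\mathbf{G}_{\md}^{\dim V}$, and any such subgroup is itself a torus, giving $G_{s} \cong \mathbf{G}_{\md}^{r}$ for some $r$. The main obstacle throughout is the algebraicity of the Jordan decomposition — that $g \mapsto g_{s}$ and $g \mapsto g_{u}$ are morphisms of varieties and that $g_{s}, g_{u}$ actually lie in $G$ rather than merely in the ambient $GL(V)$ — which is the standard but non-elementary content one would typically cite from Borel or Humphreys.
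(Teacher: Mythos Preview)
The paper does not give a proof of this theorem; it is simply quoted as a known structural result (along with Chevalley's theorem) and used as a black box in the reduction argument for Theorem~1. So there is no ``paper's proof'' to compare against.

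Your outline is the standard argument one finds in Borel, Humphreys, or Springer, and it is essentially correct. One point deserves care: you write that $g \mapsto g_{s}$ and $g \mapsto g_{u}$ are morphisms of varieties and use this to invert $\mu$. That statement is not what the references actually prove directly---the intrinsic Jordan decomposition (e.g.\ Borel~I.4.4) gives only $g_{s}, g_{u} \in G$, and the closedness of $G_{s}$, $G_{u}$. The isomorphism $G_{s} \times G_{u} \cong G$ (Borel~I.4.7) is instead obtained by showing the bijective homomorphism $\mu$ is separable (for instance by checking its differential at the identity, using the additive Jordan decomposition of $\mathfrak{g}$), which over a perfect field forces it to be an isomorphism. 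Once that isomorphism is in hand, the projection maps are of course morphisms---but citing them as the \emph{reason} for the isomorphism is circular. If you simply cite the isomorphism statement from the references rather than the regularity of the projections, the argument is clean.

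The remaining step---that a closed connected subgroup of a torus is again a torus---is also nontrivial (it is the rigidity of diagonalizable groups), but you are right that it is standard and citable.
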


We use these 2 theorems to reduce the proof as follows: let $f:X-S\rightarrow G$. Use \Cref{chevalley} to factor $G$ as $G/R=A$. Use \Cref{gmu} to write $R=R_{1}+R_{2}$, which gives that $G$ is embedded as a subgroup in $G/R_{1}\times G/R_{2}$.

If $\md_{i}$ (with support $S$) are moduli for the composed maps $X-S\rightarrow G\rightarrow G/R_{i}$ for each $i=1,2$, then $\md=\textrm{Sup}(\md_{1},\md_{2})$ is a modulus for $f$ with support $S$. It would thus suffice to prove Theorem 1 for $G/R_{1}$ and $G/R_{2}$, which are extensions of $A$ by $R_{1}$ and $R_{2}$. Repeating this process, and using \Cref{gmu}, it suffices to prove Theorem 1 for the following 2 cases:

\begin{enumerate}
    \item $G$ is an extension of an abelian variety $A$ by $\mathbf{G}_{m}$.
    \item $G$ is an extension of an abelian variety $A$ by a unipotent group $U$.
\end{enumerate}

First we prove case 1.

\begin{lemma}\label{casealemma2}
    Every regular map $\mathbf{P}_{k}^{1}-\{point\}\rightarrow \mathbf{G}_{m}$ is constant.
\end{lemma}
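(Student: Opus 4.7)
The plan is to reduce to an explicit algebraic computation with the coordinate ring of $\mathbf{A}^{1}$. First I would move the removed point to $\infty$ using an automorphism of $\mathbf{P}^{1}_{k}$: because $k$ is algebraically closed (as assumed throughout this section) the point is $k$-rational, and $\textrm{PGL}_{2}(k)$ acts transitively on $k$-rational points of $\mathbf{P}^{1}_{k}$. Composing with such an automorphism leaves the statement unchanged, so the domain becomes $\mathbf{A}^{1}_{k} = \spec k[x]$.

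Next I would translate into algebra. Since $\mathbf{G}_{m} = \spec k[t,t^{-1}]$, a morphism $\mathbf{A}^{1}_{k} \rightarrow \mathbf{G}_{m}$ corresponds to a $k$-algebra homomorphism $k[t,t^{-1}] \rightarrow k[x]$, and such a homomorphism is determined by the image of $t$, which must be a unit $f(x) \in k[x]^{\times}$.

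Finally, the units of $k[x]$ are precisely the nonzero constants $k^{\times}$: if $f(x)g(x) = 1$ in $k[x]$, then comparing degrees gives $\ddeg f + \ddeg g = 0$, forcing both to have degree zero. (Equivalently, using that $k$ is algebraically closed, a nowhere-vanishing polynomial on $\mathbf{A}^{1}_{k}$ has no roots, so by the fundamental theorem of algebra it must be a nonzero constant.) Therefore $f \in k^{\times}$, and the morphism factors through a single point of $\mathbf{G}_{m}$, i.e.\ is constant.

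There is no real obstacle here; the lemma is essentially a restatement of the basic identity $\sheaf(\mathbf{A}^{1}_{k})^{\times} = k^{\times}$, and will presumably be used in the next step to handle case 1 (extensions by $\mathbf{G}_{m}$) by showing, via \Cref{crit}, that for suitable $\md$ the trace $\Tr_{g}f$ into the $\mathbf{G}_{m}$-factor has to be constant.
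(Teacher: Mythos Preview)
Your proof is correct and follows essentially the same route as the paper: move the deleted point to $\infty$, so the domain becomes $\mathbf{A}^{1}_{k}$, and then observe that a regular map to $\mathbf{G}_{m}$ is a polynomial with no zeros (equivalently, a unit in $k[x]$), hence a nonzero constant. The paper's proof is just the one-line version of this.
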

\begin{proof}
    We can assume that the point is $\infty$. In this case, we have a regular map $\mathbf{A}_{k}^{1}\rightarrow\mathbf{A}_{k}^{1}-\{0\}$, which is a polynomial with no zero, which is a constant.
\end{proof}

\begin{lemma}\label{casealemma}
    Every rational map from $\mathbf{P}_{k}^{1}$ to an abelian variety is constant.
\end{lemma}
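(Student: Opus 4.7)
The plan is to first extend the given rational map to a regular morphism $f:\mathbf{P}_{k}^{1}\to A$; this is possible because $A$ is complete and $\mathbf{P}^{1}$ is a smooth curve, so the valuative criterion of properness forces any rational map from $\mathbf{P}^{1}$ to $A$ to be defined everywhere.

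Then I would assume for contradiction that $f$ is not constant, so its image $C\defeq f(\mathbf{P}^{1})$ is an irreducible closed curve in $A$. To simplify matters I would reduce to the case where $f$ is birational onto $C$: letting $\nu:\tilde{C}\to C$ be the normalization, $f$ factors as $\mathbf{P}^{1}\xrightarrow{\tilde{f}}\tilde{C}\xrightarrow{\nu}A$ since $\mathbf{P}^{1}$ is normal. The morphism $\tilde{f}$ is surjective (otherwise its image would be finite, contradicting $\dim C=1$), so it yields an inclusion of function fields $k(\tilde{C})\hookrightarrow k(\mathbf{P}^{1})=k(t)$, and L\"uroth's theorem then forces $k(\tilde{C})$ to be purely transcendental over $k$, so $\tilde{C}\cong\mathbf{P}^{1}$. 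Replacing $f$ by $\nu$, I may assume that $f:\mathbf{P}^{1}\to A$ is birational onto the curve $C\subset A$.

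I would then derive a contradiction from two incompatible computations of the differential $df$. First, at every preimage $q$ of a smooth point $p=f(q)$ of $C$, the map $f$ is a local isomorphism onto a neighborhood of $p$ in $C$, so $df_{q}:T_{q}\mathbf{P}^{1}\to T_{p}C\subset T_{p}A$ is injective and in particular nonzero; since $C$ is reduced and $k$ is perfect, the smooth locus of $C$ is dense, so $df$ is nonzero on a dense open subset of $\mathbf{P}^{1}$. Second, $\Omega_{A}^{1}$ is globally trivialised by translation-invariant $1$-forms (identified with the cotangent space of $A$ at the identity), and each such form pulls back to a section of $H^{0}(\mathbf{P}^{1},\Omega_{\mathbf{P}^{1}}^{1})=0$; hence the cotangent map $f^{*}\Omega_{A}^{1}\to\Omega_{\mathbf{P}^{1}}^{1}$ vanishes identically and $df_{q}=0$ for every $q\in\mathbf{P}^{1}$. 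These two statements are incompatible, so $f$ must be constant.

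The main obstacle I expect is the characteristic $p$ subtlety: the vanishing of $df$ alone does not directly force $f$ constant (since in positive characteristic it could a priori factor through Frobenius), which is why the preliminary reduction via normalisation and L\"uroth's theorem, putting us in a birational situation where $df$ must be generically nonzero, is essential to make the differential-forms argument conclusive.
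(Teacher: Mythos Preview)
Your proof is correct and handles the characteristic $p$ subtlety properly: the reduction via normalisation and L\"uroth's theorem to a birational $f:\mathbf{P}^{1}\to C\subset A$ ensures that $df$ is genuinely nonzero over the smooth locus of $C$, which then contradicts the vanishing forced by $H^{0}(\mathbf{P}^{1},\Omega^{1}_{\mathbf{P}^{1}})=0$ together with the global triviality of $\Omega^{1}_{A}$.

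However, your route is quite different from the paper's. The paper also begins by extending the rational map to a regular one (citing that rational maps from smooth varieties to abelian varieties are everywhere defined), but then observes that such a map admits the modulus $0$ and invokes the universal property of the generalised Jacobian from the next chapter: the map must factor through the Jacobian of $\mathbf{P}^{1}$, which is a point since $g(\mathbf{P}^{1})=0$. The author explicitly flags that this forward reference is not circular. Your argument, by contrast, is entirely self-contained and classical, using only L\"uroth's theorem and the computation $H^{0}(\mathbf{P}^{1},\Omega^{1})=0$; it avoids any appeal to the Jacobian machinery, at the cost of a slightly longer argument and the extra care needed in positive characteristic. The paper's proof is slicker once the Jacobian is available, while yours would work in any exposition that does not want to rely on that theory.
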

\begin{proof}
    (This proof uses the theory of Jacobians, and is different from the proof given in [1]) By Theorem 3.2 of [3], any such rational map is regular (since $\mathbf{P}_{k}^{1}$ is non-singular). So such a map admits the modulus $0$.
    
    By the results of next chapter (which do not depend on results of this chapter), any such map factors through the Jacobian variety $J$ of $\mathbf{P}_{k}^{1}$, which is a point. So such a map is constant.
\end{proof}

\begin{proposition}
    Theorem 1 is true when $G$ is the extension of an abelian variety $A$ by $\mathbf{G}_{\md}$, taking $\md=\sum_{P\in S}P$.
\end{proposition}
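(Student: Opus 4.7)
The plan is to verify the criterion of \Cref{crit} with $\md=\sum_{P\in S}P$: I must show that for every non-constant $g\in k(X)$ satisfying $g\equiv 0$ mod $\md$, the trace $\Tr_{g}f$ is a constant map. Since $g\equiv 0$ mod $\md$ forces $v_{P}(g)\geq 1$ for every $P\in S$, we have $S'=g(S)=\{0\}$, so by \Cref{tracereg} the trace is a regular map $\Tr_{g}f:\mathbf{P}^{1}_{k}-\{0\}\to G$.

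The idea is to peel off the two pieces of the extension $0\to\mathbf{G}_{\md}\to G\xrightarrow{\pi} A\to 0$ using \Cref{casealemma} and \Cref{casealemma2} in turn. First, $\pi\circ \Tr_{g}f$ is a rational map $\mathbf{P}^{1}_{k}\dashrightarrow A$, which by \Cref{casealemma} is constant, equal to some $a_{0}\in A(k)$. Hence $\Tr_{g}f$ factors through the fiber $\pi^{-1}(a_{0})$, a $\mathbf{G}_{\md}$-torsor over $\spec(k)$. Since $k$ is algebraically closed throughout this section, this torsor is trivial, so I can pick an isomorphism $\pi^{-1}(a_{0})\cong \mathbf{G}_{\md}$ of $k$-varieties and regard $\Tr_{g}f$ as a regular map $\mathbf{P}^{1}_{k}-\{0\}\to \mathbf{G}_{\md}$. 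After a M\"obius change of coordinate sending $0$ to $\infty$, \Cref{casealemma2} forces this map to be constant, and hence $\Tr_{g}f$ itself is constant.

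The main obstacle is the first step: \Cref{casealemma} rests on the Jacobian-based argument referenced in its proof, and it is this essential input (genus zero means trivial Jacobian) that kills the $A$-valued component from an affine-source curve. The $\mathbf{G}_{\md}$ piece is then handled cleanly by the elementary \Cref{casealemma2}, together with the triviality of $\mathbf{G}_{\md}$-torsors over an algebraically closed field. Once both the $A$-component and the fiber-component of $\Tr_{g}f$ are constant, so is $\Tr_{g}f$, and \Cref{crit} gives that $\md=\sum_{P\in S}P$ is a modulus for $f$, as required.
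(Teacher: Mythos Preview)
Your argument is correct and follows essentially the same route as the paper: apply \Cref{crit}, project $\Tr_{g}f$ to $A$ and use \Cref{casealemma} to land in a single fiber, identify that fiber with $\mathbf{G}_{m}$, and finish with \Cref{casealemma2}. The only differences are cosmetic: you spell out why the fiber is isomorphic to $\mathbf{G}_{m}$ via triviality of torsors over an algebraically closed field (the paper just says ``a coset of $\mathbf{G}_{m}$, which is isomorphic to $\mathbf{G}_{m}$ as varieties''), and your M\"obius change of coordinates is unnecessary since \Cref{casealemma2} is already stated for $\mathbf{P}^{1}_{k}$ minus an arbitrary point.
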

\begin{proof}
    We use \Cref{crit}. Let $g$ be a non-constant rational function on $X$ such that $g\equiv0$ mod $\md$.
    
    $\textrm{Tr}_{g}f$ is a regular map from $\mathbf{P}_{k}^{1}-\{0\}$ to $G$ (by \Cref{tracereg}). Composing it with the projection $G\rightarrow A$, we get a rational map from $\mathbf{P}_{k}^{1}$ to $A$ which is constant by \Cref{casealemma}. So $\Tr_{g}f$ takes values in a coset of $\mathbf{G}_{m}$, which is isomorphic to $\mathbf{G}_{m}$ as varieties. By \Cref{casealemma2}, $\Tr_{g}f$ is constant. By \Cref{crit}, we are done.
\end{proof}

Next we prove case 2.

Write $p$ for the characteristic of $k$. Assume $G/U=A$, where $U$ is unipotent, and $A$ is an abelian variety. By considering an element $u\in U$ as $1+n$, n a nilpotent matrix in $\mathbf{GL}_{m}$. Then whenever $p^{r}\geq m$, we have $(1+n)^{p^{r}}=1+n^{p^{r}}=1$. On the other hand, according to a theorem of Weil, $p^{r}A=A$ (here we write the group law additively).

Put $U'=G/p^{r}G,G'=A\times U'$, and let $\theta:G\rightarrow G'$ be the product of the maps $G\rightarrow A$ and $G\rightarrow U'$. $p^{r}U'=0$, so it has no subgroup isomorphic to an abelian variety, or $\mathbf{G}_{m}$, and so by \Cref{gmu} it is nilpotent.

$\textrm{Ker}(\theta)=U\cap p^{r}G$, and $p^{r}G$ is an abelian variety (since $p^{r}U=0$, $p^{r}:G\rightarrow G$ factors as $G\rightarrow A\rightarrow G$). So $\textrm{Ker}(\theta)$ being the intersection of an affine variety with a complete variety, is a finite set.

\begin{lemma}
    If $\theta:G\rightarrow G'$ is an algebraic homomorphism of commutative groups with finite kernel, then if $\md$ is a modulus for $\theta\circ f$, it is also a modulus for $f$.
\end{lemma}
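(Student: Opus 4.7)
The plan is to invoke \Cref{crit} to reduce the claim to showing constancy of traces along rational functions, and then to exploit the finiteness of $\ker(\theta)$ together with connectedness of $\mathbf{A}_k^1$. The central observation I will use is that any morphism from the connected, reduced scheme $\mathbf{A}_k^1$ into a finite $k$-subscheme of $G$ must be constant when $k$ is algebraically closed.

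First I would unwind the definition of the trace (\Cref{dtrace}) and use that $\theta$ is a group homomorphism to obtain $\Tr_g(\theta\circ f)=\theta\circ\Tr_g f$ for every non-constant $g\in k(X)$. Applying \Cref{crit} to $\theta\circ f$, the hypothesis that $\md$ is a modulus for it gives that for every non-constant $g\equiv 0\bmod\md$ the map $\theta\circ\Tr_g f:\mathbf{P}_k^1-\{0\}\rightarrow G'$ is constant, with value say $\theta(c)$ for some $c\in G$. Consequently $\Tr_g f:\mathbf{A}_k^1\cong\mathbf{P}_k^1-\{0\}\rightarrow G$ factors through the coset $c+\ker(\theta)$, a finite closed subscheme of $G$.

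Next I would argue that this forces $\Tr_g f$ to be constant. Since $\mathbf{A}_k^1$ is connected, the morphism lands in a single connected component of $c+\ker(\theta)$, of the form $\mathrm{Spec}(R)$ for $R$ a local finite Artin $k$-algebra; since $\mathbf{A}_k^1$ is also reduced, it further factors through $\mathrm{Spec}(R/\mathrm{Nil}(R))=\mathrm{Spec}(k)$ (using $k=\bar k$). Hence $\Tr_g f$ is constant, and the reverse direction of \Cref{crit} then yields that $\md$ is a modulus for $f$. The only subtlety worth flagging is the possible presence of an infinitesimal component in $\ker(\theta)$ in positive characteristic; this is defused by the fact that morphisms from a reduced scheme are blind to nilpotents in the target, so it is genuine finiteness as a closed subscheme (rather than as a set) that the argument exploits.
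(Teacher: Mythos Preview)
Your proposal is correct and follows essentially the same approach as the paper: both invoke \Cref{crit}, use the identity $\Tr_g(\theta\circ f)=\theta\circ\Tr_g f$, observe that $\Tr_g f$ then lands in a coset of the finite kernel, and conclude constancy from connectedness of $\mathbf{P}_k^1-\{0\}$. Your added care about reducedness and possible infinitesimal components of $\ker(\theta)$ is a nice touch but not a departure from the paper's strategy, which simply treats the kernel as a finite set of points over the algebraically closed field.
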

\begin{proof}
    We use \Cref{crit}. If $g$ is a non-constant rational function on $X$ such that $g\equiv0$ mo $\md$, then the map $\Tr_{g}(\theta\circ f):\mathbf{P}_{k}^{1}-\{0\}\rightarrow G'$ is constant (by \Cref{crit}). We also have that $\Tr_{g}(\theta\circ f)=\theta\circ \Tr_{g}f$, which shows that $\Tr_{g}f$ takes values in a coset of $\textrm{Ker}(\theta)$, which is finite. As $\mathbf{P}_{k}^{1}-\{0\}$ is connected, it follows that $\Tr_{g}f$ is constant, so by \Cref{crit}, $\md$ is also a modulus for $f$.
\end{proof}

So, by the same reduction as in the beginning of this section, it suffices to prove Theorem 1 when $G$ is an abelian variety, and when $G$ is a unipotent group.

If $G$ is an abelian variety, then combining \Cref{casealemma} and \Cref{crit}, we get Theorem 1.

Assume $G$ is a connected, unipotent, commutative group. We consider it as embedded in a linear group $\mathbf{GL}_{r}$, such that each of its elements is of the form $1+N$, where $N=(n_{ij})$ is a strictly upper triangular matrix. So we can consider $G$ as embedded in $\mathbf{M}_{r\times r}$. Every rational map $g$ from a $Y$ to $G$ is then equivalent to $r^{2}$ rational functions $g_{ij}$. For each $Q\in Y$, put $$w_{Q}(g)=\textrm{Sup}(0,-v_{P}(g_{ij}))$$ ($w_{Q}$ ``counts the order of a pole''). In particular this applies to $f:X-S\rightarrow G$. Choose an integer $n>0$ such that $n>(r-1)w_{Q}(f)$ for all $Q\in S$.

\begin{lemma}\label{finallemma}
    If $f^{\alpha}$ is a family of rational maps from a curve $Y$ to $G$, then (composition law written additively) $$w_{Q}(\sum f^{\alpha})\leq(r-1)\textrm{Sup}_{\alpha}w_{Q}(f^{\alpha}).$$
\end{lemma}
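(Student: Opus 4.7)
The plan is to use the embedding $G \subset \mathrm{GL}_r$, so that each $f^\alpha$ has the form $1 + N^\alpha$ with $N^\alpha$ strictly upper triangular, and the group law, although written additively on $G$, corresponds to the product of matrices. Thus
\[
    \sum_\alpha f^\alpha \;=\; \prod_\alpha (1 + N^\alpha).
\]
Expanding this product, I get $1$ plus a sum of terms of the form $N^{\alpha_1}\cdots N^{\alpha_k}$ for various $k \geq 1$ and various (strictly increasing) index sequences.

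Next I would invoke nilpotency: since each $N^\alpha$ is a strictly upper triangular $r\times r$ matrix, any product of $r$ or more such matrices is zero. So the expansion truncates to $1 + \sum_{k=1}^{r-1} \sigma_k$, where $\sigma_k$ is a sum of products of exactly $k$ of the $N^\alpha$. Now I would control $w_Q$ entrywise. Each entry of $N^\alpha$ has $v_Q \geq -w_Q(f^\alpha)$ by definition of $w_Q$, so each entry of a $k$-fold matrix product $N^{\alpha_1}\cdots N^{\alpha_k}$ is a sum of products of $k$ such entries, and therefore has $v_Q \geq -\sum_{i=1}^k w_Q(f^{\alpha_i}) \geq -k \cdot \sup_\alpha w_Q(f^\alpha)$, using that $v_Q$ is additive on products and $v_Q$(sum) $\geq \min v_Q$.

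Summing over $k$ and over index tuples, still using $v_Q$(sum) $\geq \min v_Q$, I conclude that every entry of $\sum_{k=1}^{r-1}\sigma_k$ has $v_Q \geq -(r-1)\sup_\alpha w_Q(f^\alpha)$. Adding the identity matrix (whose entries have $v_Q = 0$ or $v_Q = +\infty$) does not lower any $v_Q$ below this bound, so
\[
    w_Q\!\left(\sum_\alpha f^\alpha\right) \;\leq\; (r-1)\,\sup_\alpha w_Q(f^\alpha),
\]
which is the claim. There is no real obstacle — the argument is just careful bookkeeping — but the one point to be explicit about is that the additive law on $G$ really does correspond to matrix multiplication of the $1+N^\alpha$'s, so that the expansion truncates by nilpotency at $r-1$ factors, which is where the factor $(r-1)$ comes from.
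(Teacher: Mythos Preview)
Your proposal is correct and follows essentially the same approach as the paper: write the additive law on $G$ as matrix multiplication of the $1+N^\alpha$, expand the product, truncate at products of length $r-1$ by nilpotency of strictly upper triangular matrices, and then bound the pole orders entrywise. The paper phrases the final step as ``the components are polynomials in the entries of the $N^\alpha$ of total degree $\leq r-1$'', which is exactly your valuation bookkeeping in condensed form.
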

\begin{proof}
    Write the composition law on $G$ multiplicatively. Write $$\prod f^{\alpha}=\prod(1+n^{\alpha})$$ where $n^{\alpha}$ are a family of rational maps of $Y$ into the space of strictly upper triangular matrices. Since the product of any $r$ such marices is $0$, we can write the right hand side of the above equation in the form $$\sum\limits_{0\leq k\leq r-1}\sum\limits_{\alpha_{1}<...<\alpha_{k}}n^{\alpha_{1}}...n^{\alpha_{k}}.$$ So the components $(\prod f^{\alpha})_{ij}$ are polynomials in the $n^{\alpha}$'s of total degree $\leq r-1$. So by the definition of $w_{Q}$, this gives $$w_{Q}(\prod f^{\alpha})\leq(r-1)\textrm{Sup}_{\alpha}(w_{Q}(n^{\alpha}))=(r-1)\textrm{Sup}_{\alpha}(w_{Q}(f^{\alpha}))$$ as was to be shown (the left hand side looks differently from the statement of the lemma because here we are writing the composition law multiplicatively).
\end{proof}

\begin{proposition}
    $\md=\sum_{P\in S}nP$ is a modulus for $f$ ($n$ is the integer defined right before \Cref{finallemma}). Thus Theorem 1 is true when $G$ is the a unipotent group.
\end{proposition}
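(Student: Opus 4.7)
The plan is to apply \Cref{crit}: it suffices to show that for every non-constant $g \in k(X)$ with $g \equiv 0 \pmod{\md}$, the trace $\Tr_g f : \mathbf{P}_k^1 - \{0\} \to G$ is constant. Since $G \subset \mathbf{M}_{r\times r}$ is affine and $\mathbf{P}_k^1$ is complete, any regular morphism $\mathbf{P}_k^1 \to G$ is constant; and by \Cref{tracereg}, $\Tr_g f$ is already regular on $\mathbf{P}_k^1 - \{0\}$. So the problem reduces to extending $\Tr_g f$ regularly across $0$, i.e.\ to showing $w_0(\Tr_g f) = 0$.

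The key device is to pass to a Galois closure of $g : X \to \mathbf{P}_k^1$. Let $\tilde g : Y \to \mathbf{P}_k^1$ be such a Galois cover with group $\Gamma$, let $H \le \Gamma$ correspond to $X = Y/H$, and write $\pi : Y \to X$ for the intermediate quotient. Choose coset representatives $\sigma_1, \dots, \sigma_n$ for $H\backslash\Gamma$, where $n = \deg g$. For generic $Q \in Y$ the $n$ preimages $g^{-1}(\tilde g(Q))$ are exactly the distinct points $\pi(\sigma_i Q)$, so as rational maps $Y \dashrightarrow G$ one has the identity
\[
\Tr_g f \circ \tilde g \;=\; \sum_{i=1}^{n} f \circ \pi \circ \sigma_i.
\]

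Now \Cref{finallemma} applied on $Y$ to this sum, together with the pullback rule $w_R(h\circ \psi) = e_R(\psi)\cdot w_{\psi(R)}(h)$ for a dominant map of smooth curves and the fact that the $\sigma_i$ are automorphisms, gives for any $R \in Y$
\[
w_R(\Tr_g f \circ \tilde g) \;\le\; (r-1)\,\sup_i\, e_{\sigma_i R}(\pi) \cdot w_{\pi(\sigma_i R)}(f).
\]
Choose $R \in \tilde g^{-1}(0)$, which is nonempty since $g(S) = \{0\}$. The same pullback rule makes the left hand side equal $e_R(\tilde g)\cdot w_0(\Tr_g f)$. On the right, the Galois invariance $e_{\sigma_i R}(\tilde g) = e_R(\tilde g)$ together with the multiplicativity $e_{\sigma_i R}(\tilde g) = e_{\sigma_i R}(\pi)\cdot e_{\pi(\sigma_i R)}(g)$ yields $e_{\sigma_i R}(\pi)/e_R(\tilde g) = 1/e_{\pi(\sigma_i R)}(g)$. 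Dividing through,
\[
w_0(\Tr_g f) \;\le\; (r-1)\,\sup_i\, \frac{w_{\pi(\sigma_i R)}(f)}{e_{\pi(\sigma_i R)}(g)}.
\]
Only those $i$ with $\pi(\sigma_i R) \in S$ contribute, and for such $Q = \pi(\sigma_i R)$ one has $e_Q(g) = v_Q(g) \ge n > (r-1)\,w_Q(f)$ by the choice of $n$ and the hypothesis $g \equiv 0 \pmod{\md}$. Hence $w_0(\Tr_g f) < 1$, and being a non-negative integer it must vanish.

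The main obstacle is the careful bookkeeping around ramification: the Galois-closure construction needs to simultaneously realize $\Tr_g f$ as a sum of $n$ maps on $Y$ (so that \Cref{finallemma} bites) and scale the pole at $0$ exactly by $e_R(\tilde g)$, so that after cancellation the ramification of the intermediate cover $\pi$ drops out and the estimate reduces to the ratio $w_Q(f)/v_Q(g)$, where the hypothesis on $n$ finally forces strict inequality. A minor caveat is that the Galois closure requires $g$ to be separable; the purely inseparable case can be handled directly, using that the unipotent group $G$ is annihilated by some power of $p$.
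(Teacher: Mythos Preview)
Your argument is correct and follows essentially the same route as the paper's proof: pass to a Galois cover $Y$ of $\mathbf{P}_k^1$ dominating $X$, express $\Tr_g f$ pulled back to $Y$ as a sum of translates $f\circ\pi\circ\sigma_i$, apply \Cref{finallemma}, and compare ramification indices to force $w_0(\Tr_g f)<1$. Your bookkeeping is in fact a bit cleaner: you use the Galois invariance $e_{\sigma_i R}(\tilde g)=e_R(\tilde g)$ together with multiplicativity $e_{\sigma_i R}(\tilde g)=e_{\sigma_i R}(\pi)\,e_{\pi(\sigma_i R)}(g)$ to cancel $e_R(\tilde g)$ and reduce directly to the ratio $w_Q(f)/v_Q(g)$, whereas the paper phrases the same cancellation via a ``maximal $e_Q$'' (which is automatic by Galois invariance anyway).

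One small correction to your inseparability caveat: the fix is not that $G$ is annihilated by some $p$-power (the inseparable degree $p^k$ of $g$ could be smaller than the exponent of $G$). Rather, one keeps the general identity $\Tr_g f\circ\tilde g = p^k\sum_i f\circ\pi\circ\sigma_i$, where $p^k$ is the inseparable degree of $k(X)/k(t)$ and the $\sigma_i$ run over coset representatives for the separable part. Since \Cref{finallemma} bounds $w_Q(\sum f^\alpha)$ by $(r-1)\sup_\alpha w_Q(f^\alpha)$ independently of how many summands there are, the extra factor $p^k$ (i.e.\ repeating each summand $p^k$ times) leaves the estimate unchanged, and your argument goes through verbatim in the mixed case. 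This is exactly how the paper handles it.
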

\begin{proof}
    We use \Cref{crit}. Let $g$ be a non-constant rational function on $X$ such that $g\equiv0$ mod $\md$. Write $$f'\defeq \Tr_{g}f:\mathbf{P}_{k}^{1}\dashrightarrow G.$$ We will show that $w_{0}(f')<1$, which shows that all $v_{0}(f'_{ij})\geq0$, which means that $f'$ is regular at $0$. By \Cref{tracereg}, it is regular everywhere. So it is constant (this can be seen by factoring through the Jacobian $J$ of $\mathbf{P}_{k}^{1}$, which is a point).
    
    Let $Y$ be a normal covering of $\mathbf{P}_{k}^{1}$ dominating $X$, and let $\pi:Y\rightarrow X$ be the projection. For any point $P$ in any covering of $\mathbf{P}_{k}^{1}$, denote by $e_{P}$ its ramification index. Choose a point $Q\in Y$ in the preimage $\pi^{-1}(S)$ such that $e\defeq e_{Q}$ is maximal. Put $P=\pi(Q)\in S$, then since $e_{P}=v_{P}(g)\geq n$ (which is the definition of $g\equiv0$ mod $\md$), $$w_{Q}(f'\circ g\circ \pi)=ee_{P}w_{0}(f')\geq new_{0}(f').$$
    
    Let $p^{k}$ be the inseparable degree of the extension $k(X)|k(\mathbf{P}_{k}^{1})$, we have that
    
        $$f'\circ g\circ\pi=p^{k}\sum f\circ\pi\circ\sigma_{j}$$ where $\sigma_{j}$ are certain elements of the Galois group of $Y$.
    
    Using \Cref{finallemma}, and putting $Q_{j}=\sigma_{j}Q$, $P_{j}=\pi(Q_{j})$, we get $$w_{Q}(f'\circ g\circ\pi)\leq(r-1)\textrm{Sup}_{j}w_{Q}(f\circ\pi\circ\sigma_{j})$$ $$\leq(r-1)\textrm{Sup}_{j}w_{Q}(f\circ\pi)\leq(r-1)\textrm{Sup}_{j}(e_{Q_{j}}w_{P_{j}}(f)).$$
    
    If $P_{j}\notin S$, we have $w_{P_{j}}(f)=0$ (since in this case, $P_{j}$ is not a ``pole''). So we can only consider the terms in $(r-1)\textrm{Sup}_{j}(e_{Q_{j}}w_{P_{j}}(f))$ with $P_{j}\in S$. For these terms, we have $e_{Q_{j}}\leq e$ (by the maximality of $e$ among these terms), and $w_{P_{j}}(f)<n/(r-1)$ (by the definition of $n$). So $$w_{Q}(f'\circ g\circ\pi)\leq(r-1)\textrm{Sup}_{j}(e_{Q_{j}}w_{P_{j}}(f))\leq en.$$
    
    So, we have $w_{0}(f')<1$, which is what we wanted.
\end{proof}

\section{Over Perfect Fields}

Let $k$ be any perfect field. Let $f:X\dashrightarrow G$ be a rational map of $k$-varieties regular on $X-S$ (where $X$ is a smooth, projective, geometrically connected curve, and $G$ is a connected commutative algebraic group, both over $k$).

Previously we have shown that $f_{\bar{k}}:X_{\bar{k}}\dashrightarrow G_{\bar{k}}$ admits a modulus $\md'$ with support $S_{\bar{k}}$. Define $\bar{\md}$ to be the sum of all the conjugates of $\md$ in the galois group $\textrm{Aut}(\bar{k}|k)$. Since $\bar{\md}$ is invariant under $\textrm{Gal}(\bar{k}|k)$ and $k$ is perfect, $\bar{\md}$ corresponds to a modulus $\md$ on $X$ with support $S$.

Since $\bar{\md}\geq\md$, it is a modulus for $f_{\bar{k}}$. Let $g\in k(X)$ be a rational function such that $g\equiv1$ mod $\md$. Then, $g_{\bar{k}}$ is a rational function on $X_{\bar{k}}$ with $g_{\bar{k}}\equiv1$ mod $\bar{\md}$, which gives that $f_{\bar{k}}((g_{\bar{k}}))=0$, which gives $f((g))=0$, which shows that $\md$ is indeed a modulus for $f$. Thus Theorem 1 is true in the case of perfect base fields.

\chapter{Generalised Jacobians}

In this chapter we construct the generalised Jacobians \(\gj\) and prove their universal property. We follow the original construction by Rosenlicht given in [5] (and Serre's exposition given in [1]), which is analogous to Weil's construction of the Jacobian variety ([4], section 7), but translated into the language of schemes.

Fix a smooth, projective, geometrically connected curve $X$ over a perfect field $k$, a modulus $\md=\sum n_{P}P$ on $X$, and denote by $S$ the support of $\md$.

\section{Preliminaries}

\begin{definition}
    If $\mathrm{deg}(\md)=\sum n_{P}[k(P):k]\geq2$, define the curve $X_{\md}$ by the following: the set of $X_{\md}$ is $(X-S)\cup Q$ (crunching all points of $S$ into one point, $Q$), and the sheaf is defined by specifying the local rings: for each point $P$ in $X-S$, the local ring $\sheaf_{X_{\md},P}$ is the same as $\sheaf_{X,P}$. For $Q$, the local ring $\sheaf_{X_{\md},Q}$ is formed by the $k$-algebra generated by functions $f$ on $X$ such that $f\equiv0$ mod $\md$ (that is, $v_{P}(f)\geq n_{P}$ for each $P\in S$).
    
    If $\mathrm{deg}(\md)=0,1$, then define $X_{\md}=X$.
\end{definition}

\begin{remark}
    The arithmetic genus of $X_{\md}$ is equal to $g+\mathrm{deg}(\md)-1$ (if $\md\neq0$). We will denote $\pi$ for this quantity.
\end{remark}

\begin{definition}
    Any divisor $D$ on $X$ prime to $S$ corresponds to a line bundle $\lsheaf(D)$. Since $X-S$ is isomorphic to $X_{\md}-\{Q\}$, we can transport this sheaf to $X_{\md}-\{Q\}$. Define a sheaf $\lsheaf_{\md}$ on $X_{\md}$ by its local rings as follows: $\lsheaf_{\md}(D)_{Q}=\sheaf_{X_{\md},Q}$, and for $P\neq Q$, $\lsheaf_{\md}(D)_{P}=\lsheaf(D)_{Q}$. $\lsheaf_{\md}(D)$ will also be used to denote the pullback sheaf on $X$.
    
    Similarly to the nonsingular case, we put $$L_{\md}(D)=H^{0}(X_{\md},\lsheaf_{\md}(D)),l_{\md}(D)=\mathrm{dim}(L_{\md}(D)),$$ $$I_{\md}(D)=H^{1}(X_{\md},\lsheaf_{\md}(D)),i_{\md}(D)=\mathrm{dim}(I_{\md}(D)).$$
\end{definition}

The nonsingular and singular Riemann-Roch theorems are crucial throughout this chapter:

\begin{theorem}
    (Riemann-Roch) For every divisor $D$ on $X$, \[l(D)-i(D)=\mathrm{deg}(D)+1-g.\]
\end{theorem}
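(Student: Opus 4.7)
The plan is to pass to Euler characteristics. For a coherent sheaf $\mathcal{F}$ on $X$, define $\chi(\mathcal{F}) \defeq h^{0}(X,\mathcal{F}) - h^{1}(X,\mathcal{F})$; this is a finite integer because $X$ is projective and so both cohomology groups are finite-dimensional $k$-vector spaces (and higher cohomology vanishes on a curve). Since $l(D) - i(D) = \chi(\lsheaf(D))$, it suffices to prove $\chi(\lsheaf(D)) = \mathrm{deg}(D) + 1 - g$. I would establish this first for $D = 0$: because $X$ is smooth, projective, and geometrically connected over the perfect field $k$, we have $H^{0}(X,\sheaf_{X}) = k$, so $h^{0}(\sheaf_{X}) = 1$, and taking $g \defeq h^{1}(X,\sheaf_{X})$ as the definition of the genus gives $\chi(\sheaf_{X}) = 1 - g$, as required.

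Next, I would induct on the ``size'' of $D$ using the point-adding short exact sequence. For any closed point $P \in X$, tensoring the residue-field sequence $0 \to \lsheaf(-P) \to \sheaf_{X} \to k(P) \to 0$ (with $k(P)$ the skyscraper sheaf) by the locally free sheaf $\lsheaf(D)$ gives
\[0 \to \lsheaf(D-P) \to \lsheaf(D) \to k(P) \to 0.\]
Additivity of $\chi$ on short exact sequences, which follows from the long exact cohomology sequence together with finite-dimensionality, yields $\chi(\lsheaf(D)) - \chi(\lsheaf(D-P)) = \chi(k(P)) = [k(P):k]$. Since $\mathrm{deg}(P) = [k(P):k]$ in the convention used, the jump matches $\mathrm{deg}(D) - \mathrm{deg}(D-P)$ exactly. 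Writing any divisor as $D = \sum n_{P}P$ and inducting on $\sum |n_{P}|$ by adding or subtracting closed points one at a time then bootstraps the base case to all divisors.

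The main obstacle is really in verifying the prerequisites cleanly: finite-dimensionality of $H^{i}(X,\mathcal{F})$ for coherent $\mathcal{F}$ (Serre's theorem on projective schemes), exactness of the residue-field sequence at the skyscraper point (which uses that $\lsheaf(-P)$ cuts out the ideal sheaf of $P$), and the identification $H^{0}(X,\sheaf_{X}) = k$, which needs geometric connectedness and reducedness. I would also flag what this proof does \emph{not} give: it is only the numerical Riemann-Roch. The identification of $i(D)$ with sections of a twisted canonical sheaf (Serre duality) would require a separate residue-theoretic argument, which will be needed for the singular version of Riemann-Roch governing $L_{\md}(D)$ and $I_{\md}(D)$ later in this chapter.
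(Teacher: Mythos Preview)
Your argument is correct and is essentially the standard Euler-characteristic proof of the numerical Riemann--Roch theorem (as in Hartshorne, IV.1.3): verify $\chi(\sheaf_X)=1-g$ from the definitions, then propagate via the skyscraper sequence and additivity of $\chi$. The caveats you flag are the right ones, and none of them is problematic under the paper's standing hypotheses on $X$.

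Note, however, that the paper does not actually prove this theorem. It is stated in the ``Preliminaries'' section of Chapter~3 as a quoted background result (alongside the singular Riemann--Roch theorem), with no accompanying proof; the paper's introduction explicitly says that statements not essentially tied to the Weil-to-scheme translation are included only for completeness. So there is no ``paper's own proof'' to compare against: you have supplied a proof where the paper simply cites the result.
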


\begin{theorem}
    (Singular Riemann-Roch) For every divisor $D$ on $X$ prime to $S$, \[l_{\md}(D)-i_{\md}(D)=\mathrm{deg}(D)+1-\pi.\]
\end{theorem}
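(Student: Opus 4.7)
The plan is to deduce the singular Riemann-Roch formula on $X_\md$ from the ordinary one on $X$ by comparing cohomology along the natural partial normalization map $\pi : X \to X_\md$ (identity on $X-S$, collapsing $S$ onto $Q$). The central tool is the short exact sequence of coherent sheaves on $X_\md$
$$0 \to \sheaf_{X_\md} \to \pi_*\sheaf_X \to \mathcal{F} \to 0,$$
where $\mathcal{F}$ is a skyscraper sheaf concentrated at $Q$.

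First I would compute the length of $\mathcal{F}_Q$. The stalk $(\pi_*\sheaf_X)_Q$ is the semi-local ring $\bigoplus_{P\in S}\sheaf_{X,P}$, inside which $\sheaf_{X_\md,Q}$ sits as $k+\m_\md$, where $\m_\md$ is the ideal cut out by the conditions $v_P(f)\geq n_P$. Since $\bigoplus_{P\in S}\sheaf_{X,P}/\m_\md \cong \bigoplus_{P\in S}\sheaf_{X,P}/\m_P^{n_P}$ has $k$-dimension $\sum_{P\in S}n_P[k(P):k]=\mathrm{deg}(\md)$, while $\sheaf_{X_\md,Q}/\m_\md \cong k$, the quotient $\mathcal{F}_Q$ has $k$-dimension $\mathrm{deg}(\md)-1$. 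In particular, taking Euler characteristics of the displayed sequence recovers $\chi(\sheaf_{X_\md}) = (1-g)-(\mathrm{deg}(\md)-1) = 1-\pi$, consistent with the arithmetic genus formula of the preceding remark.

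Next, since $D$ is prime to $S$, the sheaf $\lsheaf_\md(D)$ is a genuine line bundle on $X_\md$, and its pullback to $X$ agrees with $\lsheaf(D)$ (because $\lsheaf(D)_P = \sheaf_{X,P}$ at each $P\in S$ and because at $Q$ the sheaf $\lsheaf_\md(D)$ is defined to be $\sheaf_{X_\md,Q}$). Tensoring the short exact sequence with $\lsheaf_\md(D)$, exactness is preserved since it is locally free; and by the projection formula one obtains
$$0 \to \lsheaf_\md(D) \to \pi_*\lsheaf(D) \to \mathcal{F} \to 0,$$
noting that $\mathcal{F}\otimes\lsheaf_\md(D) \cong \mathcal{F}$ since $\lsheaf_\md(D)$ is trivial at $Q$. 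Taking Euler characteristics on $X_\md$ (which equal those on $X$ for $\pi_*\lsheaf(D)$, as $\pi$ is finite) and invoking ordinary Riemann-Roch on $X$ gives
$$\chi(\lsheaf_\md(D)) = (\mathrm{deg}(D)+1-g) - (\mathrm{deg}(\md)-1) = \mathrm{deg}(D)+1-\pi,$$
which is the desired formula once we identify $\chi(\lsheaf_\md(D)) = l_\md(D) - i_\md(D)$.

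The main obstacle will be making the local identifications at $Q$ precise: checking that $\pi^*\lsheaf_\md(D) \cong \lsheaf(D)$ globally on $X$ (which relies crucially on $D$ being prime to $S$), and that the length computation of $\mathcal{F}_Q$ is correctly carried out over a possibly non-algebraically-closed $k$, where residue field degrees enter. Once these local verifications are done, the remainder is routine Euler-characteristic bookkeeping.
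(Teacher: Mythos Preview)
The paper does not actually prove this theorem: it states both the ordinary and the singular Riemann--Roch theorems as background results, citing Serre~[1] (Chapter~IV) for the singular case. So there is no ``paper's own proof'' to compare against; your proposal supplies a proof where the paper offers none.

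Your argument is correct and is the standard sheaf-theoretic deduction of Riemann--Roch on a singular curve from Riemann--Roch on its (partial) normalization. One small notational slip: the stalk $(\pi_*\sheaf_X)_Q$ is the \emph{semi-local ring} $\bigcap_{P\in S}\sheaf_{X,P}\subset k(X)$, not the direct sum $\bigoplus_{P\in S}\sheaf_{X,P}$; but your length computation is unaffected, since modulo $\m_\md$ the Chinese Remainder Theorem gives exactly the direct-sum decomposition you wrote, and the dimension count $\dim_k\mathcal{F}_Q=\deg(\md)-1$ goes through. The rest---projection formula, triviality of $\lsheaf_\md(D)$ at $Q$, additivity of $\chi$---is routine and correctly handled. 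You might also note that the edge cases $\deg(\md)\in\{0,1\}$ (where the paper sets $X_\md=X$) collapse to ordinary Riemann--Roch and are consistent with your formula.

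For context, Serre's treatment in~[1] works instead with r\'epartitions (ad\`eles) and explicit duality via differentials; your Euler-characteristic bookkeeping along the normalization is a more cohomological repackaging of the same content, and arguably cleaner for the purpose at hand.
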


\section{Construction}

Since we are dealing with a curve with a modulus (or equivalently, a curve with 1 singularity), we need a refinement of the notion of equivalence of divisors:

\begin{definition}\label{defmeq}
    Let \(D\) and \(D'\) be divisors prime to \(S\). \(D\) and \(D'\) are said to be \(\md\)-equivalent (written \(D\equiv_{\md}D'\)), iff there exists a rational function \(g\) on \(X\) satisfying:
    
    \begin{enumerate}
    \item[(a)] \(g\equiv 1\) mod \(\md\) (suppressed iff \(\md=0\))
    
    \item[(b)] \((g)=D-D'\)
    \end{enumerate}

\end{definition}

\begin{lemma}\label{lemmameq}
    Suppose \(\md\neq 0\). Let \(D\) be a divisor prime to \(S\). The set of effective divisors \(D'\) \(\md\)-equivalent to \(D\) is bijective to \(\mathbf{P}(L_{\md}(D))\setminus \mathbf{P}(L(D-\md))\).
\end{lemma}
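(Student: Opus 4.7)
The natural map to try is
\[
[f] \mapsto D + (f),
\]
and the main preparatory step is unpacking the two spaces involved. Using the description of $\sheaf_{X_{\md},Q}$ as the $k$-algebra generated by functions $\equiv 0$ mod $\md$, one sees that a nonzero $f \in k(X)$ lies in $L_{\md}(D)$ iff $(f)+D \geq 0$ on $X-S$ and $f \equiv c_f$ mod $\md$ for some scalar $c_f \in k$; and, since $D$ is prime to $S$, $L(D-\md)$ sits inside $L_{\md}(D)$ as precisely the subspace on which $c_f = 0$. This is the only step that requires real care, because it translates the somewhat opaque local ring $\sheaf_{X_{\md},Q}$ into a workable pole-plus-congruence condition.

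Given this, for $f \in L_{\md}(D) \setminus L(D-\md)$ I put $D' \defeq D + (f)$. Since $c_f \neq 0$ we have $v_{P}(f)=0$ at each $P\in S$, so $D'$ is prime to $S$, and combined with the pole condition on $X-S$ it is effective. To exhibit $D' \equiv_{\md} D$, take $g \defeq c_f/f$: then $(g)=-(f)=D-D'$, and writing $1-g = (f-c_f)/f$ with $v_P(f-c_f) \geq n_P$ and $v_P(f)=0$ at $P\in S$ shows $g \equiv 1$ mod $\md$. Injectivity after projectivization is immediate: $D+(f_1)=D+(f_2)$ forces $(f_1/f_2)=0$, i.e.\ $f_1 \in k^{*}f_2$.

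For surjectivity, given effective $D' \equiv_{\md} D$ with witness $g$ (so $(g)=D-D'$, $g\equiv 1$ mod $\md$), put $f \defeq 1/g$. Then $(f)+D = D'$ is effective, and at $P\in S$ the same short computation gives $f \equiv 1$ mod $\md$, so $f \in L_{\md}(D)$ with $c_f = 1 \neq 0$, hence $[f]$ lies in $\mathbf{P}(L_{\md}(D))\setminus\mathbf{P}(L(D-\md))$. One should also note, for the statement to parse, that such a $D'$ is automatically prime to $S$: $g \equiv 1$ mod $\md$ forces $v_P(g)=0$ at $P \in S$, so $v_P(D')=v_P(D)=0$.

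The only genuinely content-bearing step is the first one---once the dictionary ``$L_{\md}(D)$ equals pole-bounded functions that are constant mod $\md$, while $L(D-\md)$ equals those with the constant equal to $0$'' is set up, the rest is routine divisor bookkeeping and the witness $g=c_f/f$ essentially writes itself.
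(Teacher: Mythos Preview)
Your proof is correct and follows the same route as the paper: both set up the bijection via the witness function of $\md$-equivalence, with your $f=1/g$ playing the role of the paper's $g$. Your version is the more careful of the two---you explicitly unpack $L_{\md}(D)$ as ``pole-bounded on $X-S$ and congruent to a constant mod $\md$'' and orient the map correctly as $[f]\mapsto D+(f)$, whereas the paper's three-sentence sketch writes $g\in L_{\md}(D)$ when, under the standard convention $(f)+D\geq 0$, it is really $1/g$ that lies there.
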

\begin{proof}
    An effective divisor \(D'\) \(\md\)-equivalent to \(D\) corresponds to a rational function \(g\) satisfying the conditions in \Cref{defmeq}. Such a \(g\in \sheaf'_{Q}\) in \(X_{\md}\), so \(g\in L_{\md}(D)\). In addition to \(g\in L_{\md}(D)\), we also need \(g\) to be nowhere zero on \(S\), which in \(L_{\md}(D)\), means that \(g\) is not in \(L(D-\md)\).
\end{proof}

We now begin to construct a rational composition law on \(X^{\pi}\) using the singular Riemann-Roch theorem.

Let $\md$ be a divisor on $X$, and assume that $k$ is algebraically closed, thus there exists a point $P$ outside the support $S$ of $\md$ (we will make this assumption for the first 3 sections of this chapter, and deal with the general case in section 4). Fix this point, which will act as our ``base point''. 

\begin{lemma}\label{minus1}
    
        Let \(D\) be a divisor on \(X\) prime to \(S\) such that \(i_{\md}(D)>0\), then there is a nonempty open set \(U\) of \(X-S\) such that \(i_{\md}(D+Q)=i_{\md}(D)-1\) for all \(Q\) in \(U\), and \(i_{\md}(D+Q)=i_{\md}(D)\) for \(Q\) not in \(U\).

\end{lemma}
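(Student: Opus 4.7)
The plan is to combine the singular Riemann--Roch theorem with a codimension computation to establish the dichotomy $i_{\md}(D+Q) \in \{i_{\md}(D), i_{\md}(D)-1\}$, and then to prove nonemptiness of the favourable locus by a Serre-vanishing argument applied to iteratively enlarged divisors.

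First I would apply singular Riemann--Roch to $D$ and $D+Q$ and subtract, yielding
$$l_{\md}(D+Q) - l_{\md}(D) = i_{\md}(D+Q) - i_{\md}(D) + 1.$$
Since $Q \notin S$ is a smooth point of $X_{\md}$, the natural inclusion $L_{\md}(D) \hookrightarrow L_{\md}(D+Q)$ has codimension at most one (the local ring at $Q$ is a DVR), so $l_{\md}(D+Q) - l_{\md}(D) \in \{0, 1\}$ and correspondingly $i_{\md}(D+Q) - i_{\md}(D) \in \{-1, 0\}$, with the two cases matched: the $l$-increment is $+1$ exactly when the $i$-decrement is $0$. Set $U = \{Q \in X - S : l_{\md}(D+Q) = l_{\md}(D)\}$; by the matched alternatives, $U$ is precisely the locus on which $i_{\md}$ drops by one. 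Openness of $U$ in $X - S$ follows from upper semicontinuity of $h^{0}$ applied to the flat family of line bundles $\lsheaf_{\md}(D+Q)$ on $X_{\md}$ parametrised by $Q \in X - S$ (equivalently, $U^{c}$ is closed as a rank-one-jump locus).

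For nonemptiness I argue by contradiction. Suppose $U$ is empty, so that for every $Q \in X - S$ lying outside the support of $D$ there exists $f_{Q} \in L_{\md}(D+Q) \setminus L_{\md}(D)$; necessarily $v_{Q}(f_{Q}) = -1$ while $v_{R}(f_{Q}) \geq 0$ for every other $R \in X-S$ outside the support of $D$. Pick distinct $Q_{1}, \ldots, Q_{n} \in X - S$ avoiding the support of $D$; the functions $f_{Q_{1}}, \ldots, f_{Q_{n}}$ lie in $L_{\md}(D + Q_{1} + \cdots + Q_{n})$ and are linearly independent modulo $L_{\md}(D)$, since their polar parts are supported at distinct points of $X-S$. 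Hence $l_{\md}(D + Q_{1} + \cdots + Q_{n}) \geq l_{\md}(D) + n$. By Serre vanishing for coherent sheaves on the projective curve $X_{\md}$, for $n$ sufficiently large we have $i_{\md}(D + Q_{1} + \cdots + Q_{n}) = 0$, whereupon singular Riemann--Roch reads $l_{\md}(D + Q_{1} + \cdots + Q_{n}) = l_{\md}(D) + n - i_{\md}(D)$. Comparing the two gives $i_{\md}(D) \leq 0$, contradicting the hypothesis $i_{\md}(D) > 0$.

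The main obstacle is the nonemptiness step: the Riemann--Roch dichotomy is essentially bookkeeping, but ruling out the pathology where $l_{\md}$ always grows on adding a point requires a genuinely global input, supplied here by Serre vanishing forcing $i_{\md}$ eventually to zero as the divisor is enlarged. A cleaner alternative would be to invoke Serre duality on $X_{\md}$: a nonzero section of $H^{0}(X_{\md}, \omega_{X_{\md}} \otimes \lsheaf_{\md}(-D))$ (nonzero because $i_{\md}(D)>0$) has only finitely many zeros on $X$, and the complement of its zero locus in $X-S$ immediately lies in $U$. This route is more transparent, but it presupposes the theory of the dualising sheaf on the singular curve $X_{\md}$, which has not yet been set up in the paper.
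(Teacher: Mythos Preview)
Your argument is correct. The dichotomy via Riemann--Roch plus the codimension-one inclusion is clean, openness via semicontinuity is legitimate (and the paper itself uses semicontinuity later, in \Cref{compositionlemma}), and the linear-independence step for nonemptiness is sound: the $f_{Q_i}$ have distinct simple poles among the $Q_j$, so no nontrivial combination lies in $L_{\md}(D)$.

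However, the paper does \emph{not} argue this way. It takes precisely the route you sketch in your final paragraph as the ``cleaner alternative'': it invokes singular Serre duality (citing Serre's book, [1], chapter~4, no~10) to identify $I_{\md}(D+Q)^{*}$ with the space of differentials in $\Omega_{\md}(D)$ vanishing at $Q$, and then takes $U$ to be the complement of the common zero locus of $\Omega_{\md}(D)$ (minus points in the support of $D$). Nonemptiness is then immediate, since $i_{\md}(D)>0$ furnishes a nonzero differential with only finitely many zeros. So your hesitation about the dualising-sheaf machinery ``not yet being set up in the paper'' is misplaced: the paper simply imports it.

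What each approach buys: the paper's proof is a two-line application of duality and is conceptually transparent---$U$ is visibly open and nonempty as the nonvanishing locus of a space of differentials. Your proof trades this for Riemann--Roch and semicontinuity, which are lighter prerequisites in principle, but your nonemptiness step leans on Serre vanishing for line bundles of large degree on the singular curve $X_{\md}$. The most natural proof of that vanishing (a uniform degree bound, as opposed to twists by a fixed ample) itself goes through Serre duality on $X_{\md}$, so your route does not fully sidestep the duality theory you were trying to avoid. Your argument remains valid, but the apparent gain in elementarity is smaller than it looks.
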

\begin{proof}
    We follow the proof of Lemma 5.2 in [4]. If \(Q\) is not in the support of \(D\), then by the singular Serre duality theorem (see the definitions of differentials on a singular curve and the statement of this theorem in [1], chapter 4, no 10), \[I_{\md}(D+Q)^{*}\cong \Omega_{\md}(D+Q).\] So, $I_{\md}(D+Q)^{*}$ can be identified with the subspace of \(\Omega_{\md}(D)\) of differentials with a zero at \(Q\). We thus take \(U\) to be the complement of the zero set of \(I_{\md}(D)\), minus a subset of the support of D.
\end{proof}

\begin{lemma}\label{lemmameq2}
    For every \(r\leq\pi\), there is an open set \(U\) of \((X-S)^{r}\) such that \(l_{\md}(\sum P_{i})=1\) for all \((P_{1},...,P_{r})\) in \(U\).
\end{lemma}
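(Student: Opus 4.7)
The plan is to induct on $r$ from $0$ up to $\pi$, with the inductive step driven by \Cref{minus1} and openness coming from upper semicontinuity of cohomology. For the base case $r = 0$ the empty sum is the zero divisor; since $X_\md$ is a projective, connected, reduced curve over algebraically closed $k$, we have $L_\md(0) = H^0(X_\md, \sheaf_{X_\md}) = k$, so $l_\md(0) = 1$ and we may take $U = (X-S)^0$.

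For the inductive step, suppose $r < \pi$ and we already have a nonempty open $U_r \subset (X-S)^r$ on which $l_\md(\sum_{i=1}^r P_i) = 1$. By the singular Riemann-Roch theorem, on $U_r$ we have $i_\md(\sum P_i) = \pi - r > 0$, so picking any $(P_1,\ldots,P_r) \in U_r$ and applying \Cref{minus1} to $D = \sum P_i$ would produce a nonempty open $V \subset X-S$ with $i_\md(D + Q) = \pi - r - 1$ for $Q \in V$, whence $l_\md(D + Q) = 1$ by Riemann-Roch. This exhibits at least one point of $(X-S)^{r+1}$ at which $l_\md = 1$.

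To promote this single point to an open neighborhood I would consider the relative effective Cartier divisor $\mathcal{D} = \sum_{i=1}^{r+1} \Delta_i$ on $X_\md \times (X-S)^{r+1}$, where $\Delta_i$ is the graph of the $i$-th projection $(X-S)^{r+1} \to X-S \hookrightarrow X_\md$; this is Cartier because its support is disjoint from the singular point $Q$ and lies in the smooth locus $(X-S) \times (X-S)^{r+1}$. The associated line bundle $\sheaf(\mathcal{D})$ is flat over $(X-S)^{r+1}$ with fibre over $(P_1,\ldots,P_{r+1})$ equal to $\lsheaf_\md(\sum P_i)$, so upper semicontinuity of cohomology makes $U_{r+1} \defeq \{t : l_\md(\sum P_i(t)) \leq 1\}$ open; since $l_\md \geq 1$ always (the constant $1$ lies in $L_\md$) and $U_{r+1}$ is nonempty by the previous step, this is the desired open set.

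The main obstacle I anticipate is verifying that the paper's definition of $\lsheaf_\md$ on local rings globalises cleanly to a line bundle on $X_\md \times (X-S)^{r+1}$ with the expected fibres, so that standard upper semicontinuity applies directly to $l_\md$ rather than merely to its nonsingular analog. One alternative would be to promote \Cref{minus1} itself to a relative statement, constructing the open neighborhood by hand and bypassing semicontinuity entirely.
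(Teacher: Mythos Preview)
Your proposal is correct and follows essentially the same approach as the paper: iterate \Cref{minus1} starting from $i_{\md}(0)=\pi$ and read off $l_{\md}=1$ via the singular Riemann--Roch theorem. The only difference is one of explicitness: the paper simply writes ``by applying \Cref{minus1} $r$ times, we obtain an open set $U$ of $(X-S)^{r}$'', leaving implicit why the iterated pointwise statement yields an open set in the product, whereas you spell this out via upper semicontinuity of $h^{0}$ for the universal family on $X_{\md}\times (X-S)^{r+1}$ (the same semicontinuity argument the paper invokes later in the proof of \Cref{compositionlemma}). Your concern about globalising $\lsheaf_{\md}$ is legitimate but not specific to this lemma, since the paper relies on the same construction elsewhere.
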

\begin{proof}
    By the singular Riemann-Roch theorem, \(i_{\md}(0)=\pi\). By applying \Cref{minus1} \(r\) times, we obtain an open set \(U\) of \((X-S)^{r}\) such that \[i_{\md}(\sum P_{i})=\pi-r\] for all \((P_{1},...,P_{r})\) in \(U\). Again by the singular Riemann-Roch theorem, \[l_{\md}(\sum P_{i})=r+(1-\pi)+(\pi-r)=1\] for all \((P_{1},...,P_{r})\) in \(U\).
\end{proof}

\begin{lemma}\label{compositionlemma}
    Let $r$ be a positive integer.
    
    (a) Assume $r\geq\pi$. Let $D'$ a divisor of degree $\pi-2r$ on $X$ prime to $S$ such that $-D'$ is effective. There exists an open set \(U\) in \((X-S)^{(r)}\times (X-S)^{(r)}\) such that for all field extensions \(K|k\) and all \((N,M)\) in \(U(K)\), there exists a unique effective divisor \(\md\)-equivalent to \(N+M+D'\).
    
    (b) Let $D'$ be an effective divisor of degree $\pi$ on $X$ prime to $S$. There exists an open set \(V\) in \((X-S)^{(r)}\times (X-S)^{(r)}\) such that for all field extensions \(K|k\) and all \((N,M)\) in \(V(K)\), there exists a unique effective divisor \(\md\)-equivalent to \(N-M+D'\).
\end{lemma}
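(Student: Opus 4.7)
In both parts, write $E = N + M + D'$ (resp.\ $E = N - M + D'$); this is a divisor on $X$ prime to $S$ of degree $\pi$. By the singular Riemann-Roch theorem, $l_\md(E) \geq 1$ always, and by \Cref{lemmameq} the number of effective divisors $\md$-equivalent to $E$ equals $|\mathbf{P}(L_\md(E)) \setminus \mathbf{P}(L(E-\md))|$, which is exactly $1$ precisely when $l_\md(E) = 1$ and $l(E-\md) = 0$. A useful observation is that when $E$ is itself effective and prime to $S$, the constant function $1$ lies in $L_\md(E) \setminus L(E-\md)$, so $l_\md(E) = 1$ automatically forces $l(E-\md) = 0$ in that case.

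My plan is to produce a specific $(N_0, M_0)$ in the parameter space at which $E$ equals a generic $F_0 \in (X-S)^{(\pi)}$ with $l_\md(F_0) = 1$, obtained by applying \Cref{lemmameq2} with $r = \pi$; the effectivity observation above then gives $l(F_0 - \md) = 0$ for free. For (a), since $r \geq \pi$ and $-D'$ is effective of degree $2r - \pi$, the sum $F_0 + (-D')$ is effective of degree $2r$ on $X - S$, and can be split (non-canonically) as $N_0 + M_0$ with $N_0, M_0 \in (X-S)^{(r)}$; then $N_0 + M_0 + D' = F_0$. For (b), in the regime $r \geq \pi$ where this lemma will be applied, choose $E_0 \in (X-S)^{(r-\pi)}$ disjoint from $D'$ and $F_0$, and set $N_0 = F_0 + E_0$, $M_0 = D' + E_0$; both lie in $(X-S)^{(r)}$ and $N_0 - M_0 + D' = F_0$.

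I then conclude by semi-continuity. The sheaves $\lsheaf_\md(N \pm M + D')$ on $X_\md$ and $\lsheaf(N \pm M + D' - \md)$ on $X$ form flat families of line bundles parameterized by $(X-S)^{(r)} \times (X-S)^{(r)}$ (via the universal effective divisors on the symmetric products, combined with the transport construction in the definition of $\lsheaf_\md$), so Grauert's semi-continuity theorem shows that the loci $\{l_\md(E) \leq 1\}$ and $\{l(E - \md) \leq 0\}$ are open. Their intersection $U$ (resp.\ $V$) contains $(N_0, M_0)$, so it is non-empty and open, and by \Cref{lemmameq} each of its geometric points admits a unique effective $\md$-equivalent divisor.

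The principal obstacle is the semi-continuity setup: one must verify that the prescription $\lsheaf_\md(D)_Q = \sheaf_{X_\md, Q}$ combined with the pullback identification $\lsheaf_\md(D)_P = \lsheaf(D)_P$ for $P \neq Q$ really yields a coherent sheaf that varies in a flat family as $D$ ranges over $N \pm M + D'$. Granting this, the cohomological loci are open and the explicit constructions above place $(N_0, M_0)$ inside them, finishing the proof.
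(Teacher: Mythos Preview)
Your argument is correct and follows the same architecture as the paper's proof: translate ``unique effective representative'' via \Cref{lemmameq} into the two conditions $l_{\md}(E)=1$ and $l(E-\md)=0$, use semicontinuity to see these define open loci, and exhibit a single point $(N_0,M_0)$ where both hold by pulling a generic $F_0\in (X-S)^{(\pi)}$ with $l_{\md}(F_0)=1$ from \Cref{lemmameq2}. The paper likewise reduces (a) to the same two conditions, proves openness by semicontinuity, and uses \Cref{lemmameq2} to produce a point satisfying (1).

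The one substantive difference is your treatment of condition~(2). You observe that for $E$ effective and prime to $S$ one has $L(E-\md)\subsetneq L_{\md}(E)$ (the constant $1$ lies in the latter but not the former when $\md\neq 0$), so $l_{\md}(E)=1$ forces $l(E-\md)=0$ automatically. The paper instead runs a separate argument for~(2): assuming $l(D+D'-\md)>0$, it applies \Cref{minus1} (with $\md=0$) $\pi$ times to strip points off $D$ and reach a contradiction with Riemann--Roch on a divisor of negative degree. Your route is shorter, avoids the somewhat delicate bookkeeping of that step, and makes clear why the two conditions are not independent at an effective representative.

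One caveat on (b): the lemma as stated imposes no lower bound on $r$, and your construction $N_0=F_0+E_0$, $M_0=D'+E_0$ requires $r\geq\pi$. The paper says only that ``the proof of (b) is analogous'' and supplies no details, and (b) is invoked solely with $r=\pi$ in \Cref{thmrgl}, so nothing downstream is affected; but strictly speaking neither argument covers $r<\pi$.
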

\begin{proof}
    We prove (a) by finding an open set \(U'\) in \((X-S)^{(2r)}\) such that for all field extensions \(K|k\) and all \((D)\) in \(U(K)\), there exists a unique effective divisor \(\md\)-equivalent to \(D+D'\), and then take $U$ to be the preimage of \(U'\) under the quotient map \((X-S)^{(r)}\times (X-S)^{(r)}\rightarrow (X-S)^{(2r)}\). The proof of (b) is analogous.
    
    By \Cref{lemmameq}, we need to find an open set \(U'\) in \((X-S)^{(2r)}\) such that for all field extensions \(K|k\) and all \((D)\) in \(U(K)\):
    \begin{enumerate}
        \item[(1)]\(l_{\md}(D+D')=1\)
        \item[(2)]\(l(D+D'-\md)=0\) (suppressed iff \(\md= 0\)).
    \end{enumerate}
    We will find an open set for each condition, and then take their intersection.
    
    For (1), we adapt the proof of Lemma 7.1 in [4]. Let \(D_{can}\) be the canonical relative effective divisor on \[(X-S)\times (X-S)^{(2r)}/(X-S)^{(2r)}.\] By the singular Riemann-Roch theorem, \(L_{\md}(D+D')\geq 1\) for all \(D\in (X-S)^{(2r)}\). So the semicontinuity theorem shows that the subset \(V\) of \((X-S)^{(2r)}\) of points \(t\) such that \(l_{\md}((D_{can})_{t}+D')=1\) is open.
    
    To show that \(V\) is nonempty, we obtain from \Cref{lemmameq2} effective divisors \(D\) of degree \(\pi\) with \[l_{\md}((D-D')+D')=l_{\md}(D)=1.\] 
    
    For (2), assume that \(\md\neq 0\). Suppose for the sake of contradiction that \[l(D+D'-\md)>0.\] The degree of \(D+D'-\md\) is \(g-1\), and the (nonsingular) Riemann-Roch theorem shows that \[i(D+D'-\md)=l(D+D'-\md)>0.\] Applying Lemma 3.1.3 (a) (with \(\md=0\)) \(\pi\) times, we obtain an open set \(V'\) of \((X-S)^{(2r)}\) such that for all $(D)\in V'(K)$, \[i(D+D'-\md)=i(D''+D'-\md)+\pi,\] where \(D-D''\) is an effective divisor of degree \(\pi\). So \[i(D''+D'-\md)>\pi.\] But \(D''+D'-\md\) has negative degree, so \[l(D''+D'-\md)=0,\] and the Riemann-Roch theorem gives \[i(D''+D'-\md)=\pi,\] a contradiction.
    
    Now take \(U'=V\cap V'\), and \(U\) to be the preimage of \(U'\) under the quotient map \((X-S)^{(r)}\times (X-S)^{(r)}\rightarrow (X-S)^{(2r)}\).
\end{proof}

\begin{remark}\label{compositionremark}
    By the semicontinuity theorem, the set of $D$ satisfying conditions (1) and (2) in the above proof forms an open set. That is, not only do we get an open set that does the job, we further have that all points $(N,M)$ such that there exists a unique effective divisor $\md$-equivalent to $N+M+D'$ (resp. $N-M+D'$) form an open set.
\end{remark}

\begin{theorem}\label{thmrgl}
    There exists a unique rational composition law \[m:X^{(\pi)}\times X^{(\pi)} \dashrightarrow X^{(\pi)}\] whose domain of definition contains any open set \(U\) given by \Cref{compositionlemma} (a), and for all field extensions \(K|k\) and all \(D,D')\) in \(U(K)\), \[m(D,D')\sim_{\md} D+D'-\pi P.\]
    
    Moreover, \(m\) is a birational group law, that is, it is associative, and the maps \[(D,D')\mapsto (D,m(D,D')),(D,D')\mapsto (m(D,D'),D')\] are birational.
\end{theorem}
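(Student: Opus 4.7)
The plan is to define $m$ directly on the open set $U$ from \Cref{compositionlemma}(a), applied with $r=\pi$ and $D'=-\pi P$ (so $\deg D' = \pi-2\pi$ and $-D' = \pi P$ is effective): for each field extension $K|k$ and each $(N,M)\in U(K)$, let $m(N,M)$ be the unique effective divisor $\md$-equivalent to $N+M-\pi P$. To promote this set-theoretic assignment to a morphism $U\to X^{(\pi)}$, I would work with the canonical relative effective divisor $D_{can}$ on $(X-S)\times U$ over $U$, form the coherent sheaf $\lsheaf_\md(D_{can} + p^*(-\pi P))$ on the relative curve, and invoke cohomology and base change: because $l_\md$ equals $1$ at every geometric point of $U$, the pushforward is a line bundle whose nonzero section (up to scalar) cuts out a relative Cartier divisor giving the desired morphism. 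Uniqueness of $m$ as a rational map is then immediate since $X^{(\pi)}$ is separated and $U$ is dense.

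To prove associativity, I would work on the dense open subset of $X^{(\pi)}\times X^{(\pi)}\times X^{(\pi)}$ where both $m(m(D_1,D_2),D_3)$ and $m(D_1,m(D_2,D_3))$ are defined. On this open set both are effective divisors of degree $\pi$ that are $\md$-equivalent to $D_1+D_2+D_3-2\pi P$. An iterated application of the Riemann-Roch plus semicontinuity argument in \Cref{compositionlemma} (now with three symmetric powers, or equivalently by composing twice through $U$) shows that generically such an effective representative is unique. Hence the two rational maps agree on a dense open set, so they agree wherever they are both defined.

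For birationality of $(D,D')\mapsto(D, m(D,D'))$, I would use \Cref{compositionlemma}(b) applied with $r=\pi$ and the effective divisor $D'=\pi P$ to produce a rational map $s:X^{(\pi)}\times X^{(\pi)}\dashrightarrow X^{(\pi)}$ sending $(D,E)$ to the unique effective divisor $\md$-equivalent to $E-D+\pi P$. On a dense open set $m(D, s(D,E))$ and $E$ are both effective of degree $\pi$ and $\md$-equivalent to $D+(E-D+\pi P)-\pi P=E$, so by uniqueness they coincide; a symmetric argument gives $s(D,m(D,D'))=D'$. Therefore $(D,D')\mapsto(D,m(D,D'))$ admits $(D,E)\mapsto(D,s(D,E))$ as a rational two-sided inverse, hence is birational; the other translation map is handled symmetrically (using \Cref{compositionlemma}(b) in the other slot).

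The step I expect to be the main obstacle is upgrading the pointwise bijection of \Cref{compositionlemma} to an actual morphism of schemes on $U$. The set-theoretic description only gives a map on $K$-points for each extension $K|k$; getting a genuine morphism requires either an argument through the Picard scheme of $X_\md$ or the explicit construction via the canonical relative divisor and cohomology-and-base-change sketched above, plus a check that the resulting morphism truly realises the pointwise rule on all geometric points (not merely on $k$-points), which is what lets us later compare $m(m(-,-),-)$ with $m(-,m(-,-))$ as rational maps rather than only as functions on closed points.
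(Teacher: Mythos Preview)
Your proposal is correct and follows essentially the same approach as the paper: define $m$ via \Cref{compositionlemma}(a) with $r=\pi$ and $D'=-\pi P$, upgrade the pointwise rule to a morphism using the canonical relative divisor and the fact that the pushforward of $\lsheaf_{\md}(D_{can}-\pi P)$ is a line bundle (the paper phrases this via the Yoneda lemma with arbitrary $T$-points), deduce associativity from uniqueness of the effective representative of $D_1+D_2+D_3-2\pi P$, and obtain the birational inverse from \Cref{compositionlemma}(b) with $D'=\pi P$. The only cosmetic difference is that the paper invokes commutativity of $m$ to handle the second translation map rather than re-running \Cref{compositionlemma}(b) in the other slot.
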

\begin{proof}
    We adapt the proof of Proposition 7.2 in [4]. With \Cref{compositionlemma} (a) (taking $r=\pi$), it is clear how to define \(m\) and prove its uniqueness. In order to prove the existence of \(m\), we just need to check that it is a map of varieties (schemes). To do this, we use the Yoneda lemma.
    
    Let \(T\) be an integral \(k\)-scheme. An element of \(U(T)\) is a pair of relative effective divisors \((D,D')\) on \((X-S)\times T/T\) such that, for all \(t\in T\), \(l_{\md}(D_{t}+D'_{t}-\pi P)=1\), and \(l(D_{t}+D'_{t}-\pi P-\md)=0\) (suppressed iff \(\md=0\)). Write \[\lsheaf \defeq\lsheaf_{\md}(D+D'-\pi P\times T),\] and \(q:(X-S)\times T\rightarrow T\) for the projection map. ([3], 4.2d) shows that \(q_{*}(\lsheaf)\) is a line bundle on \(T\). The canonical map \(q^{*}q_{*}\lsheaf \rightarrow \lsheaf\) tensored with \((q^{*}q_{*}\lsheaf)^{-1}\) gives a canonical global section \[s:\sheaf_{(X-S)\times T}\rightarrow \lsheaf \otimes (q^{*}q_{*}\lsheaf)^{-1},\] which determines a relatve effective divisor \(m(D,D')\) of degree \(\pi\) on \((X-S)\times T/T\). This construction is functorial, so by the Yoneda lemma, it is represented by a map of schemes. By considering the case where \(T\) is the spectrum of a field, we get \(m(D,D')\sim_{\md} D+D'-\pi P\).
    
    It remains to prove that \(m\) is a birational group law. Associativity follows from the observation that \[m(D,m(D',D'')),m(m(D,D'),D'')\sim_{\md} D+D'+D''-2\pi P.\]
    
    A similar construction using \Cref{compositionlemma} (b) gives a map \(r:X^{(\pi)}\dashrightarrow X^{(\pi)}\) such that \((p,r)\) (\(p\) denotes the projection of \(X^{(\pi)}\times X^{(\pi)}\) onto its first factor) is a birational inverse to \[(D,D')\mapsto (D,m(D,D')).\] By the commutativity of the composition law (inherited from the commutativity of addition of divisors), \[(D,D')\mapsto (m(D,D'),D')\] also has a birational inverse. 
\end{proof}

Now we cite a theorem of Weil ([6]), which allows us to pass from a birational group law to an algebraic group.

\begin{theorem}
    For any variety \(V\) over \(k\) with a birational group law, there exists a unique (up to unique isomorphism) algebraic group \(G\) over \(k\) and a birational map \(f:V\dashrightarrow G\) satisfying \(f(ab)=f(a)f(b)\) wherever \(ab\) is defined.
\end{theorem}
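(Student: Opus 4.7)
The plan is to follow Weil's original gluing construction: build $G$ by patching together copies of $V$ along birational maps furnished by the group law. By hypothesis the rational maps $\alpha,\beta : V\times V \dashrightarrow V\times V$ given by $\alpha(a,b)=(a,m(a,b))$ and $\beta(a,b)=(m(a,b),b)$ are birational. Restricting $\alpha$ over a generic $a \in V$, I would first deduce that for $a$ in a dense open $V^\circ \subset V$ the left translation $L_a : V \dashrightarrow V$, $x \mapsto m(a,x)$, is a birational automorphism of $V$, and symmetrically for right translations. For each $a \in V^\circ$ introduce a copy $V_a$ of $V$ (thought of as ``$V$ translated by $a$''), and along each pair $a,b \in V^\circ$ glue $V_a$ to $V_b$ via the birational transition map $L_b \circ L_a^{-1} : V \dashrightarrow V$ on the open locus where both sides are defined.

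The next step is to verify the cocycle condition on triple overlaps $V_a \cap V_b \cap V_c$. This is exactly where the associativity of $m$ enters: associativity (together with the two birationality hypotheses) forces the identity $L_c \circ L_b^{-1} \circ L_b \circ L_a^{-1} = L_c \circ L_a^{-1}$ to hold rationally, so the gluings are consistent. The upshot is a scheme $G$ of finite type over $k$ together with birational open embeddings $V \cong V_a \hookrightarrow G$ for each $a \in V^\circ$; fixing one such chart gives the required birational map $f : V \dashrightarrow G$, and by construction $f(ab) = f(a) f(b)$ wherever the left-hand side is defined.

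I would then upgrade the birational group law on $G$ to an everywhere-defined morphism $G \times G \to G$. For any closed points $g, g' \in G$ one can choose generic $a, b \in V^\circ$ so that $g \in V_a$, $g' \in V_b$, and the image $m(g,g')$ lies in $V_{m(a,b)}$; in these local charts multiplication is visibly regular, so multiplication is regular at $(g,g')$. The analogous argument using $\beta$ regularises inversion. Separatedness of $G$ follows because the diagonal is, in any pair of charts, cut out by the graph of a birational automorphism that the group law extends to a morphism. For uniqueness, any two solutions $(G,f),(G',f')$ yield a birational map $G \dashrightarrow G'$ that respects multiplication wherever defined; since a homomorphism of algebraic groups regular on a dense open is everywhere regular, and a birational homomorphism of algebraic groups is an isomorphism, this extends uniquely to an isomorphism $G \xrightarrow{\sim} G'$.

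The main obstacle is the transition from the birational situation to a regular, separated, globally defined group law: verifying the cocycle condition on triple overlaps and then showing that the tentative multiplication extends to all of $G \times G$ both rest on delicate applications of associativity combined with the birationality of $\alpha$ and $\beta$. This is the technical heart of Weil's theorem, and I would invoke Weil's paper [6] for the details rather than reproduce the entire argument here.
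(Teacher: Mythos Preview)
The paper does not prove this theorem at all: it is stated and attributed to Weil with a citation to [6], and then immediately used to define the generalised Jacobian. So there is no ``paper's own proof'' to compare your proposal against.

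Your sketch is a reasonable outline of Weil's original gluing construction, and you correctly identify the two genuinely delicate points --- consistency of the transition maps on triple overlaps, and the extension of the birational multiplication to an everywhere-regular morphism on $G\times G$. Since you yourself defer those details to [6] at the end, your proposal amounts to the same thing the paper does: cite Weil. If you wanted to go further than the paper, you would have to actually carry out those two steps; the phrases ``choose generic $a,b\in V^\circ$ so that $g\in V_a$, $g'\in V_b$'' and ``separatedness follows because the diagonal is cut out by the graph of a birational automorphism'' are where real work hides, and neither is routine. A modern and cleaner route, should you want one, is the treatment in Bosch--L\"utkebohmert--Raynaud, \emph{N\'eron Models}, \S5.1 (reference [7] in the paper), which avoids some of the pointwise gluing by working with the graph of $m$ directly.
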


\begin{definition}
    (Generalised Jacobian) For \(X\) and \(\md\), the generalised Jacobian \(\gj\) is defined to be the unique algebraic group corresponding to the birational group law \(m\) in \Cref{thmrgl}. The birational map \(X^{(\pi)}\dashrightarrow \gj\) is denoted \(\varphi\).
\end{definition}

\begin{remark}
    Although the Jacobian variety of a curve (\(\gj\) where \(\md=0\)) is projective, and thus an abelian variety, the generalised Jacobian (the case of a nonzero modulus) is only quasi-projective.
\end{remark}

\section{Canonical Map}

Our first goal is to define a homomorphism from the group of divisors on \(X\) prime to \(S\) to \(\gj\). The following analogue of \Cref{compositionlemma} will be useful.

\begin{lemma}\label{precompositionlemma}
    Let \(D\) be a divisor on \(X\) of degree 0 and prime to \(S\). There exists an open set \(U\) in \((X-S)^{(\pi)}\) such that for all field extensions \(K|k\) and all \((D')\) in \(U(K)\), there exists a unique effective divisor \(\md\)-equivalent to \(D+D'\).
    
\end{lemma}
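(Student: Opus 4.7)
The plan is to follow the template of \Cref{compositionlemma}(a), with the fixed degree-$0$ divisor $D$ now playing the role formerly held by an effective divisor of degree $-\pi$. By \Cref{lemmameq}, effective divisors $\md$-equivalent to $D + D'$ correspond bijectively to $\mathbf{P}(L_{\md}(D + D')) \setminus \mathbf{P}(L(D + D' - \md))$, so uniqueness of such a divisor is equivalent to the pair of conditions
\begin{enumerate}
    \item[(1)] $l_{\md}(D + D') = 1$, and
    \item[(2)] $l(D + D' - \md) = 0$ \ (suppressed if $\md = 0$).
\end{enumerate}
Both define open loci $V_{1}, V_{2} \subseteq (X - S)^{(\pi)}$ by the semicontinuity theorem applied to $\lsheaf_{\md}(D + D_{\textrm{can}})$ and $\lsheaf(D + D_{\textrm{can}} - \md)$ on $X \times (X - S)^{(\pi)}$, and the desired open set is $U = V_{1} \cap V_{2}$; the real content is exhibiting a single $D'$ in each.

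For (1): since $\deg(D + D') = \pi$, the singular Riemann--Roch theorem gives $l_{\md}(D + D') \geq 1$, so $V_{1}$ is the locus where $i_{\md}(D + D') = 0$. First I would prove the bound $l_{\md}(D) \leq 1$: given linearly independent $f_{1}, f_{2} \in L_{\md}(D)$ with $f_{i} \equiv a_{i} \pmod{\md}$, the combination $g \defeq a_{2} f_{1} - a_{1} f_{2}$ (with a separate, easier treatment when $a_{1} = a_{2} = 0$) would be a nonzero rational function with $g \equiv 0 \pmod{\md}$ and $(g) + D \geq 0$ on $X - S$, forcing $\deg(g) \geq \deg(\md) > 0$, contradicting $\deg(g) = 0$ (the case $\md = 0$ reduces to the classical fact). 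Consequently $i_{\md}(D) = l_{\md}(D) + \pi - 1 \leq \pi$, and iterating \Cref{minus1} produces $D' = P_{1} + \cdots + P_{\pi}$ with $i_{\md}(D + D') = 0$: the first $i_{\md}(D)$ generic additions strictly decrease $i_{\md}$ by $1$, and further additions preserve $i_{\md} = 0$ via the monotonicity $\Omega_{\md}(E + Q) \subseteq \Omega_{\md}(E)$ used in the proof of \Cref{minus1}.

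For (2): $\deg(D + D' - \md) = g - 1$, so the nonsingular Riemann--Roch theorem gives $l(D + D' - \md) = i(D + D' - \md)$, and $V_{2}$ is the locus where this common value vanishes. Since $\deg(D - \md) < 0$ we have $l(D - \md) = 0$, hence $i(D - \md) = \pi$, and $\pi$ iterated applications of \Cref{minus1} with $\md = 0$ exhibit a $D' \in (X - S)^{(\pi)}$ with $i(D + D' - \md) = 0$. The main obstacle will be the bound $l_{\md}(D) \leq 1$ in (1): this is the singular analogue of a standard degree-$0$ fact, and is the only step where the modulus enters in an essential way beyond the pattern already used in \Cref{compositionlemma}.
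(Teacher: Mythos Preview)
Your proof is correct and follows essentially the same approach as the paper: reduce via \Cref{lemmameq} to the two conditions $l_{\md}(D+D')=1$ and $l(D+D'-\md)=0$, then establish each by iterating \Cref{minus1} starting from the degree-$0$ divisor $D$ (respectively $D-\md$). The only differences are cosmetic---you supply an explicit argument for $l_{\md}(D)\leq 1$ where the paper simply asserts it from $\deg(D)=0$, and for condition~(2) you argue directly (compute $i(D-\md)=\pi$ and drive it to zero) in place of the paper's argument by contradiction.
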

\begin{proof}
    By \Cref{lemmameq}, we need to find an open set \(U\) in \((X-S)^{(\pi)}\) such that for all field extensions \(K|k\) and all \((D')\) in \(U(K)\):
    \begin{enumerate}
        \item[(1)]\(l_{\md}(D+D')=1\)
        \item[(2)]\(l(D+D'-\md)=0\) (suppressed iff \(\md= 0\)).
    \end{enumerate}
    We will find an open set for each condition, and then take their intersection.
    
    For (1), \(deg(D)=0\) gives \(l_{\md}(D)\leq 1\). So the singular Riemann-Roch theorem gives \(0<i_{\md}(D)\leq\pi\). Apply \Cref{minus1} \(\pi\) times, we get an open set \(V\) such that \(i_{\md}(D+\sum Q_{i})=0\) for all \(Q_{1},...,Q_{\pi}\in V(K)\). The singular Riemann-Roch theorem then shows that on \(V(K)\), \(l_{\md}(D+\sum Q_{i})=1\).
    
    For (2), the proof is identical to that of \Cref{compositionlemma}. Assume that \(\md\neq 0\). Suppose for the sake of contradiction that \(l(D+D'-\md)>0\). The degree of \(l(D+D'-\md)\) is \(g-1\), and the (nonsingular) Riemann-Roch theorem shows that \(i(D+D'-\md)=l(D+D'-\md)>0\). Applying \Cref{minus1} (with \(\md=0\)) \(\pi\) times, we obtain an open set \(V'\) of \((X-S)^{(2\pi)}\) such that for all $(D)\in V'(K)$ \(i(D+D'-\md)=i(D+D''-\md)-\pi\), for any \((D',D'')\in V'(K)\) where \(D'-D''\) is an effective divisor of degree \(\pi\). So \(i(D+D''-\md)>\pi\). But \(D+D''-\md\) has negative degree, so \(l(D+D''-\md)=0\), and the Riemann-Roch theorem gives \(i(D+D''-\md)=\pi\), a contradiction.
    
    Taking \(U=V\cap V'\), we get the required open set.
\end{proof}

Now let \(D\) be any divisor on \(X\) prime to \(S\). \(D-\textrm{deg}(D)P\) is then a divisor of degree \(0\) prime to \(S\). Applying \Cref{precompositionlemma}, we get an open set \(U\) in \((X-S)^{(\pi)}\) such that for all field extensions \(K|k\) and all \((M)\) in \(U(K)\), there is a unique effective divisor \(N\) such that \[N\sim_{\md}D-\textrm{deg}(D)P+M.\] For each \((M)\) in \(U(K)\), define \[\theta_{M} (D)=\varphi(N)-\varphi(M).\]

With what we know at this point, $\varphi(N)$ might not always be defined, but for now let's assume that it is defined (and it is in fact always defined). This issue will be dealt with after \Cref{continuous}.

\begin{lemma}\label{thetahom}
    For any divisors \(D=D'+D''\) on \(X\) prime to \(S\), and any \(M,M',M''\) such that \(\theta_{M}(D)\), \(\theta_{M'}(D')\), \(\theta_{M''}(D'')\) make sense, \(\theta_{M}(D)=\theta_{M'}(D')+\theta_{M''}(D'')\). Furthermore, \(\theta_{M}(D)\) does not depend on \(M\).
\end{lemma}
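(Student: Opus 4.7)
My plan is to leverage the fundamental property of the generalised Jacobian --- that $\varphi(m(A,B)) = \varphi(A) + \varphi(B)$ whenever the birational composition is defined --- together with the crucial intermediate fact that $\varphi$ factors through $\md$-equivalence on $X^{(\pi)}$: if $A, B \in X^{(\pi)}$ are effective, $\md$-equivalent, and both in the domain of $\varphi$, then $\varphi(A) = \varphi(B)$. I would establish this key fact by choosing a generic auxiliary $C \in X^{(\pi)}$ so that $m(A,C)$ and $m(B,C)$ are both defined; then $m(A,C) \sim_{\md} A + C - \pi P \sim_{\md} B + C - \pi P \sim_{\md} m(B,C)$, and birationality of $\varphi$ together with the fact that $m$ was constructed to quotient by $\md$-equivalence gives $\varphi(m(A,C)) = \varphi(m(B,C))$, whence $\varphi(A) = \varphi(B)$. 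Iterating this, any $\md$-equivalence between sums of effective degree-$\pi$ divisors translates to an additive identity in $\gj$ after breaking each sum into blocks of degree $\pi$ and applying $m$ repeatedly.

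For additivity, let $N, N', N''$ be the unique effective divisors of degree $\pi$ attached to $\theta_M(D), \theta_{M'}(D'), \theta_{M''}(D'')$. Adding the defining equivalences for $N'$ and $N''$, and using $D = D' + D''$ together with $\ddeg(D) = \ddeg(D') + \ddeg(D'')$, I obtain $N' + N'' \sim_{\md} D - \ddeg(D)P + M' + M''$. Combining with $N \sim_{\md} D - \ddeg(D)P + M$ and rearranging yields
\[
N + M' + M'' \sim_{\md} N' + N'' + M,
\]
both sides effective divisors of degree $3\pi$. Applying the extended key fact (split each side into three degree-$\pi$ blocks and compose via $m$) gives $\varphi(N) + \varphi(M') + \varphi(M'') = \varphi(N') + \varphi(N'') + \varphi(M)$ in $\gj$, which rearranges exactly to the required $\theta_M(D) = \theta_{M'}(D') + \theta_{M''}(D'')$.

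For independence from $M$, I first observe that $\theta_M(0) = 0$: when $D = 0$ the defining relation forces $N \sim_{\md} M$, and uniqueness of $N$ within its $\md$-equivalence class forces $N = M$, so $\varphi(N) - \varphi(M) = 0$. Given two base divisors $M_1, M_2$ both admissible for $D$, the decomposition $D = D + 0$ together with additivity then yields $\theta_{M_1}(D) = \theta_{M_2}(D) + \theta_{M_1}(0) = \theta_{M_2}(D)$.

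The main obstacle is the key intermediate fact --- that $\varphi$ is constant on $\md$-equivalence classes in $X^{(\pi)}$ --- which is subtle because $\varphi$ is only a birational map and so is not a priori defined at every point of such a class (which may even be positive-dimensional in special loci, by \Cref{lemmameq}). Making this precise requires careful selection of the auxiliary divisor $C$ so that all invoked compositions $m(\cdot,\cdot)$ lie in their domain of definition, and this is essentially the same issue that needs to be resolved in order to justify that $\varphi(N)$ is defined in the first place --- the caveat flagged in the paragraph immediately preceding the lemma.
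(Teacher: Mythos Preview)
Your proposal is correct and follows essentially the same route as the paper: rearrange the desired identity into an $\md$-equivalence $N+M'+M''\sim_{\md} N'+N''+M$ of effective degree-$3\pi$ divisors, translate this via the birational group law into an additive identity in $\gj$, and then deduce $\theta_{M}(0)=0$ and independence from $M$ by specialising to $D=D+0$. You are simply more explicit than the paper about the intermediate fact that $\varphi$ is constant on $\md$-equivalence classes (which the paper compresses into the parenthetical ``rearrange so each side is effective and then rearrange back'') and about the domain-of-definition caveat, which the paper likewise defers.
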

\begin{proof}
    Let \(N,N',N''\) be the corresponding effective divisors for \(D,D',D''\) and
    
    \(M,M',M''\). We need to show that \[\varphi(N)-\varphi(M)=\varphi(N')-\varphi(M')+\varphi(N'')-\varphi(M''),\] which is equivalent (this can be seen from rearranging the terms so that each side is an effective divisor, and then rearrange back) to \[N-M\sim_{\md}N'-M'+N''-M'',\] which follows from \[D-\textrm{deg}(D)P\sim_{\md}D'-\textrm{deg}(D')P+D''-\textrm{deg}(D'')P.\]
    
    Taking \(D=D'=D''=0\), we get \(\theta_{M}(0)=0\).
    
    Taking \(D=D',D''=0\), we get \(\theta_{M}(D)=\theta_{M'}(D)\).
\end{proof}

We will write \(\theta\) for \(\theta_{M}\), since there is no \(M\) dependence. \Cref{thetahom} shows that \(\theta\) is a homomorphism. From the construction of \(\theta\) it is also clear that for any \((D)\in (X-S)^{(\pi)}\) for which \(\varphi(D)\) is defined, \(\theta(D)=\varphi(D)\).

\begin{proposition}\label{gjdvs}
    $\theta$ as a map from the group of divisors on $X$ prime to $S$ is surjective, and its kernel consists exactly of the divisors $\md$-equivalent to an integer multiple of $P$.
\end{proposition}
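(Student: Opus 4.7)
The plan is to establish surjectivity and then the two inclusions of the kernel description, relying throughout on the birational nature of $\varphi\colon X^{(\pi)} \dashrightarrow \gj$ and on the observation recorded after \Cref{thetahom} that $\theta(D) = \varphi(D)$ whenever $D$ lies in the domain of $\varphi$.

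For surjectivity, I would let $V \subseteq \gj$ be the image of the domain of $\varphi$, which is open dense since $\varphi$ is birational. Given $a \in \gj$, the translate $a + V$ is also open dense in the irreducible variety $\gj$, so it meets $V$; this produces $N, M$ in the domain of $\varphi$ with $\varphi(N) - \varphi(M) = a$. Since $\theta$ is a homomorphism by \Cref{thetahom} and agrees with $\varphi$ wherever the latter is defined, $\theta(N - M) = \varphi(N) - \varphi(M) = a$.

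For the inclusion $\ker\theta \supseteq \{D : D \sim_{\md} nP \text{ for some } n\}$, since $\theta$ is a homomorphism it suffices to check $\theta(nP) = 0$ and $\theta((g)) = 0$ for every $g \equiv 1$ mod $\md$. Both are immediate from the construction: choosing $M$ generic so that $l_{\md}(M) = 1$ (possible by \Cref{lemmameq2}), the unique effective divisor $\md$-equivalent to $M$ is $M$ itself, and in either case ($D = nP$, or $D = (g)$ with $g$ itself witnessing $(g) + M \sim_{\md} M$) the defining relation $N \sim_{\md} D - \deg(D)P + M$ reduces to $N \sim_{\md} M$, forcing $N = M$ and hence $\theta(D) = \varphi(M) - \varphi(M) = 0$.

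For the reverse inclusion, suppose $\theta(D) = 0$; setting $n = \deg(D)$ and replacing $D$ by $D - nP$ reduces the problem to $\deg(D) = 0$, where the goal becomes $D \sim_{\md} 0$. The definition of $\theta$ then gives $\varphi(N) = \varphi(M)$ for generic $M$ in the domain of $\varphi$ and $N$ the unique effective divisor with $N \sim_{\md} D + M$; the crux is to upgrade this to $N \sim_{\md} M$. To this end I would pick a generic $E \in X^{(\pi)}$ such that both $m(N, E)$ and $m(M, E)$ are defined and lie in the open locus $W \subseteq X^{(\pi)}$ on which $\varphi$ restricts to an isomorphism onto its image; the group law then yields
\[ \varphi(m(N, E)) = \varphi(N) + \varphi(E) = \varphi(M) + \varphi(E) = \varphi(m(M, E)), \]
and injectivity of $\varphi|_W$ forces $m(N, E) = m(M, E)$. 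By the defining property of $m$ in \Cref{thmrgl}, this reads $N + E - \pi P \sim_{\md} M + E - \pi P$; cancellation yields $N \sim_{\md} M$, and combining with $N \sim_{\md} D + M$ gives $D \sim_{\md} 0$. The main obstacle is the genericity claim supporting this last step, namely that the $E$ for which $m(N, E), m(M, E)$ both land in $W$ form a dense open set; this is routine given that $\varphi$ and $m$ are birational, but it is the only point in the argument that goes beyond a direct unwinding of definitions.
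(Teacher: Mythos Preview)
Your proof is correct and follows the same skeleton as the paper's: surjectivity via density of the image of $\varphi$, and the kernel via the chain $\theta(D)=0 \Leftrightarrow \varphi(N)=\varphi(M) \Leftrightarrow N\sim_{\md}M \Leftrightarrow D\sim_{\md}\deg(D)P$. The paper's proof is two lines and simply asserts this chain of equivalences; you supply the argument for the one nontrivial implication $\varphi(N)=\varphi(M)\Rightarrow N\sim_{\md}M$ by translating with a generic $E$ via the birational law $m$ and using injectivity of $\varphi$ on the open set where it is an isomorphism, which is exactly the content the paper suppresses. Your genericity claim is indeed routine once one notes that $M\mapsto N$ is itself birational (it has the obvious rational inverse coming from \Cref{precompositionlemma} applied to $-D+\deg(D)P$), so $M$ can be chosen to force $N$ into any prescribed dense open set.
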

\begin{proof}
     The image of $\theta$ certainly contains the image of $\varphi$, so it is open (and thus dense) in $\gj$. So it is the entirety of $\gj$.
    
    For the kernel, $\theta(D)=0\Leftrightarrow \varphi(N)=\varphi(M)\Leftrightarrow M\sim_{\md}N\Leftrightarrow D\sim_{\md}\textrm{deg}(D)P$.
\end{proof}

\begin{lemma}\label{continuous}
    Let $D=P_{0}$ for some $P_{0}\in X-S$. Then in the above process, $M\mapsto N$ is a rational map on $X^{(\pi)}$.
\end{lemma}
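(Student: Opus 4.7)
The plan is to realise the pointwise assignment $M \mapsto N$ as a morphism of $k$-schemes on the open set $U \subseteq (X-S)^{(\pi)}$ furnished by \Cref{precompositionlemma} applied to the degree-zero divisor $P_0 - P$, by mimicking the Yoneda-lemma argument already carried out in the proof of \Cref{thmrgl}. Once this is done, viewing the resulting morphism $U \to X^{(\pi)}$ as a partial map on $X^{(\pi)}$ gives the required rational map.

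Concretely, for any integral $k$-scheme $T$ and any $M \in U(T)$ -- a relative effective divisor of degree $\pi$ on $(X-S) \times T / T$ whose fibres $M_t$ satisfy $l_{\md}(P_0 - P + M_t) = 1$ and $l(P_0 - P + M_t - \md) = 0$ (suppressed iff $\md = 0$) -- I would set
\[ \lsheaf \defeq \lsheaf_{\md}(M + P_0 \times T - P \times T) \]
on $X_{\md} \times T$, and let $q: X_{\md} \times T \to T$ be the projection. The first condition forces $\dim H^0(X_{\md}, \lsheaf_t) = 1$ on every fibre, so by the cohomology-and-base-change result ([3], 4.2d) invoked in the proof of \Cref{thmrgl}, $q_*\lsheaf$ is a line bundle on $T$. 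The canonical evaluation $q^*q_*\lsheaf \to \lsheaf$ tensored with $(q^*q_*\lsheaf)^{-1}$ then produces a canonical global section
\[ s: \sheaf_{X_{\md} \times T} \to \lsheaf \otimes (q^*q_*\lsheaf)^{-1} \]
whose zero locus is a relative effective divisor $N$ of degree $\pi$ on $X_{\md} \times T / T$. The second cohomological condition ensures fibrewise, exactly as in the proof of \Cref{lemmameq}, that the generator of $L_{\md}(P_0 - P + M_t)$ does not vanish at the singular point $Q \in X_{\md}$, so $N$ actually lies in $(X-S) \times T$ and thus defines a $T$-point of $X^{(\pi)}$.

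The assignment $M \mapsto N$ is functorial in $T$ by construction, so by the Yoneda lemma it is represented by a morphism of schemes $U \to X^{(\pi)}$, giving the sought rational map on $X^{(\pi)}$. The main technical point -- and the only genuine obstacle -- is the claim that $q_*\lsheaf$ is locally free of rank one; this is what bootstraps the pointwise bijection of \Cref{precompositionlemma} into an actual morphism of schemes. Everything else is a direct one-variable specialisation of the two-variable Yoneda argument already carried out in \Cref{thmrgl}.
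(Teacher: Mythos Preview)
Your argument is correct: you reproduce the Yoneda-lemma construction from the proof of \Cref{thmrgl} in the one-variable setting, with the fixed divisor $P_0-P$ playing the role of the auxiliary divisor. The paper takes a shorter route: it simply observes that the assignment $M\mapsto N$ is literally the map $M\mapsto m(M,\,P_0+(\pi-1)P)$, where $m$ is the rational composition law already built in \Cref{thmrgl}; the only thing to check is that the slice $\{M:(M,P_0+(\pi-1)P)\text{ lies in the domain of }m\}$ is nonempty, which follows from \Cref{precompositionlemma} together with \Cref{compositionremark}. So the paper reuses the existing two-variable construction by specialising one argument, while you rebuild the machinery from scratch. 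The paper's approach is shorter and avoids repeating the base-change argument; your approach is self-contained and arguably more transparent about \emph{why} the map is regular on $U$, at the cost of duplicating work already done.
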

\begin{proof}
    (This proof is not from [1], as there is no analogous statement of this lemma in [1]) The map $M\mapsto N$ is the map $M\mapsto m(M,P_{0}+(\pi-1)P)$, where $m$ is the rational composition law given by \Cref{thmrgl}.
    
    (There exists some $M$ such that $(M,P_{0}+(\pi-1)P)$ is in the domain of definition of $m$, because there exists some $M$ such that the corresponding unique $N$ exists, and then we can apply \Cref{compositionremark})
\end{proof}

Write $V$ for domain of definition of $\varphi$ in $X^{(\pi)}$. In the case where $D=P_{0}$ for some $P_{0}\in X-S$, we can intersect $U$ (this is the $U$ used in the procedure for defining $\theta_{M}(D)$, right before \Cref{thetahom}) with the preimage of $V$ under the map $M\mapsto N$, and we find that $\varphi(N)$ would indeed be defined.

Now with a general $D$, \Cref{thetahom} tells us that by taking linear combinations, $\theta(D)$ (and the associated $\phi(N)$) is always defined.

Next we cite an auxiliary lemma, whose proof can be found in [1], chapter 5, section 2, no 8.

\begin{lemma}\label{g1covering}
    Let $X,X'$ be projective nonsingular curves, and let $g:X\rightarrow X'$ be a separable covering of degree $n+1$. For every $P\in X$, the divisor $g^{-1}(g(P))$ is of the form $P+H_{P}$, where $H_{p}$ is an effective divisor of degree $n$. If $H_{P}$ is identified with a point of $X^{(n)}$, the map $P\mapsto H_{P}$ is a regular map from $X\rightarrow X^{(n)}$.
\end{lemma}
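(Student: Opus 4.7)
The plan is to construct the map $P \mapsto H_P$ via the universal property that, for a smooth projective curve $X$, the symmetric product $X^{(n)}$ represents the functor sending a $k$-scheme $T$ to the set of relative effective Cartier divisors of degree $n$ on $X\times T/T$. So it suffices to build such a relative effective Cartier divisor on $X\times X/X$ (first projection, say) whose fiber over $P\in X$ is exactly $H_P$.

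I would begin by forming $E\defeq (g\times g)^{-1}(\Delta_{X'})\subset X\times X$, which is the fiber product $X\times_{X'} X$ diagonally embedded in $X\times X$. Since $g$ is finite flat of degree $n+1$ (automatic between smooth projective curves), the first projection $E\to X$ is also finite flat of degree $n+1$, so $E$ is a relative effective Cartier divisor of degree $n+1$ whose fiber over $P$ is the scheme-theoretic fiber $g^{-1}(g(P))$. The next step is to check that the diagonal $\Delta_X\subset X\times X$, which is set-theoretically contained in $E$, is contained in $E$ as Cartier divisors, so that $E-\Delta_X$ is again effective. This is a local computation: near $(P,P)$, with $e=e_P$ and uniformizers $u$ at $g(P)$ on $X'$ and $v$ at $P$ on $X$ chosen so that $g^{*}u=v^{e}w$ for some unit $w\in\sheaf_{X,P}$, the ideal of $E$ is generated by $v_{1}^{e}w_{1}-v_{2}^{e}w_{2}$ (where $v_i,w_i$ denote the pullbacks via the $i$-th projection $X\times X\to X$), while $\Delta_X$ is cut out by $v_{1}-v_{2}$. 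The identity
\[
v_{1}^{e}w_{1}-v_{2}^{e}w_{2}=w_{2}(v_{1}-v_{2})(v_{1}^{e-1}+\cdots+v_{2}^{e-1})+v_{1}^{e}(w_{1}-w_{2}),
\]
combined with the fact that $w_{1}-w_{2}$ is itself divisible by $v_{1}-v_{2}$ (since $w$ admits a power series expansion in $v$), shows that $v_{1}-v_{2}$ divides the defining equation of $E$. The quotient yields an explicit effective local equation for $E-\Delta_X$, and $E-\Delta_X$ has relative degree $n$ on each fiber.

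Having produced the desired relative effective Cartier divisor of degree $n$ on $X\times X/X$, the universal property of $X^{(n)}$ yields a unique regular morphism $X\to X^{(n)}$ that by construction sends $P$ to $H_P$. The main obstacle in this plan is the local computation establishing $\Delta_X\leq E$ as Cartier divisors — the rest follows formally from the representability of the symmetric product and the finite flatness of $g$. Separability of $g$ enters only in guaranteeing that the usual relation $g^{*}u=v^{e}w$ with $e$ the ramification index governs the local structure of $E$ at every point.
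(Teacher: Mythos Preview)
The paper does not prove this lemma; it cites Serre [1], chapter 5, section 2, no 8, so there is no in-paper argument to compare against. Your proof is correct and gives a clean scheme-theoretic argument via the representability of $X^{(n)}$ as the Hilbert scheme of degree-$n$ effective divisors on $X$: forming $E=X\times_{X'}X\subset X\times X$ as a relative effective Cartier divisor of degree $n+1$ over $X$, checking $\Delta_X\leq E$ by the local divisibility computation, and reading off the classifying map for $E-\Delta_X$.

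Two minor points. First, after subtracting $\Delta_X$ you assert that $E-\Delta_X$ is a relative effective Cartier divisor, but you only verified that each fiber has degree $n$; you should also remark that it is flat over $X$. This follows because $E-\Delta_X$ is an effective Cartier divisor on the smooth surface $X\times X$, hence Cohen--Macaulay of pure dimension $1$, and a finite map from such a scheme to a smooth curve is automatically flat. Second, your closing sentence about separability is not quite right: the relation $g^{*}u=v^{e}w$ with $w$ a unit and $e=v_{P}(g^{*}u)$ holds for any nonconstant morphism of smooth curves, and your divisibility argument goes through unchanged in the inseparable case (indeed, in characteristic $p$ with $e=p$ one has $v_{1}^{p}-v_{2}^{p}=(v_{1}-v_{2})^{p}$, which is visibly divisible by $v_{1}-v_{2}$). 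Separability is not actually used in your proof; it is assumed in the paper only because the application in \Cref{thetaregular} needs the covering constructed there to be separable.
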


\begin{proposition}\label{thetaregular}
    \(\theta:X-S\rightarrow \gj\) is a regular map.
\end{proposition}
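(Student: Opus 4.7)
The plan is to exhibit $\theta$ locally as $P_0 \mapsto \varphi(P_0 + D_0) - \theta(D_0)$ for an auxiliary effective divisor $D_0 \in (X - S)^{(\pi - 1)}$ chosen so that $P_0 + D_0$ lies in the open domain of definition $V \subset X^{(\pi)}$ of $\varphi$. The two ingredients already in hand that force this identity are: the additivity $\theta(P_0 + D_0) = \theta(P_0) + \theta(D_0)$ from \Cref{thetahom}, and the remark immediately after \Cref{thetahom} that $\theta = \varphi$ on $V$. Together they yield $\theta(P_0) = \varphi(P_0 + D_0) - \theta(D_0)$ whenever $P_0 + D_0 \in V$.

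By openness of $V$ in $X^{(\pi)}$ and continuity of the regular map $P \mapsto P + D_0$, once this identity holds at $P_0$ it automatically holds on an open neighborhood $U$ of $P_0$ in $X - S$, on which it displays $\theta|_U$ as a composition of regular maps: the closed immersion $P \mapsto P + D_0$ into $V$, followed by $\varphi|_V : V \to \gj$, followed by translation by the constant $-\theta(D_0) \in \gj$. This proves $\theta$ is regular at $P_0$. The main task is thus to produce, for every $P_0 \in X - S$, some $D_0 \in (X - S)^{(\pi - 1)}$ with $P_0 + D_0 \in V$.

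For this I would invoke \Cref{g1covering}: by Riemann-Roch (for $\pi \geq 1$) there exists a nonconstant separable rational function $g$ on $X$ of degree $\pi$, and for a sufficiently generic such $g$ we have $g^{-1}(g(P_0)) = P_0 + H_{P_0}$ with $H_{P_0} \in (X - S)^{(\pi - 1)}$ and $P_0 \notin \operatorname{supp} H_{P_0}$. \Cref{g1covering} then produces a regular map $P \mapsto H_P$ on a neighborhood of $P_0$, and taking $D_0 := H_{P_0}$ gives the desired divisor.

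The hard part is ensuring, for every $P_0 \in X - S$, that some choice of $g$ makes $P_0 + H_{P_0}$ land in $V$. Since $V^c$ is a proper closed subvariety of $X^{(\pi)}$ of dimension at most $\pi - 1$, a dimension count against the positive-dimensional family of divisors $P_0 + H_{P_0}$ obtained as $g$ varies should resolve generic $P_0$; the exceptional $P_0$ (where the entire slice $\{P_0 + D_0 : D_0 \in X^{(\pi - 1)}\}$ would be an irreducible component of $V^c$) can be handled by translating $\varphi$ along the birational group law $m$ of \Cref{thmrgl}, i.e., replacing $\varphi$ by $D \mapsto \varphi(m(D, D_1^{-1}))$ for a suitable $D_1 \in V$, which shifts the domain of definition onto a different open set in $X^{(\pi)}$ and together with $V$ eventually covers every $P_0 \in X - S$.
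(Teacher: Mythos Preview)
Your basic identity $\theta(P_0) = \varphi(P_0 + D_0) - \theta(D_0)$ is correct, and for any $P_0$ admitting some $D_0 \in (X-S)^{(\pi-1)}$ with $P_0 + D_0 \in V$ it gives a clean proof of regularity at $P_0$. Your dimension argument that the ``bad'' $P_0$ (those for which no such $D_0$ exists) are at most finite is also correct: each bad $P_0$ forces the irreducible $(\pi-1)$-dimensional subvariety $\{P_0 + D : D \in X^{(\pi-1)}\}$ to be a component of $V^c$, and these subvarieties are distinct for distinct $P_0$.

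The gap is in your handling of the bad $P_0$. The translation idea cannot work: the rational map $D \mapsto \varphi(m(D, D_1^{-1}))$ differs from $\varphi$ by an additive constant in $\gj$, hence equals a translate of $\varphi$ \emph{as a rational map}, and therefore has exactly the same domain of definition $V$ (the domain of definition of a rational map is intrinsic to its equivalence class). No amount of composing with birational self-maps of $X^{(\pi)}$ and translating in $\gj$ enlarges $V$. You cannot rule out bad $P_0$ a priori; the fact that $V = (X-S)^{(\pi)}$ is deduced only \emph{after} this proposition. (Your invocation of \Cref{g1covering} with a degree-$\pi$ function $g$ is also tangled: setting $D_0 = H_{P_0}$ gives $P_0 + D_0 = g^{-1}(g(P_0))$, a single fiber of $g$, with no reason to lie in $V$; and since all fibers of a fixed $g$ are linearly equivalent, varying along them does not produce a family to which a dimension count against $V^c$ applies.)

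The paper avoids the bad-$P_0$ problem by a different construction. Rather than the map $P \mapsto P + D_0$, it builds for each $P_1 \in X-S$ an explicit separable covering $g : X \to \mathbf{P}^1_k$ of degree $\pi+1$, extracted from the $\md$-equivalence $N \sim_{\md} -P_1 + P + M$ with $M \in (X-S)^{(\pi)}$ chosen generically; the bulk of the proof is verifying that $(g)_\infty = P + M$ so that $g$ really has degree $\pi+1$. Then \Cref{g1covering} yields a regular map $Q \mapsto H_Q \in X^{(\pi)}$ with $H_{P_1} = N$, and one obtains $\theta(Q) = -\varphi(H_Q) + \varphi(M)$ near $P_1$. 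The crucial point is that $H_{P_1} = N$ lands in $V$ because $M$ was chosen generically (via \Cref{continuous} one intersects with the preimage of $V$ under $M \mapsto N$). Thus the genericity lives in the auxiliary parameter $M$, which can be moved freely, rather than in $P_1$---so no analogue of a ``bad point'' ever appears.
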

\begin{proof}
    The case $\pi=0$ is trivial, as $X^{(0)}$ (and so $\gj$) consists of a single point.
    
    Assume \(\pi\neq0\). Let \(P_{1}\in X-S\) be a point distinct from \(P\). By \Cref{precompositionlemma}, there exists an open set of \(M\in (X-S)^{(\pi)}\) such that there is a unique \(N\in (X-S)^{(\pi)}\) that is \(\md\)-equivalent to \(-P_{1}+P+M\). Fix such an \(M\) for now. By \Cref{compositionremark}, there is an open set $U$ containing $P_{1}$ such that all points $P_{i}$ in it, there is a unique $N_{i}$ $\md$-equivalent to $-P_{i}+P+M$.
    
    Let \(g\) be a rational function that witnesses such \(\md\)-equivalence, that is, \[(g)=N+P_{1}-P-M,g\equiv 1\textrm{ mod }\md.\] By the proof of \Cref{precompositionlemma}, and the fact that \(g\equiv 1\) mod \(\md\), such \(g\) is unique.
    
    Since \(N,P_{1},P,M\) are all effective divisors, we know that the divisor \((g)_{\infty}\) of poles of \(g\) is less than or equal to \(P+M\). We will show that \((g)_{\infty}=P+M\).
    
    We can assume that \(N-P\) is not effective, since we can intersect \(U\) with the complement of the preimage of $\pi P$ under the continuous (\Cref{continuous}) map \(M\mapsto N\). \(P_{1}-P\) is clearly also not effective. So, \(P\) is a pole of \(g\).
    
    We can in addition assume that \(M=\sum_{1}^{\pi}M_{i}\) for distinct \(M_{i}\) by further intersecting \(U\) by a suitable open set. We need to show that any \(M_{i}\) is a pole of \(g\). We will show that $M_{\pi}$ is a pole, and by analogy the other ones are poles as well.
    
    Assume that \(M_{\pi}\) is not a pole. We have $$g\in L_{\md}(P-P_{1}+\sum_{i<\pi}M_{i}),l_{\md}(P-P_{1}+\sum_{i<\pi}M_{i})\geq 1.$$ So by the singular Riemann-Roch theorem, $$i_{\md}(P-P_{1}+\sum_{i<\pi}M_{i})\geq1.$$
    
    By intersecting $U$ with the open set from \Cref{minus1} (taking $D=P-P_{1}$), we get that $$i_{\md}(P-P_{1})\geq\pi.$$ Again by the singular Riemann-Roch theorem, we get $$l_{\md}(P-P_{1})\geq1,$$ which gives a rational function $h$ such that $(h)=P-P_{1}$ and $h\equiv1$ mod $\md$. $(h)=P-P_{1}$ means that $h$ is an isomorphism between $X$ and $\mathbf{P}^{1}_{k}$, so the genus $g=1$. $h\equiv1$ mod $\md$ is only possible if $\textrm{deg}(\md)=1$, or if $\md=0$ (both of which imply $\pi=0$, which contradicts our assumption at the beginning of the proof). So, we have shown that $M_{\pi}$ is a pole, and so are all the other $M_{i}$. So, $(g)_{\infty}=P+M$, and the divisor of zeroes is $(g)_{0}=N+P_{1}$.
    
    So, $g$ is a covering map $X\rightarrow\mathbf{P}^{1}_{k}$ of degree $\pi+1$. Since $P$ is a simple pole, $g$ is not a $p$th-power, and so this covering is separable. Applying \Cref{g1covering}, we get a regular map $X\rightarrow X^{(\pi)}:Q\mapsto H_{Q}$.
    
    
    Let $P_{2}\in X-S\cup \{P,M_{1},...,M_{\pi}\}$, and $a=g(P_{2})$. We have that $P_{2}$ is not a pole of $g$, and $(g-a)\geq P_{2}-P-M$. So, by the proof of \Cref{precompositionlemma}, $g-a$ spans $L_{\md}(P-P_{2}+M)$, and $g-a$ is nonzero on $S$. Define \[g'=(g-a)/(1-a),\] which satisfies $g'\equiv1$ mod $\md$, and $(g')=g^{-1}(a)-(g)_{\infty}=P_{2}+H_{P_{2}}-P-M$.
    
    So, $H_{P_{2}}\sim_{\md}-P_{2}+P+M$. So, $H_{P_{2}}$ is exactly the $N$ in the construction of $\theta$ associated to $D=-P_{2}$ and $M$. So $$\theta(-P_{2})=\varphi(H_{P_{2}})-\varphi(M),$$ and so $$\theta(P_{2})=-\varphi(H_{P_{2}})+\varphi(M),$$ which is regular at $P_{1}$, because up to a translation of $\gj$, it is the composition of $X\rightarrow X^{(\pi)}:Q\mapsto H_{Q}$ and $-\varphi:X^{(\pi)}\dashrightarrow\gj$, both of which are regular where it matters.
    
    So, since $P_{1}$ was arbitrary, $\theta$ is indeed regular on $X-S$.
\end{proof}

Since $\varphi$ and $\theta$ coincide on wherever both are defined, we have that the domain of definition of $\varphi$ on $X^{(\pi)}$ is $(X-S)^{(\pi)}$, and so $\varphi$ and $\theta$ are identical as maps from $X^{(\pi)}$. Following the conventions of [1], we will write $\varphi_{\md}$ for $\theta$ and $\varphi$ in all situations (regardless of the domain).

\section{Universal Property}

\begin{theorem}
    Let $f:X\dashrightarrow G$ be a rational map from $X$ to a commutative group $G$ admitting a modulus $\md$. There exists a unique algebraic homomorphism $F:\gj\rightarrow G$ such that $f=F\circ \varphi_{\md}+f(P)$.
\end{theorem}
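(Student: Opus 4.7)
The plan is to first define $F$ as an abstract group homomorphism $\gj \to G$ using the surjection $\theta$ of \Cref{gjdvs}, then promote it to a morphism of varieties using the birational model $\varphi: X^{(\pi)} \dashrightarrow \gj$ together with the standard Weil-style fact that a rational homomorphism between algebraic groups is automatically regular.

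For the pointwise construction I would set
\[F(\theta(D)) \defeq f(D) - \ddeg(D)\, f(P)\]
for each divisor $D$ on $X$ prime to $S$, where $f(D) = \sum n_i f(P_i)$ for $D = \sum n_i P_i$. By \Cref{gjdvs}, $\theta$ is surjective, so this defines $F$ on all of $\gj$ once well-definedness is checked. If $\theta(D) = \theta(D')$ then by \Cref{gjdvs} we have $D - D' \sim_{\md} nP$ with $n = \ddeg(D) - \ddeg(D')$, so there exists $g \in k(X)^*$ with $g \equiv 1$ mod $\md$ and $(g) = D - D' - nP$. Since $\md$ is a modulus for $f$, $f((g)) = 0$, which rearranges exactly to $f(D) - \ddeg(D) f(P) = f(D') - \ddeg(D') f(P)$. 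Additivity of $F$ is immediate from the defining formula, and the specialization $D = Q$ at a single point $Q \in X - S$ yields $F(\varphi_{\md}(Q)) = f(Q) - f(P)$, which is the claimed identity. Uniqueness follows because the values of any such $F$ on $\varphi_{\md}(X - S)$ are forced, and these generate $\gj$ as a group by \Cref{gjdvs} (every divisor is a $\mathbf{Z}$-linear combination of points of $X - S$).

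The remaining and most substantial step is showing that $F$ is a morphism of algebraic groups, not merely a set-theoretic homomorphism. I would produce a rational model as follows: the symmetric regular map $(X - S)^{\pi} \to G$ sending $(P_1, \ldots, P_\pi) \mapsto \sum_i f(P_i) - \pi f(P)$ descends to a regular map $F': (X - S)^{(\pi)} \to G$. Since $\varphi: X^{(\pi)} \dashrightarrow \gj$ is birational by construction from the birational group law of \Cref{thmrgl}, composing $F'$ with a birational inverse of $\varphi$ produces a rational map $\tilde F: \gj \dashrightarrow G$ which agrees with the set-theoretic $F$ on the dense open locus where both are defined.

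The hard part is upgrading $\tilde F$ from rational to regular. The standard argument, which I would invoke here, is that a rational homomorphism between algebraic groups is automatically a morphism: given any $x \in \gj$, one can choose $a \in \gj$ with both $a$ and $x + a$ in the domain of definition $U$ of $\tilde F$ (such $a$ exists because $U \cap (U - x)$ is a nonempty open set), and then
\[F(x) \defeq \tilde F(x + a) - \tilde F(a)\]
gives a regular local definition of $F$ near $x$. The homomorphism property of $\tilde F$ on $U$ guarantees that these local definitions are independent of the choice of $a$ and glue coherently, producing the desired morphism $F: \gj \to G$ agreeing pointwise with the $F$ constructed in paragraph two.
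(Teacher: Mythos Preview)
Your proof is correct and follows essentially the same approach as the paper's: define $F$ set-theoretically via the divisor-class description of $\gj$ from \Cref{gjdvs}, then obtain regularity on a dense open by precomposing the symmetrized map $(X-S)^{(\pi)}\to G$ with a birational inverse of $\varphi$, and finally extend to all of $\gj$ by the translation trick. The paper's version is terser --- it absorbs the constant by translating $f$ so that $f(P)=0$, writes $F=f\circ\varphi_{\md}^{-1}$ on $U$, and dispatches the extension in one line (``by translation, $F$ is regular everywhere'') --- but the content is the same as what you have spelled out.
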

\begin{proof}
    After a translation on $f$, we may assume that $f(P)=0$. Let $D$ be a divisor on $X$ prime to $S$. Because $f$ admits $\md$ as a modulus, $f(D)$ only depends on the $\md$-equivalence class of $D$. Since $f(P)=0$, we get a (not yet shown to be algebraic) homomorphism $F:C_{\md}^{0}\rightarrow G$ such that $f=F\circ\varphi_{\md}$ (this holds for all divisors prime to $S$, so in particular this holds for any point in $X-S$).
    
    It remains to show that $F$ is regular. Taking the extension of $\varphi_{\md}$ to $X^{(\pi)}$, we have an inverse $\varphi_{\md}^{-1}$ to $\varphi_{\md}$ (since it is birational) with domain of definition $U$ in $\gj$. In this case, $F=f\circ \varphi_{\md}^{-1}$, which is regular on $U$. By translation, $F$ is regular everywhere.
\end{proof}

\section{Over Arbitrary Fields}

Here we quote the main result in [1], chapter 5, section 4, which allows us to drop the assumption that $X$ has a $k$-rational point outside $S$.

\begin{definition}
    A homogeneous space $H$ for a commutative algebraic group $G$ is a non-empty variety on which $G$ acts algebraically and transitively.
    
    A principal homogeneous space is a homogeneous space where the regular map $G\times H\rightarrow H\times H: (g,h)\mapsto (gh,h)$ is an isomorphism of varieties.
    
    Let $H'$ be a principal homogeneous space for another commutative algebraic group $G'$. A map of principal homogeneous spaces $h:H\rightarrow H'$ is called a morphism if there exists an algebraic homomorphism $h_{0}:G\rightarrow G'$ such that for every $x\in H,g\in G$, $h(x+g)=h(x)+h_{0}(g)$.
\end{definition}

Let $k$ be an arbitrary field, and all varieties below are defined over $k$. Let $H$ be a principal homogeneous space for a group $G$, $X$ (as always) a smooth, projective, geometrically connected curve, and let $f:X\dashrightarrow H$ be a rational map. Let $\md$ be a modulus on $X$.

\begin{definition}
    Let $D$ be a divisor on $X$ prime to $S$ with $\textrm{deg}(D)=0$. We can pair up the degree 1 summands $P_{1},...,P_{n}$ (with possible repeats) and degree -1 summands $Q_{1},...,Q_{n}$ (again, with possible repeats) of $D$, and take $g_{i}$ to be the unique element of $G$ such that $g_{i}Q_{i}=P_{i}$. Define $f(D)=\sum g_{i}$. We say that $f$ admits a modulus $\md$ iff for every $D\sim_{\md}0$, $f(D)=0$.
\end{definition}

\begin{theorem}\label{gjthm}
    There exists an algebraic group $\gj$ (such that when $k$ is perfect, $(\gj)_{\bar{k}}$ (as an algebraic group) is isomorphic to the generalised Jacobian $J_{\bar{\md}}$ associated to $X_{\bar{k}}$ and $\bar{\md}$), and a principal homogeneous space $\gj^{(1)}$ of $\gj$, and a rational map $$\varphi_{\md}:X\dashrightarrow \gj^{(1)}$$ satisfying the following universal property:
    
    Let $f:X\dashrightarrow H$ be a rational map from $X$ to a principal homogeneous space $H$ for a group $G$. If $f$ admits the modulus $\md$, then $f$ can be uniquely factored as $$f=\theta\circ\varphi_{\md}$$ where $\theta:\gj^{(1)}\rightarrow H$ is a morphism of principal homogeneous spaces. 
\end{theorem}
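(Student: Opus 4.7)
The plan is Galois descent from the algebraically closed case treated in the earlier sections. Assume first that $k$ is perfect, write $\Gamma = \mathrm{Gal}(\bar{k}|k)$, and let $\bar{\md}$ denote the base change of $\md$; since $\md$ is defined over $k$, $\bar{\md}$ is $\Gamma$-invariant. From the previous sections we already have the generalised Jacobian $J_{\bar{\md}}$ over $\bar{k}$, constructed via the birational composition law on $X_{\bar{k}}^{(\pi)}$ of \Cref{thmrgl}. Crucially, that law depends only on $\md$-equivalence of divisors, not on the choice of base point, so the $\Gamma$-action on $X_{\bar{k}}^{(\pi)}$ (which preserves $\md$-equivalence because $\bar{\md}$ is $\Gamma$-invariant) induces a $\Gamma$-action on $J_{\bar{\md}}$ by group automorphisms. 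Weil's descent theorem, cited just before the definition of $\gj$ in \Cref{thmrgl}, then produces an algebraic group $\gj$ over $k$ with $(\gj)_{\bar{k}} \cong J_{\bar{\md}}$ as Galois modules.

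Next I would construct $\gj^{(1)}$ and $\varphi_{\md}$ by packaging the earlier construction in a base-point-free form. Define $\gj^{(1)}$ functorially as the sheaf on $k$-schemes of $\md$-equivalence classes of (relative) effective divisors of degree $1$ prime to $S$. Over $\bar{k}$, fixing any $\bar{P} \in (X - S)(\bar{k})$ identifies $(\gj^{(1)})_{\bar{k}}$ with $J_{\bar{\md}}$ via $[D] \mapsto \varphi_{\bar{\md}}(D) - \varphi_{\bar{\md}}(\bar{P})$, which proves representability by a variety. The transported $\Gamma$-action is the twisted action $x \mapsto \gamma \cdot x + c_{\gamma}$ with cocycle $c_{\gamma} = \varphi_{\bar{\md}}(\gamma \bar{P}) - \varphi_{\bar{\md}}(\bar{P})$, so by descent $\gj^{(1)}$ is a $\gj$-torsor over $k$. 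The tautological rational map $\varphi_{\md} : X \dashrightarrow \gj^{(1)}$ sending a point to its own degree-$1$ $\md$-equivalence class is manifestly defined over $k$ and, after base change to $\bar{k}$, differs from $\varphi_{\bar{\md}}$ by the translation by $[\bar{P}]$.

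For the universal property, let $f : X \dashrightarrow H$ admit the modulus $\md$, with $H$ a principal homogeneous space for a connected commutative group $G$. After base change, $f_{\bar{k}}$ admits $\bar{\md}$, and fixing any $\bar{h}_{0} \in H(\bar{k})$ translates $H_{\bar{k}}$ to its underlying group $G_{\bar{k}}$, so that the universal property of the pointed case established in the previous section furnishes a unique algebraic homomorphism $F : J_{\bar{\md}} \rightarrow G_{\bar{k}}$ with $f_{\bar{k}} = F \circ \varphi_{\bar{\md}} + f_{\bar{k}}(\bar{P})$. Reassembled, this is a unique morphism of principal homogeneous spaces $\theta_{\bar{k}} : (\gj^{(1)})_{\bar{k}} \rightarrow H_{\bar{k}}$ with $f_{\bar{k}} = \theta_{\bar{k}} \circ \varphi_{\md, \bar{k}}$. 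By uniqueness $\theta_{\bar{k}}$ is automatically $\Gamma$-equivariant, and hence descends to the desired $\theta : \gj^{(1)} \rightarrow H$ over $k$. For imperfect $k$, rerun the argument over the perfect closure $k^{\mathrm{perf}}$ and use that every object in the statement is already defined over $k$.

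The main obstacle is the middle step. The earlier construction of $J_{\bar{\md}}$ was intrinsically pointed (a base point was needed to define $\varphi_{\bar{\md}}$), so without a $k$-rational point outside $S$ there is no direct way to descend $\varphi_{\bar{\md}}$; one must first rephrase the construction in terms of divisor-class functors so that the group $\gj$, the torsor $\gj^{(1)}$, and the canonical map $\varphi_{\md}$ all appear without reference to a base point, and then verify representability of the resulting torsor functor by descent. Once that reformulation is in place, the universal property descends essentially for free from uniqueness.
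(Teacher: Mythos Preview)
The paper does not actually prove this theorem; it explicitly quotes it from Serre [1], chapter~5, section~4, so there is no in-paper proof to compare against. Your Galois-descent strategy is essentially the one Serre uses there, and the outline you give---descend the group, realise $\gj^{(1)}$ as the degree-$1$ component of a divisor-class functor, then descend the universal property by uniqueness---is correct in spirit, and you have accurately identified the main technical obstacle (making the construction base-point-free before descending).

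Two small corrections. First, the theorem of Weil cited just before the definition of $\gj$ is the one turning a birational group law into an honest algebraic group, not Weil's Galois-descent theorem; you need the latter separately. Second, your assertion that the birational law on $X_{\bar k}^{(\pi)}$ ``depends only on $\md$-equivalence of divisors, not on the choice of base point'' is not literally true: the law $m(D,D')\sim_{\md} D+D'-\pi P$ of \Cref{thmrgl} visibly uses $P$, and a Galois element $\gamma$ carries it to the law based at $\gamma(P)$. What is true is that these two laws differ by a translation, so $\gamma$ acts on the underlying variety by an isomorphism of groups from the $P$-based structure to the $\gamma(P)$-based one, not by an automorphism of a single group. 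The clean fix is exactly the one you gesture at later: identify $J_{\bar\md}$ intrinsically with degree-$0$ $\bar\md$-equivalence classes of divisors prime to $\bar S$ (this is \Cref{gjdvs}), on which $\Gamma$ genuinely acts by group automorphisms since $\bar\md$ is $\Gamma$-stable; then the group descends, and the pointed construction survives only as a proof of representability over $\bar k$.
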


\chapter{Class Field Theory of Curves}

Global class field theory classifies abelian extensions of global fields, which in the case of global function fields, is equivalent to classifying connected coverings of smooth, projective, geometrically connected curves over finite fields.

We will work in slightly more generality, and will classify (possibly non-connected) coverings of smooth, projective, geometrically connected curves over perfect fields.

\section{Abelian Coverings of Varieties}

\begin{definition}

Let $k$ be a perfect field, $V$ be an irreducible $k$-varieties, and $W$ be a (possibly reducible) $k$-variety. We say that $\pi:W\rightarrow V$ is a galois covering if there is a finite group $\g$ acting on $W$ such that $W/\g\cong V$, and the projection map $W\rightarrow W/\g$ is the same as $\pi$, and that the group of automorphisms of $W$ that fix $V$ is $\g$. We say that the covering is abelian if $\g$ is abelian.

Write $n$ for the order of $\g$. We say that a point $P\in V$ is a ramification point (or that it is ramified in $W$) if $\pi^{-1}(P)$ contains repeated points. We say that $P\in V$ is unramified in $W$ if $\pi^{-1}(P)$ contains no repeated point.

\end{definition}

\begin{definition}
Let $H$ (resp. $H'$) be a homogeneous space for a connected commutative algebraic group $G$ (resp. $G'$). An algebraic homomorphism $f:G\rightarrow G'$ is called an isogeny if it is surjective and has finite kernel. A morphism of homogeneous spaces $h:H\rightarrow H'$ is called an isogeny if its corresponding homomorphism $h_{0}:G\rightarrow G'$ is an isogeny of connected commutative algebraic groups. Equivalently, $G\rightarrow G'$ (resp. $H\rightarrow H'$) is an isogeny if the map is surjective and $G,G'$ (resp. $H,H'$) have the same dimension. We say an isogeny is an abelian isogeny if it is also an abelian covering.
\end{definition}

\begin{theorem}\label{coverthm}
    Let $\pi:W\rightarrow V$ be an abelian covering with Galois group $\g$. There exists a rational map $f:V\dashrightarrow G$ and an abelian isogeny $G'\rightarrow G$ such that the following is a pullback diagram:
\[\begin{tikzcd}
	 W \arrow[r,dashed] \arrow[d]{}{\pi} & G'\arrow[d] \\
	 V \arrow[r,dashed]{}{f} & G
\end{tikzcd}\]
\end{theorem}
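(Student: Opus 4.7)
The plan is to construct the right-hand side of the diagram as a standard quotient isogeny and exhibit $\pi: W \to V$ as its pullback. Concretely, I would proceed in three steps: (1) embed the abstract finite abelian Galois group $\g$ as a closed subgroup of a connected commutative algebraic group $G'$, so that the quotient $G' \to G := G'/\g$ is tautologically an abelian isogeny with group $\g$; (2) construct a $\g$-equivariant rational map $\psi: W \dashrightarrow G'$, with $\g$ acting on $G'$ by translation through the embedding; (3) observe that the composition $V \dashrightarrow G = G'/\g$ obtained by quotienting by $\g$ is the desired $f$, and that the canonical map $W \to V \times_G G'$ is an isomorphism because it is a $\g$-equivariant morphism of $\g$-torsors over a dense open of $V$.

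For step (1), I would decompose $\g = \prod_i \mathbf{Z}/n_i\mathbf{Z}$ into cyclic factors. Each factor of order $n_i$ prime to $\textrm{char}(k)$ embeds into $\mathbf{G}_m$ via a primitive $n_i$-th root of unity (Kummer, with isogeny $x \mapsto x^{n_i}$); each $p^r$-cyclic factor with $p = \textrm{char}(k)$ embeds into the length-$r$ Witt vector group $W_r$ via the Artin-Schreier-Witt isogeny $F-1$. Taking products yields a connected commutative algebraic $\bar k$-group $G'$ in which $\g$ sits as a closed subgroup. To descend to the perfect field $k$, I would replace each Kummer factor by the Weil restriction $\textrm{Res}_{K/k}\mathbf{G}_{m,K}$ for $K = k(\mu_{n_i})$, inside which the constant $k$-group $\mathbf{Z}/n_i\mathbf{Z}$ embeds, and similarly arrange the Artin-Schreier-Witt factors to be defined over $k$.

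Step (2) is the heart of the proof. Since $k(V) \subset k(W)$ is Galois with group $\g$, Hilbert 90 gives $H^1(\g, \bar k(W)^*) = 0$, so every character $\chi: \g \to \bar k^*$ is realised: there exists a nonzero $\phi_\chi \in \bar k(W)$ with $\sigma \cdot \phi_\chi = \chi(\sigma)\phi_\chi$ for all $\sigma \in \g$. The analogous statement for the $p$-part follows from Artin-Schreier-Witt theory, producing additive semi-invariants $\psi_\chi$ satisfying $\sigma \cdot \psi_\chi = \psi_\chi + c_\chi(\sigma)$. Assembling these eigenfunctions coordinate-wise along the characters dual to the embedding $\g \hookrightarrow G'$ produces the required $\g$-equivariant rational map $\psi: W \dashrightarrow G'$.

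The main obstacle I anticipate is ensuring simultaneous $k$-rationality of the embedding $\g \hookrightarrow G'$ and of the equivariant map $\psi$, since the individual characters $\chi$ and eigenfunctions $\phi_\chi$ are only defined over $\bar k$. This is handled by grouping characters in a single $\textrm{Gal}(\bar k/k)$-orbit and packaging them through the Weil restrictions above, whose tautological $\bar k$-splittings recover the $\phi_\chi$ in a Galois-equivariant way. Once all the data is $k$-rational, the pullback property of the resulting square follows formally from both $W$ and $V \times_G G'$ being $\g$-torsors over $V$ related by a $\g$-equivariant map.
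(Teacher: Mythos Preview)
Your approach is correct but takes a different route from the paper's. The paper uses a single uniform construction: it takes $G'$ to be the unit group of the group algebra $\bar k[\g]$, embeds $\g$ via $g\mapsto g$, and builds the $\g$-equivariant map $W\dashrightarrow G'$ by sending $x$ to the element $\sum_{\sigma\in\g}\varphi^{\sigma}(x)\cdot\sigma$ for a single function $\varphi$ furnished by the normal basis theorem (the determinant $\det(\varphi^{\sigma\tau})$ is then generically nonzero, so the image lands in the units); descent to $k$ is automatic because the group algebra and $\varphi$ can be taken over $k$ from the start. In the tame case your construction is literally the same after diagonalising, since $\bar k[\g]\cong\prod_{\chi}\bar k$ and your eigenfunctions $\phi_{\chi}$ are the components of the paper's map in the character basis; in the wild case you substitute Witt vectors for the non-semisimple part of the group algebra, which is a genuinely different (smaller) receiving group. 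The paper's route buys uniformity and a trivial descent argument; yours buys an explicit identification of the isogeny with the standard Kummer and Artin--Schreier--Witt isogenies, at the cost of the Weil-restriction bookkeeping you already flag.
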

\begin{proof}
    First we work over the algebraic closure of $k$. Let $\bar{k}(\g)$ denote the group algebra of $\g$ over $\bar{k}$ (considered an affine space), $G'$ be the subvariety of $\bar{k}(\g)$ consisting of invertible elements, and $G$ be the quotient of $G'$ by $\g$.
    
    We need to find a rational map (over $\bar{f}$) $g:W\dashrightarrow G'$ such that \[\begin{tikzcd}
	 W_{\bar{k}} \arrow[r,dashed]{}{g} \arrow[d]{}{\pi} & G'\arrow[d] \\
	 V_{\bar{k}} \arrow[r,dashed]{}{f_{\bar{k}}} & G
    \end{tikzcd}\]
    is a pullback diagram, that is, it is a commutative diagram, and that $g$ commutes with the action of $\g$.
    
    Since we need $g$ to commute with the action of $\g$, $g$ is of the form $x\mapsto (\varphi^{s}(x))$ where $\varphi$ is a rational function on $W_{\bar{k}}$, and where $(\varphi^{s}(x))$ is the vector of the Galois orbit of $\varphi(x)$.
    
    Furthermore, we need $(\varphi^{s}(x))$ to be in $G'$, that is, they are invertible elements of $\bar{k}(\g)$. Since we are looking for a \textit{rational} map $g$, we only need a non-empty open set in $W$ such that $(\varphi^{s}(x))$ is invertible. Since invertibility here is an open condition, we only need to find some $x\in W$, and $\varphi\in \bar{k}(W)$ such that $(\varphi^{s}(x))$ is invertible. We will show that this is equivalent to $\textrm{det}(\varphi^{st}(x))\neq0$.
    
    Write the elements of $\g$ as $\g=\{1=\sigma_{1},\sigma_{2},...,\sigma_{n}\}$. $(\varphi^{s}(x))\in G'$ means that there exists $\alpha_{1},...,\alpha_{n}\in \bar{k}$ such that  $$(\varphi(x)+\varphi(x)\sigma_{2}+...+\varphi(x)\sigma_{n})(\alpha_{1}+\alpha_{2}\sigma_{2}+...+\alpha_{n}\sigma_{n})=1,$$ or equivalently, $(1,0,...,0)$ is in the span of $\{(\varphi^{s}(x)),(\varphi^{s\sigma_{2}}(x)),...,(\varphi^{s\sigma_{n}}(x))\}$, which is satisfied when $\textrm{det}(\varphi_{st}(x))\neq0$.
    
    When $\textrm{det}(\varphi^{st}(x))=0$, there exists $\beta_{1},...,\beta_{n}\in\bar{k}$ such that $$\beta_{1}(\varphi^{s}(x))+\beta_{2}(\varphi^{s\sigma_{2}}(x))+...+\beta_{n}(\varphi^{s\sigma_{n}}(x))=0,$$ or equivalently, $$(\varphi(x)+\varphi(x)\sigma_{2}+...+\varphi(x)\sigma_{n})(\beta_{1}+\beta_{2}\sigma_{2}+...+\beta_{n}\sigma_{n})=0,$$ and $(\varphi^{s}(x))\notin G'$.
    
    Now we show that there exists such $\varphi$. The normal basis theorem (which extends easily to the case of a Galois algebra over a field) for $k(W)|k(V)$ guarantees a function $\varphi'\in k(W)$ whose Galois orbit forms a basis for the Galois algebra $k(W)|k(V)$. After a base change, the Galois orbit of $\varphi_{\bar{k}}$ is a basis for $\bar{k}(W)|\bar{k}(V)$. Write $\varphi=\varphi'_{\bar{k}}$. The theory of discriminants gives that $\textrm{det}(\varphi^{st})\neq0\in\bar{k}(W)$ (this is the content of [8], chapter 3, proposition 9). In particular, there exists some $x\in W$ such that $\textrm{det}(\varphi^{st}(x))\neq0$. This proves the theorem in the case of an algebraically closed base field.
    
    $G'$ can be defined over $k$, as it is just the complement of an affine variety defined by the vanishing of a determinant. Taking the quotient by $\g$, we see that $G$ can also be defined over $k$. $g$ is also defined over $k$, as on each coordinate over $\bar{k}$, it is simply a regular function defined by coefficients in $k$. So, the theorem is true over $k$ as well.
\end{proof}

\begin{proposition}\label{urrgl}
    With the setting of \Cref{coverthm}, if $P\in V$ is unramified in $W$, then the map $f$ can be chosen to be regular at $P$.
\end{proposition}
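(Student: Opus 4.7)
The plan is to revisit the construction in \Cref{coverthm} and exploit the freedom in choosing the auxiliary function $\varphi \in \bar k(W)$: I would pick $\varphi$ so that, in addition to having a non-degenerate Galois orbit, it is regular on the geometric fibre $\pi^{-1}(P)$ with values making the matrix $(\varphi^{st}(Q_1))$ invertible at one (equivalently any) preimage $Q_1$ of $P$. This forces $g : W \dashrightarrow G'$, defined by $x \mapsto (\varphi^s(x))_s$, to be regular on all of $\pi^{-1}(P)$ with image in $G'$. The composite $W \xrightarrow{g} G' \to G$ is $\g$-invariant and regular on a $\g$-stable open neighbourhood of $\pi^{-1}(P)$, so by descent along $\pi = (W \to W/\g)$ the induced map $f$ is regular on an open set of $V$ containing $P$.

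Unpacking what this requires: since $P$ is unramified, $\pi^{-1}(P) = \{Q_1, \ldots, Q_n\}$ consists of $n = |\g|$ distinct points on which $\g$ acts freely and transitively. Fixing $Q_1$ and indexing the fibre by $\g$ appropriately, one finds that $(\varphi^{st}(Q_1))_{s,t \in \g}$ has the structure of the classical group matrix $(a_{st})_{s,t}$ built out of the values $a_s \defeq \varphi(Q_s)$. Its determinant is a polynomial in the $a_s$'s that is not identically zero: for instance, the assignment $a_1 = 1$, $a_s = 0$ for $s \neq 1$ produces a permutation matrix, of determinant $\pm 1$. So one only needs to arrange the value tuple $(\varphi(Q_s))_s$ to lie in the open locus where this polynomial is non-zero.

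To produce such a $\varphi$, I would apply the Chinese Remainder Theorem to the semi-local ring $\sheaf_{W_{\bar k},\, \pi^{-1}(P)}$: the canonical surjection onto the product of residue fields at the $Q_i$'s lets me realise any prescribed value tuple by a function regular at the whole fibre. Choosing the tuple so that the group determinant is non-zero, the rational function $\det(\varphi^{st})$ on $W_{\bar k}$ is non-zero at $Q_1$, and hence on a dense open. This simultaneously supplies invertibility of $g$ at every $Q_i$ (by $\g$-equivariance) and linear independence of the Galois orbit of $\varphi$ over $\bar k(V)$ — exactly the ``normal basis'' input invoked in the original proof of \Cref{coverthm}, so the rest of that construction (of $G$, $G'$, and the pullback diagram) goes through unchanged.

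The main obstacle is descent from $\bar k$ back to $k$: the $Q_i$ need not be individually $k$-rational, even though $\pi^{-1}(P)$ is a Galois-stable $k$-subscheme. My approach is to work directly over $k$ by using that $\pi^{-1}(P)$ is étale over the residue field $k(P)$, so its structure sheaf is a finite étale $k(P)$-algebra $A$; the non-vanishing group determinant condition defines a non-empty Zariski-open subset of $\spec A$ (non-empty geometrically by the construction above, and hence over $k$ as well, after a finite extension if necessary). A $k$-rational $\varphi$ regular at $\pi^{-1}(P)$ realising such a tuple then exists, and the descent argument from the last paragraph of \Cref{coverthm}'s proof applies verbatim to yield a $k$-rational $f$ regular at $P$.
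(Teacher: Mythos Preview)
Your argument over $\bar k$ is sound and is essentially an unpacked version of the paper's: both seek a $\varphi$ regular on the fibre $\pi^{-1}(P)$ whose values there make the matrix $(\varphi^{st})$ invertible, which simultaneously gives $g(\pi^{-1}(P))\subset G'$ and linear independence of $\{\varphi^{s}\}$ over the base function field. The CRT-plus-group-determinant calculation is just an explicit form of what the paper packages as ``normal basis''.

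The gap is in your descent paragraph. First, the non-vanishing of the group determinant is a condition on an \emph{element} of the residue algebra $A=\sheaf'_{P}/\m_{P}\sheaf'_{P}$, so it cuts out an open subset of the affine space underlying $A$ (an $n$-dimensional $k(P)$-vector space), not of $\spec A$. Second, and more seriously, geometric non-emptiness of this open set does not by itself yield a $k(P)$-rational point when $k(P)$ is finite; your clause ``after a finite extension if necessary'' gives up precisely what you are trying to establish, and the final appeal to the last paragraph of \Cref{coverthm} only handles descent of $G,G',g$ once a $k$-rational $\varphi$ is already in hand. The paper avoids this entirely by observing that, since $P$ is unramified, $A$ is a Galois algebra over $k(P)$ with group $\g$, and applying the normal basis theorem for Galois algebras directly over $k(P)$: this produces $\lambda\in A$ whose $\g$-orbit is a $k(P)$-basis, and any lift $\varphi\in\sheaf'_{P}\subset k(W)$ is then $k$-rational, regular on the whole fibre, with $\det(\varphi^{st})$ a unit at $P$. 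No separate descent step is needed.
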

\begin{proof}
    By the proof of \Cref{coverthm}, we need to be show that it is possible to choose a $\varphi\in k(W)$ whose Galois orbit forms a basis for the extension $k(W)|k(V)$, and that $\varphi$ is regular at all $Q\in \pi^{-1}(P)$.
    
    Let $\sheaf_{P}$ be the local ring of $V$ at $P$, $\m_{P}$ its maximal ideal, and $\sheaf'_{P}$ its integral closure in $k(W)$. Write $k'(P)\defeq\sheaf'_{P}/\m_{P}\sheaf'_{P}$. Because $P$ is unramified, $k'(P)$ is a Galois algebra over $k(P)$ of degree $n$.
    
    Let $\lambda\in k'(P)$ whose Galois orbit forms a basis over $k(P)$ (which exists by the normal basis theorem). Let $\varphi\in\sheaf'_{P}$ be a representative of $\lambda$. $\varphi^{s}$ is a normal basis for $k(W)|k(V)$, and $\varphi$ is invertible in $k'(P)$, thus invertible in $\sheaf'_{P}$, that is, has no pole at any $Q\in\pi^{-1}(P)$.
\end{proof}

\section{Abelian Coverings of Curves}

Let $k$ be a perfect field. Let $X$ be a smooth, projective, geometrically connected curve, and $\pi:X'\rightarrow X$ an abelian covering with Galois group $\g$. Let $S\subset X$ be the finite set of ramification points of $\pi$. By the results of the previous section, we have a pullback diagram: 
\[\begin{tikzcd}
	 X' \arrow[r,dashed]{}{g} \arrow[d]{}{\pi} & G'\arrow[d] \\
	 X \arrow[r,dashed]{}{f} & G
\end{tikzcd}\]
which we can (by \Cref{urrgl}) represent by the following pullback diagram with regular maps:
\[\begin{tikzcd}
	 X'-\pi^{-1}(S) \arrow[r]{}{g} \arrow[d]{}{\pi} & G'\arrow[d] \\
	 X-S \arrow[r]{}{f} & G
\end{tikzcd}\]
where $f$ is regular outside $S$. By Theorem 1, there exists a modulus $\md$ on $X$ with support is $S$. By considering $G,G'$ as principal homogeneous spaces of themselves, and by \Cref{gjthm}, $f$ factors as follows:
\[\begin{tikzcd}
    X-S \arrow[r]{}{\varphi_{\md}} &\gj^{(1)} \arrow[r]{}{\theta} & G
\end{tikzcd}\]
where $\theta$ is a morphism of principal homogeneous spaces. By taking the pullback of $\theta:\gj^{(1)}\rightarrow G$ and $G'\rightarrow G$, we get an abelian isogeny $H\rightarrow \gj^{(1)}$ from a principal homogeneous space $H$ of some group. By a further pullback of $\varphi_{\md}:X-S\rightarrow \gj^{(1)}$ and $H\rightarrow \gj^{(1)}$, we get a commutative diagram of 2 pullback squares:
\[\begin{tikzcd}
    X'-\pi^{-1}(S) \arrow[r] \arrow[d]{}{\pi} & H \arrow[r] \arrow[d] & G'\arrow[d] \\
	X-S \arrow[r]{}{\varphi_{\md}} &\gj^{(1)}\arrow[r] & G.
\end{tikzcd}\]
In particular, we have proved the following:

\begin{theorem}\label{mainthm}

Any abelian covering $\pi:X'\rightarrow X$ is the pullback of an abelian isogeny onto $\gj^{(1)}$ for some $\md$ whose support $S$ is contained in the set of ramification points:
\[\begin{tikzcd}
	 X' \arrow[r,dashed] \arrow[d]{}{\pi} & H\arrow[d] \\
	 X \arrow[r,dashed]{}{\varphi_{\md}} & \gj^{(1)}.
\end{tikzcd}\]

\end{theorem}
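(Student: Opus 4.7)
The plan is to assemble the ingredients from the preceding sections: \Cref{coverthm} (which expresses any abelian covering as a pullback of an abelian isogeny), \Cref{urrgl} (regularity at unramified points), \Cref{theorem1} (existence of moduli), and \Cref{gjthm} (universal property of $\gj^{(1)}$). Indeed, the discussion just before the theorem already walks through the construction; what I would do is present that construction as the proof and then verify that the composite diagram really is a pullback.

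First I would apply \Cref{coverthm} to the abelian covering $\pi:X'\to X$ with Galois group $\g$, obtaining a rational map $f:X\dashrightarrow G$, an abelian isogeny $G'\to G$, and a compatible map $g:X'\dashrightarrow G'$ fitting into a pullback square. Then I would invoke \Cref{urrgl}: since all points of $X-S$ are unramified in $X'$, the map $f$ can be chosen regular on $X-S$, and correspondingly $g$ is regular on $X'-\pi^{-1}(S)$. This yields the regular pullback square shown in the excerpt over $X-S$.

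Next I would apply \Cref{theorem1} to $f:X-S\to G$ to produce a modulus $\md$ on $X$ whose support is $S$ and which $f$ admits. By \Cref{gjthm}, $f$ factors as $\theta\circ\varphi_{\md}$ for a unique morphism $\theta:\gj^{(1)}\to G$ of principal homogeneous spaces. Now form the fibre product $H\defeq \gj^{(1)}\times_{G}G'$. Because pulling back an isogeny along a morphism of principal homogeneous spaces preserves surjectivity, finiteness of fibres, and the Galois action of $\g$, the projection $H\to\gj^{(1)}$ is again an abelian isogeny (with Galois group a quotient of $\g$, in fact $\g$ itself since the right-hand square is a pullback). The standard pasting lemma for pullbacks then shows that the outer rectangle
\[\begin{tikzcd}
X'-\pi^{-1}(S) \arrow[r] \arrow[d]{}{\pi} & H \arrow[r] \arrow[d] & G'\arrow[d] \\
X-S \arrow[r]{}{\varphi_{\md}} &\gj^{(1)}\arrow[r]{}{\theta} & G
\end{tikzcd}\]
is a pullback, giving the desired square as rational maps between $X'$ and $X$ and between $H$ and $\gj^{(1)}$.

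The main obstacle, in my view, is the verification that $H\to\gj^{(1)}$ genuinely inherits the structure of an abelian covering (not merely an isogeny of algebraic groups). One must check that the $\g$-action on $G'$ lifts to a $\g$-action on $H$ making $\gj^{(1)}=H/\g$, and that the quotient map agrees with the projection coming from the fibre product; this is a standard but slightly delicate diagram chase using that $\g$ acts trivially on the base $\gj^{(1)}$ via $\theta$. Once this is in place, the identification of $\pi:X'\to X$ with the pullback on the complement $X-\pi^{-1}(S)$ determines it globally, since $X'$ is the normalisation of $X$ in the function field $k(X')$ and is therefore uniquely reconstructed from its restriction to any non-empty open subset of $X$.
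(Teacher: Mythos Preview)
Your proposal is correct and follows essentially the same route as the paper: the paper's proof is precisely the paragraph preceding the theorem, assembling \Cref{coverthm}, \Cref{urrgl}, \Cref{theorem1}, and \Cref{gjthm} in the same order, then forming $H=\gj^{(1)}\times_{G}G'$ and invoking the pasting of pullback squares. Your added remarks on why $H\to\gj^{(1)}$ inherits the $\g$-action and on recovering $X'$ from its restriction to $X-S$ via normalisation are useful elaborations that the paper leaves implicit, but they do not constitute a different approach.
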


\section{Uniqueness}

In this section and the next, we prove some uniqueness statements (\Cref{firstunique} and \Cref{conductor}), which finishes the classification of abelian coverings of a smooth, projective, geometrically connected curve.

It would be really nice if there is a bijection between \{Abelian isogenies onto $\gj^{(1)}|\md$ is a modulus on $X$\} and \{Abelian coverings of $X$\}. Unfortunately this is not true, because if a covering is a pullback of an abelian isogeny onto $\gj^{(1)}$, then the map $\varphi_{\md}$ also admits any moduli $\md'\geq\md$, hence factoring through them, giving an abelian isogeny onto $J_{\md'}$ which pulls back to whatever covering we started with, as depicted below, where both squares and the big rectangle are pullbacks:
\[\begin{tikzcd}
    X' \arrow[r,dashed] \arrow[d]{}{\pi} & H' \arrow[r] \arrow[d] & H\arrow[d] \\
	X \arrow[r,dashed]{}{\varphi_{\md'}} & J_{\md'}^{(1)}\arrow[r] & \gj^{(1)}.
\end{tikzcd}\]

Our first uniqueness statement will be working over a fixed modulus.

\begin{proposition}\label{firstunique}
    Let $\md$ be a modulus on $X$, and $\g$ be a finite abelian group. Write $\textrm{Cov}(X,\g)$ for the set of abelian coverings of $X$ with Galois group $\g$, and $\textrm{Ext}(\gj^{(1)},\g)$ for the set of abelian isogenies onto $\gj^{(1)}$ with Galois group $\g$. The map $\phi^{*}$ that takes an element of $\textrm{Ext}(\gj^{(1)},\g)$ to its pullback along $\varphi_{\md}$ in $\textrm{Cov}(X,\g)$ is an injection.
\end{proposition}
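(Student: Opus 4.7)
The plan is to use the universal property of $\varphi_\md$ to upgrade a section over $X$ of a certain ``Baer difference'' isogeny into a section over $\gj^{(1)}$, thereby splitting the corresponding extension.

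Let $\pi_i:H_i\to\gj^{(1)}$ ($i=1,2$) be two abelian isogenies with group $\g$ whose underlying groups are $G_i$, and suppose there is a $\g$-equivariant isomorphism $\phi:X_1'\to X_2'$ of the pullbacks over $X$, where $X_i'=X\times_{\gj^{(1)}}H_i$. I would first form $H_1\times_{\gj^{(1)}}H_2$, a principal homogeneous space for $G_1\times_{\gj}G_2$, and quotient by the diagonal copy $\g_\Delta\subset\g\times\g\subset G_1\times_{\gj}G_2$, producing a principal homogeneous space $H'$ with an abelian isogeny $\alpha:H'\to\gj^{(1)}$ of group $(\g\times\g)/\g_\Delta\cong\g$. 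A standard Baer-sum calculation identifies $\alpha$ with the Baer difference $[\pi_1]-[\pi_2]$, so that $\alpha$ splits as an extension if and only if $\pi_1\cong\pi_2$ as abelian isogenies.

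Using the pullback projections $f_i:X_i'\to H_i$, the combined map $(f_1,f_2\circ\phi):X_1'\to H_1\times_{\gj^{(1)}}H_2$ is $\g$-equivariant for the diagonal action, since each $f_i$ is $\g$-equivariant for the action of $\ker(G_i\to\gj)$ and $\phi$ intertwines the two $\g$-actions. It therefore descends to a morphism $s:X\to H'$ satisfying $\alpha\circ s=\varphi_\md$. Since $\alpha$ has finite kernel, an adaptation to principal homogeneous spaces of the finite-kernel lemma from Chapter 2 (proved in exactly the same way using \Cref{crit} after choosing a base point to identify $H'$ with its underlying group up to translation) shows that $s$ admits $\md$ as a modulus. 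By \Cref{gjthm}, $s$ then factors uniquely as $s=\tilde s\circ\varphi_\md$ for a morphism of principal homogeneous spaces $\tilde s:\gj^{(1)}\to H'$.

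Applying $\alpha$ to this factorization gives $(\alpha\circ\tilde s)\circ\varphi_\md=\varphi_\md=\mathrm{id}\circ\varphi_\md$, and the uniqueness part of \Cref{gjthm} (applied to $\varphi_\md$ itself, which trivially admits $\md$) forces $\alpha\circ\tilde s=\mathrm{id}_{\gj^{(1)}}$. Thus $\tilde s$ is a section of $\alpha$, so the extension underlying $\alpha$ splits, and therefore $\pi_1\cong\pi_2$. The main obstacle I anticipate is extending the finite-kernel lemma from maps into commutative algebraic groups to maps into principal homogeneous spaces; this is essentially a bookkeeping exercise using \Cref{crit} but should be written out carefully. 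A secondary point is verifying that the identifications of the Galois groups of the $X_i'/X$ with $\ker(G_i\to\gj)\cong\g$ are compatible with the hypothesis that $\phi$ is $\g$-equivariant, so that the descent to $H'$ is genuinely along the diagonal $\g_\Delta$.
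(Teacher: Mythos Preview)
Your argument is correct and follows essentially the same route as the paper's (sketch) proof: both impose the Baer-sum group structure on the two sides and then lift a section over $X$ of the (difference) isogeny to a splitting over $\gj^{(1)}$ via the universal property of $\varphi_{\md}$. You are in fact more careful than the paper in flagging that the section $s:X\to H'$ must be shown to admit $\md$ via an adaptation of the finite-kernel lemma to principal homogeneous spaces, a step the paper's sketch simply asserts without justification.
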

\begin{proof}
    (Sketch) Using the Baer sum operation, the set of abelian isogenies onto $\gj^{(1)}$ with Galois group $\g$ is an abelian group, which we call $\textrm{Ext}(\gj^{(1)},\g)$. The Baer sum operation also defines an abelian group structure on $\textrm{Cov}(X,\g)$, the set of abelian coverings of $X$ with Galois group $\g$. It can be easily checked $\phi^{*}$ is a homomorphism. We need to check that $\phi^{*}$ has trivial kernel.
    
    Let $G\rightarrow \gj^{(1)}$ be an abelian isogeny whose pullback is the trivial covering of X, that is, $X'$ is a coproduct of $n$ copies of $X$. Write $X'=X_{1}\cup...\cup X_{n}$. There are sections $s_{i}:X\rightarrow X'$ that sends $X$ to $X_{i}$, which also make the following diagram commutative:
    \[\begin{tikzcd}
	 X' \arrow[r,dashed] & H\arrow[d] \\
	 X \arrow[r,dashed] \arrow[u]{}{s} & \gj^{(1)}.
    \end{tikzcd}\]
    By composing $s_{i}$ and the top map, we get a map $X\rightarrow H$ that admits $\md$ as a modulus. Factoring this through $\gj^{(1)}$, we get a map $\theta_{i}:\gj^{(1)}\rightarrow H$ such that the composition of
    \[\begin{tikzcd}
    \gj^{(1)} \arrow[r]{}{\theta_{i}} & H \arrow[r] & \gj^{(1)}
    \end{tikzcd}\]
    is the identity. These sections show that $H=\gj^{(1)}\times\g$. Hence $\varphi^{*}$ has trivial kernel, which is what we wanted.
\end{proof}

\begin{corollary}
    The set of unramified coverings of $X$ is in bijection with the set of abelian isogenies onto $J$ (the Jacobian variety, which is $\gj$ when $\md=0$).
\end{corollary}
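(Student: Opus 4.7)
The plan is to combine \Cref{firstunique} with the main classification result \Cref{mainthm}, taking $\md = 0$ throughout. For each finite abelian group $\g$, \Cref{firstunique} applied with $\md = 0$ yields an injection $\textrm{Ext}(J^{(1)}, \g) \hookrightarrow \textrm{Cov}(X, \g)$ via pullback along $\varphi_0 : X \to J^{(1)}$. Taking the union over all finite abelian $\g$ gives injectivity of the pullback map from abelian isogenies onto $J^{(1)}$ to abelian coverings of $X$. (Isogenies onto the principal homogeneous space $J^{(1)}$ correspond canonically to isogenies onto the group $J$ itself, so ``isogenies onto $J$'' and ``isogenies onto $J^{(1)}$'' label the same objects.)

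Next I would verify that the image actually lies inside the subset of \emph{unramified} coverings. The key observation is that $\md = 0$ forces $S = \emptyset$, so $\varphi_0$ is regular on all of $X$. An abelian isogeny $H \to J^{(1)}$ is a finite separable morphism of smooth varieties which is a local isomorphism on tangent spaces (this is standard for isogenies of commutative algebraic groups), hence étale; consequently the pullback along $\varphi_0$ is étale over every point of $X$, in particular unramified.

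For surjectivity onto unramified abelian coverings, I would apply \Cref{mainthm} to a given unramified abelian covering $\pi: X' \to X$. The theorem exhibits $\pi$ as the pullback of an abelian isogeny onto $J_{\md}^{(1)}$ for some modulus $\md$ whose support is contained in the ramification locus of $\pi$; since that locus is empty, $\md = 0$, and $\pi$ is exhibited as a pullback of an abelian isogeny onto $J^{(1)} = J$. The main obstacle I anticipate is the étaleness step in checking well-definedness: the paper has not explicitly developed the structure theory of commutative algebraic groups far enough to quote ``isogenies are étale'' as a lemma, so this assertion would need to be justified (either by a direct dimension-count/tangent-space argument, or by appealing to the separability of abelian coverings as defined in the previous section).
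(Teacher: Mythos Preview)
Your argument is correct and follows the same two-step route as the paper: surjectivity onto unramified coverings via \Cref{mainthm} (empty ramification locus forces $\md=0$), and injectivity via \Cref{firstunique} with $\md=0$. You are in fact more careful than the paper, which skips your well-definedness check that pullbacks land among unramified coverings; one small correction there: do not justify \'etaleness by ``this is standard for isogenies of commutative algebraic groups'' (Frobenius is a counterexample in positive characteristic), but rather by the paper's definition of an \emph{abelian} isogeny as an abelian Galois covering, which is separable by construction and hence \'etale by translation-homogeneity.
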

\begin{proof}
    By \Cref{mainthm}, any unramified covering of $X$ can be obtained as the pullback of an abelian isogeny onto $J$. By \Cref{firstunique}, the coverings obtained this way are unique.
\end{proof}


    
    

\section{Uniqueness Continued: Conductors}

Our next and final goal is to prove that for every abelian covering of $X$, there exists a smallest modulus $\md$ (called the conductor of the covering) such that the covering is a pullback of an abelian isogeny onto $\gj^{(1)}$. For this purpose we need to study how generalised Jacobians relate to each other.

Once again we work over the algebraic closure. Assume $k=\bar{k}$ for now (so that we are guaranteed a point $P\in X-S$ for any finite set $S$). 

\begin{lemma}\label{reallastlemma}
    Let $\md\geq\md'$ be moduli on $X$. There exists a unique surjective morphism $F:\gj^{(1)}\rightarrow J_{\md'}^{(1)}$ such that $\varphi_{\md'}=F\circ\varphi_{\md}$ (whose kernel we call $H_{\md/\md'}$).
\end{lemma}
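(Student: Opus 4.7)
The plan is to produce $F$ directly from the universal property of $\gj^{(1)}$ in \Cref{gjthm}, and to deduce surjectivity from a second application of that universal property. For the construction of $F$, write $\md = \sum n_P P$ and $\md' = \sum n'_P P$ with $n_P \geq n'_P$ for all $P$. Any rational function $g$ satisfying $g \equiv 1$ mod $\md$ obeys $v_P(1-g) \geq n_P \geq n'_P$, hence $g \equiv 1$ mod $\md'$. Consequently $D \sim_{\md} 0$ implies $D \sim_{\md'} 0$, and therefore any rational map from $X$ to a principal homogeneous space admitting $\md'$ as a modulus automatically admits $\md$. In particular $\varphi_{\md'} \colon X \dashrightarrow J_{\md'}^{(1)}$ admits $\md$, so \Cref{gjthm} yields a unique morphism of principal homogeneous spaces $F \colon \gj^{(1)} \to J_{\md'}^{(1)}$ with $\varphi_{\md'} = F \circ \varphi_{\md}$; this settles both existence and uniqueness.

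For surjectivity, let $F_0 \colon \gj \to J_{\md'}$ denote the underlying algebraic group homomorphism of $F$. Since the image of a homomorphism of algebraic groups is a closed subgroup, $F_0(\gj)$ is closed in $J_{\md'}$, and so $H \defeq F(\gj^{(1)})$ is a closed sub-principal-homogeneous-space of $J_{\md'}^{(1)}$ (a torsor under $F_0(\gj)$). Then $\varphi_{\md'}$ factors through $H$ as a rational map $\psi \colon X \dashrightarrow H$ that still admits $\md'$. Applying the universal property of $\varphi_{\md'}$ to $\psi$ yields a morphism $G \colon J_{\md'}^{(1)} \to H$ with $\psi = G \circ \varphi_{\md'}$. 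Composing with the inclusion $i \colon H \hookrightarrow J_{\md'}^{(1)}$ gives $\varphi_{\md'} = i \circ \psi = (i \circ G) \circ \varphi_{\md'}$, and the identity of $J_{\md'}^{(1)}$ also satisfies this relation; the uniqueness clause in \Cref{gjthm} then forces $i \circ G = \mathrm{id}$, so $i$ is surjective and $H = J_{\md'}^{(1)}$.

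The main obstacle I expect is the bookkeeping in the surjectivity step: one must verify that the image of a morphism of principal homogeneous spaces is itself a principal homogeneous space (under the image of the underlying group homomorphism) so that \Cref{gjthm} can be invoked with $H$ as codomain. Once this is in place the argument is mechanical, and the construction of $F$ itself reduces immediately to the universal property.
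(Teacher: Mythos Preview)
Your argument is correct, and the existence/uniqueness half coincides with the paper's. For surjectivity, however, you take a genuinely different route. The paper works with the explicit birational model: fixing a base point $P$ outside both supports, it defines $s:X^{(\pi')}\to X^{(\pi)}$ by $M_{1}+\cdots+M_{\pi'}\mapsto M_{1}+\cdots+M_{\pi'}+(\pi-\pi')P$, which induces a rational section $s':J_{\md'}\dashrightarrow \gj$ with $F_{0}\circ s'=\mathrm{id}$, whence $F_{0}$ (and so $F$) is surjective. Your approach instead factors $\varphi_{\md'}$ through the closed image $H=F(\gj^{(1)})$ and re-applies the universal property of $J_{\md'}^{(1)}$ to produce a retraction $G$ with $i\circ G=\mathrm{id}$, forcing $H=J_{\md'}^{(1)}$.

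Both arguments are short. The paper's section is concrete and ties directly into the birational description $\gj\sim X^{(\pi)}$ that is used again immediately afterwards to analyse $H_{\md/\md'}$; it also makes visible why one wants the algebraically-closed hypothesis in force at this point (to have the base point $P$). Your argument is purely formal and never touches the construction of $\gj$: it would apply verbatim to any object satisfying the universal property of \Cref{gjthm}. The bookkeeping you flag is harmless here, since $\gj$ is connected, so $F_{0}(\gj)$ is a connected closed commutative subgroup and $H$ is a translate of it, hence a principal homogeneous space to which \Cref{gjthm} applies.
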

\begin{proof}
    The existence and uniqueness of $F$ is clear by \Cref{gjthm}. We need to check that $F$ is surjective with connected kernel. We will check that the corresponding algebraic homomorphism $F_{0}$ is surjective, which will suffice.
    
    Let $P\in X$ be a point outside the support of both moduli. Define $s:X^{(\pi')}\rightarrow X^{(\pi)}$ by \[M_{1}+...+M_{\pi'}\mapsto M_{1}+...+M_{\pi'}+(\pi-\pi')P.\] $s$ defines a rational map $s':J_{\md'}\rightarrow\gj$. Since $\gj$ (resp. $J_{\md'}$) is birationally isomorphic to $X^{(\pi)}$ (resp. $X^{(\pi')}$), and $s$ composed with the quotient map $X^{(\pi)}\rightarrow X^{(\pi')}$ is the identity, we have that $F\circ s'=1$.
    
    The section $s'$ shows the surjectivity of $F_{0}$, and thus the surjectivity of $F$.
\end{proof}

Next we study the structures of the groups $H_{\md/\md'}$. By \Cref{gjdvs}, $J_{\md}$ (resp. $J_{\md'}$) is isomorphic to the group of $\md$-isomorphism classes of divisors prime to the support $S$ (resp. $S'$) of $\md$ (resp. $\md'$) of degree $0$. So, if $\md'\neq0$, then $H_{\md/\md'}$ is isomorphic to the group of degree 0 $\md$-equivalence classes of divisors that are $\md'$-equivalent to $0$, that is, the group of $\md$-equivalence classes of divisors that are of the form $(g)$ for some rational function $g$ satisfying $v_{P}(1-g)\geq n'_{P}$ (here we write $\md'=\sum n'_{P}P$), and if $\md'=0$, then $H_{\md/\md'}$ isomorphic to the group of $\md$-equivalence classes of divisors of the form $(g)$ where $g$ is a non-constant rational function.

\begin{lemma}\label{realreallastlemma}
    Let $\md'$ and $\m''$ be 2 moduli such that the abelian covering $\pi:X'\rightarrow X$ is the pullback of an abelian isogeny onto $J_{\md'}$ and an abelian isogeny onto $J_{\md''}$. Then $Y$ is the pullback of an abelian isogeny of $\gj$ where $\md=\textrm{Inf}(\md',\md'')$.
\end{lemma}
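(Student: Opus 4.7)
The plan is to interpose the modulus $\md^* \defeq \textrm{Sup}(\md',\md'')$ between $\md$ and the two given moduli, and then perform descent. Since $\md^* \geq \md',\md''$, \Cref{reallastlemma} yields surjective morphisms $F': J_{\md^*}^{(1)} \to J_{\md'}^{(1)}$ and $F'': J_{\md^*}^{(1)} \to J_{\md''}^{(1)}$ compatible with the $\varphi$'s from $X$. I would pull back $H' \to J_{\md'}^{(1)}$ along $F'$ and $H'' \to J_{\md''}^{(1)}$ along $F''$ to get abelian isogenies $\tilde H', \tilde H'' \to J_{\md^*}^{(1)}$, both of which pull back to $\pi$ via $\varphi_{\md^*}$. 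By \Cref{firstunique} applied at the modulus $\md^*$, these two agree in $\textrm{Ext}(J_{\md^*}^{(1)},\g)$; call the common isogeny $\mathcal{H}$. By construction, $\mathcal{H}$ descends to $J_{\md'}^{(1)}$ (as $H'$) and to $J_{\md''}^{(1)}$ (as $H''$).

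The next goal is to descend $\mathcal{H}$ further to $J_\md^{(1)}$, since pulling the resulting isogeny back along $\varphi_\md$ would then recover $\pi$. By the standard inflation-restriction sequence for $0 \to K \to J_{\md^*} \to J_\md \to 0$ (with $K = H_{\md^*/\md}$), descent is equivalent to triviality of the restriction $\mathcal{H}|_K$. I already have $\mathcal{H}|_{K'} = 0$ and $\mathcal{H}|_{K''} = 0$ where $K' = H_{\md^*/\md'}$ and $K'' = H_{\md^*/\md''}$, since these are exactly the descent conditions corresponding to the factorizations of $\mathcal H$ through $J_{\md'}^{(1)}$ and $J_{\md''}^{(1)}$.

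The key combinatorial step is to verify $K = K'+K''$ as subgroups of $J_{\md^*}$. By \Cref{gjdvs}, elements of $K$ correspond to $\md^*$-classes of divisors $(g)$ with $g \equiv 1$ mod $\md$, and it suffices to factor any such $g$ as $g'g''$ with $g' \equiv 1$ mod $\md'$ and $g'' \equiv 1$ mod $\md''$, modulo a function $\equiv 1$ mod $\md^*$. I would construct $g'$ using the approximation theorem by imposing $v_P(g'-g) \geq n''_P$ at points where $n'_P \leq n''_P$ and $v_P(g'-1) \geq n'_P$ at points where $n'_P > n''_P$; a direct valuation computation then verifies that both $g'$ and $g''=g/g'$ satisfy the required congruences at each $P$.

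Finally, given $K = K'+K''$, the conclusion $\mathcal{H}|_K = 0$ follows from an Ext computation: the surjection $\sigma: K' \times K'' \to K$, $(a,b) \mapsto a+b$, pulls $[\mathcal{H}|_K]$ back to $p_1^*[\mathcal{H}|_{K'}]+p_2^*[\mathcal{H}|_{K''}]=0$, and the exact sequence $\textrm{Hom}(K' \cap K'',\g) \to \textrm{Ext}^1(K,\g) \to \textrm{Ext}^1(K' \times K'',\g)$ then forces $[\mathcal{H}|_K]$ to be the image of something in $\textrm{Hom}(K' \cap K'',\g)$. The main obstacle is therefore this last vanishing, which reduces to showing $K' \cap K''$ is connected; I would resolve this by a direct inspection via the divisor-class description, exhibiting $K'\cap K''$ as the image under a homomorphism of a connected group parametrizing the "constants" that witness membership in both $K'$ and $K''$.
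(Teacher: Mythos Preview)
Your proposal is correct and takes essentially the same route as the paper: pass to $\md_1 = \mathrm{Sup}(\md',\md'')$, identify the two pulled-back isogenies over $J_{\md_1}$ via \Cref{firstunique}, and then descend to $J_\md$ by showing the extension is trivial over $K=H_{\md_1/\md}$ using $K=K'+K''$ together with the connectedness of $K'\cap K''$. The only cosmetic difference is that the paper builds the splitting over $K$ by hand---summing explicit sections $s':K'\to J_1$ and $s'':K''\to J_1$ and noting that the sum kills the kernel $Q\cong K'\cap K''$, which it identifies concretely as $0$ (when $\deg\md>0$) or $\mathbf{G}_m$ (when $\md=0$)---whereas you package the same step in the Ext long exact sequence; the two formulations are interchangeable.
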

\begin{proof}
    Suppose $\md,\md'>0$. Put $\md_{1}=\textrm{Sup}(\md',\md'')$. We have the following sequence of canonical homomorphisms provided by \Cref{reallastlemma}:
    \[\begin{tikzcd}
    & J_{\md'} \arrow[dr]\\
    J_{\md_{1}} \arrow[ur] \arrow[dr] && \gj \arrow[r] & J.\\
    & J_{\md''} \arrow[ur]
    \end{tikzcd}\]
    Write $H',H'',H,H_{1}$ for the kernels of the canonical homomorphisms from $J_{\md_{1}}$ to $J_{\md'},J_{\md''},J_{\md}, J$. Clearly $H'\subset H,H''\subset H,H\subset H_{1}$. By the structures of such kernels as discussed before, we have that $H',H''$ together generate $H$, and that the canonical map $H'\times H''\rightarrow H$ is an isomorphism of varieties (if $\textrm{deg}(\md)>0$), or $H$ is isomorphic to a quotient $H'\times H''/\mathbf{G}_{m}$ (if $\md=0$).
    
    Let $J'\rightarrow J_{\md'}$ and $J''\rightarrow J_{\md''}$ be abelian isogenies whose pullbacks are $\pi:X\rightarrow X$. Write $J_{1}',J_{1}''$ for the pullbacks of $J'\rightarrow J_{\md'}$ and $J''\rightarrow J_{\md''}$ along the canonical homomorphisms $J_{\md_{1}}\rightarrow J_{\md'}$ and $J_{\md_{1}}\rightarrow J_{\md''}$, as depicted below (for the case of $\md'$):
    \[\begin{tikzcd}
    X' \arrow[r,dashed] \arrow[d]{}{\pi} & J_{1}' \arrow[r] \arrow[d] & J'\arrow[d] \\
	X \arrow[r,dashed]{}{\varphi_{\md_{1}}} & J_{\md_{1}} \arrow[r] & J_{\md'}.
    \end{tikzcd}\]
    By \Cref{firstunique}, we have that $J_{1}'\cong J_{1}''$, since they both have abelian isogenies onto $J_{\md_{1}}$ pulling back to $\pi:X'\rightarrow X$. We thus denote both of them by $J_{1}$.
    
    By mapping $H',H''$ to $J_{\md'},J_{\md''}$, we see that the abelian isogeny $J_{1}\rightarrow J_{\md_{1}}$ is trivial on $H'$ and $H''$. Let $s':H'\rightarrow J_{1}$ and $s'':H''\rightarrow J_{1}$ be sections that witness their triviality. Write $s:H'\times H'' \rightarrow J_{1}$ be the sum of $s'$ and $s''$. Let $Q$ be the kernel of $H'\times H''\rightarrow H$ (which is either 0 or $\mathbf{G}_{m}$). The commutative diagram
    \[\begin{tikzcd}
    && J_{1} \arrow[d]\\
    H'\times H'' \arrow[urr]{}{s} \arrow[r] & H \arrow[r] & J_{\md_{1}}
    \end{tikzcd}\]
    shows that $s$ maps $Q$ to the kernel $\g$ of the abelian isogeny $J_{1}\rightarrow J_{\md_{1}}$. Since $Q$ is connected (as it is either 0 or $\mathbf{G}_{m}$), $s(Q)=0$. So $s$ factors as the following commutative diagram shows:
    \[\begin{tikzcd}
    && J_{1} \arrow[d]\\
    H'\times H'' \arrow[urr]{}{s} \arrow[r] & H \arrow[ur]{}{t} \arrow[r] & J_{\md_{1}},
    \end{tikzcd}\]
    So the isogeny $J_{1}\rightarrow J_{\md_{1}}$ is also trivial.
    
    Then an Ext long exact sequence of commutative algebraic groups shows that the abelian isogeny $J_{1}\rightarrow J_{\md_{1}}$ is the pullback of an abelian isogeny $G\rightarrow \gj$.
\end{proof}

\begin{proposition}\label{conductor}
    Let $k$ be a perfect field. Let $\pi:X'\rightarrow X$ be a smooth, projective, geometrically connected curve. There exists a modulus $\md$ on $X$ for which there is an abelian isogeny onto $\gj^{(1)}$ that pulls back to $\pi$, and that for any other $\md'$ such that there is an abelian isogeny onto $J_{\md'}^{(1)}$ pulling back to $\pi$, $\md\leq\md'$.
    
    Furthermore, the support $S$ of $\md$ is the set of ramification points of $\pi$ in $X$.
\end{proposition}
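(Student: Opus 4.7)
The plan is to first work over $\bar{k}$ and build the smallest admissible modulus there, then descend to $k$. Call a modulus $\md$ on $X_{\bar{k}}$ \emph{admissible} for $\pi$ if $\pi_{\bar{k}}$ is the pullback of some abelian isogeny onto $J_{\md}^{(1)}$ along $\varphi_{\md}$. By \Cref{mainthm} at least one admissible modulus $\md_{0}$ exists. For any other admissible $\md$, the pairwise infimum $\inf(\md,\md_{0})$ is admissible by \Cref{realreallastlemma} and is bounded above by $\md_{0}$. The set of admissible moduli bounded above by $\md_{0}$ is finite (their supports lie in $\mathrm{supp}(\md_{0})$ and each coefficient is a non-negative integer at most the corresponding coefficient of $\md_{0}$), so iterating \Cref{realreallastlemma} over this finite collection shows that its infimum $\md^{*}$ is admissible; by construction $\md^{*}\leq\md$ for every admissible $\md$, so $\md^{*}$ is the smallest admissible modulus.

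The next step is to identify $\mathrm{supp}(\md^{*})$ with the ramification locus $R$ of $\pi$. For one direction, I claim $R\subseteq\mathrm{supp}(\md)$ for \emph{every} admissible $\md$: at a point $P\notin\mathrm{supp}(\md)$ the rational map $\varphi_{\md}$ is regular, and $\pi_{\bar{k}}$ is the pullback of an abelian isogeny $H\to J_{\md}^{(1)}$; such an isogeny is unramified, because the finite abelian deck group $\g$ sits inside the group acting principally homogeneously on $H$ and therefore acts freely, making $H\to H/\g=J_{\md}^{(1)}$ unramified. The pullback of an unramified map along a regular map is unramified, so $P$ must be unramified in $\pi$ and therefore $P\notin R$; in particular $R\subseteq\mathrm{supp}(\md^{*})$. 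Conversely, if $P\in\mathrm{supp}(\md^{*})$ were unramified, then \Cref{urrgl} would let me rechoose the representing map $f:X\dashrightarrow G$ of $\pi$ so that $f$ is regular at $P$, hence regular on $X-(\mathrm{supp}(\md^{*})\setminus\{P\})$. Applying Theorem 1 to this $f$ produces a modulus supported on $\mathrm{supp}(\md^{*})\setminus\{P\}$, and combining the universal property \Cref{gjthm} with the pullback construction of \Cref{mainthm} yields an admissible modulus strictly smaller than $\md^{*}$ at $P$, contradicting minimality. Hence $\mathrm{supp}(\md^{*})=R$.

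Finally, for the descent from $\bar{k}$ to the perfect base field $k$: since $\md^{*}$ is uniquely determined by $\pi_{\bar{k}}$, it is $\mathrm{Gal}(\bar{k}/k)$-invariant, and as $k$ is perfect it corresponds to a modulus $\md$ on $X$ over $k$. Minimality over $k$ follows by base-changing any competitor to $\bar{k}$ and invoking the $\bar{k}$ statement. The main obstacle I anticipate is confirming that admissibility itself descends: one needs to check that an abelian isogeny onto $\gj^{(1)}$ realising $\pi$ over $\bar{k}$ can in fact be defined over $k$. This should follow from the canonicity of the whole pullback diagram as a function of $\pi$ and $\md$, combined with the uniqueness clauses in \Cref{firstunique} and \Cref{gjthm}, which force the diagram to be $\mathrm{Gal}(\bar{k}/k)$-equivariant and hence to descend to $k$.
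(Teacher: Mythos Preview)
Your approach is essentially the same as the paper's: reduce to $\bar{k}$, use \Cref{realreallastlemma} to extract the minimum of the admissible moduli, identify its support with the ramification locus via \Cref{urrgl} in one direction and the ``pullback of an unramified isogeny along a regular map is unramified'' observation in the other, then descend by Galois invariance.

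Two small remarks. First, in your contrapositive for $\mathrm{supp}(\md^{*})\subseteq R$, the modulus you produce via \Cref{urrgl} and Theorem~1 is only guaranteed to have coefficient $0$ at $P$; it could be larger than $\md^{*}$ at other points, so it does not by itself contradict minimality. You should take its infimum with $\md^{*}$ (again by \Cref{realreallastlemma}) before invoking minimality. Second, your descent is actually slightly cleaner than the paper's: the paper passes to the least Galois-invariant modulus dominating $\md_{0}$, whereas you observe directly that uniqueness of $\md^{*}$ forces it to be Galois-fixed. Both work. The issue you flag about descending the isogeny itself is real and is glossed over in the paper too; your sketch via the uniqueness in \Cref{firstunique} and \Cref{gjthm} is the right way to handle it.
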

(Such $\md$ is called the conductor for the covering $\pi$)
\begin{proof}
    In the algebraically closed case, there is no difference between working with $(\gj)_{\bar{k}}$ and $(\gj^{(1)})_{\bar{k}}$. Repeatedly applying \Cref{realreallastlemma}, we obtain the smallest modulus $\md_{0}$. By \Cref{urrgl}, $S_{0}$ is contained in the set of ramification point of $\pi_{\bar{k}}$ in $X_{\bar{k}}$. On the other hand, the pullback of an abelian isogeny along a map regular at a point $P$ gives a covering that is unramified at $P$. So $S_{0}$ is equal to the set of ramification points of $\pi_{\bar{k}}$.
    
    Let $\bar{\md}$ be the modulus on $X_{\bar{k}}$ that is the smallest modulus $\geq\md_{0}$ such that $\bar{\md}$ is invariant under the Galois group $\mathrm{Gal}(\bar{k}|k)$. Then $\bar{\md}$ corresponds to a modulus $\md$ on $X$, which is exactly the modulus we want.
\end{proof}


\include{chapter1}
\include{chapter2}
\include{conclusions}

\appendix
\include{appendix1}
\include{appendix2}

\addcontentsline{toc}{chapter}{Bibliography}
\bibliography{refs}        

\begin{thebibliography}{}
\bibitem[1]{SE} Jean-Pierre Serre, \textit{Algebraic Groups and Class Fields}, Springer (1988)
\bibitem[2]{HA} Robin Hartshorne, \textit{Algebraic Geometry}, Springer (1977)
\bibitem[3]{JM1} James Milne, \textit{Abelian Varieties}, Author's website (2008)
\bibitem[4]{JM2} James Milne, \textit{Jacobian Varieties}, Author's website (2021)
\bibitem[5]{RO} Maxwell Rosenlicht, \textit{Generalized Jacobian Varieties}, Annals of Mathematics (1953) pp. 505-530
\bibitem[6]{AW} André Weil, \textit{On algebraic groups of transformations}, American Journal of Mathematics (1955) pp. 350-391
\bibitem[7]{NM} Siegfried Bosch, Werner Lütkebohmert, Michel Raynaud, \textit{Néron Models}, Springer (1990)
\bibitem[8]{SL} Serge Lang, \textit{Algebraic Number Theory}, Springer (1970)

\end{thebibliography}
\bibliographystyle{plain}  

\end{document}